\crefname{equation}{}{}
\setlist[itemize]{parsep=0pt}
\setlist[enumerate]{parsep=0pt}
\def\@algocf@pre@ruled{}
\def\@algocf@capt@ruled{}
\def\@algocf@post@ruled{}
\@date \else {\vskip3ex \centering\footnotesize\@date\par\vskip1ex}\fi
\else \@footnotetext{\@setdate}\fi}
\newtheorem*{rep@theorem}{\rep@title}
\newcommand{\newreptheorem}[2]{%
\newenvironment{rep#1}[1]{%
 \def\rep@title{#2 \ref{##1}}%
 \begin{rep@theorem}}%
 {\end{rep@theorem}}}
\newtheorem{theorem}{Theorem}[section] 
\newtheorem{proposition}[theorem]{Proposition}
\newtheorem{lemma}[theorem]{Lemma} 
\newtheorem{corollary}[theorem]{Corollary}
\theoremstyle{definition}
\newtheorem{definition}[theorem]{Definition}
\newtheorem{question}[theorem]{Question}
\newtheorem{example}[theorem]{Example}
\newtheorem{remark}[theorem]{Remark}
\newtheorem{observation}[theorem]{Observation}
\newcommand{\floor}[1]{\left\lfloor #1 \right\rfloor}
\newcommand{\ceil}[1]{\left\lceil #1 \right\rceil}
\newcommand{\abs}[1]{\left\lvert#1\right\rvert}
\newcommand{\paren}[1]{\left( #1 \right)}
\newcommand{\set}[1]{\left\{ #1 \right\}}
\newcommand{\wh}{\widehat}
\DeclareMathOperator{\diam}{diam}
\DeclareMathOperator{\supp}{supp}
\DeclareMathOperator{\adim}{adim}
\DeclareMathOperator{\bdim}{bdim}
\DeclareMathOperator*{\argmax}{\arg\!\max}
\DeclareMathOperator*{\argmin}{\arg\!\min}
\title{On the Broadcast Dimension of a Graph}
\author{Emily Zhang}
\begin{document}
\begin{abstract}
A function $f:V(G)\rightarrow \mathds{Z}^+ \cup \{0\}$ is a \textit{resolving broadcast} of a graph $G$ if, for any distinct $x,y\in V(G)$, there exists a vertex $z\in V(G)$ with $f(z)>0$ such that $\min\left\{d(x,z),f(z)+1\right\} \neq \min\left\{d(y,z),f(z)+1\right\}$. The \textit{broadcast dimension} of $G$ is the minimum of $\sum_{v\in V(G)}f(v)$ over all resolving broadcasts $f$ of $G$. 
The concept of broadcast dimension was introduced by Geneson and Yi as a variant of metric dimension and has applications in areas such as network discovery and robot navigation.

In this paper, we derive an asymptotically tight lower bound on the broadcast dimension of an acyclic graph in the number of vertices, and we show that a lower bound by Geneson and Yi on the broadcast dimension of a general graph in the adjacency dimension is asymptotically tight.
We also study the change in the broadcast dimension of a graph under a single edge deletion. We show that both the additive increase and decrease of the broadcast dimension of a graph under edge deletion is unbounded. Moreover, we show that under edge deletion, the broadcast dimension of any graph increases by a multiplicative factor of at most 3. These results fully answer three questions asked by Geneson and Yi. 
\end{abstract}

\maketitle

\section{Introduction}
Let $G=(V(G),E(G))$ be a finite, simple, and undirected graph of order $|V(G)|$. The distance $d_G(u,v)$ between two vertices $u,v\in V(G)$ is the length of the shortest path in the graph $G$ between $u$ and $v$ if they belong to the same connected component of $G$ and infinity otherwise. We omit the subscript $G$ if it is clear from the context. For a positive integer $k$ and vertices $u,v\in V(G)$, we define $d_k(u,v) := \min\{d(u,v),k+1\}$.
%A vertex $z\in V(G)$ \textit{resolves} a pair of vertices $x,y\in G$ if $d(x,z)\neq d(y,z)$.

A set $S \subseteq V(G)$ is a \textit{resolving set} of $G$ if, for any distinct $x,y\in V(G)$, there is a vertex $z\in S$ such that $d(x,z)\neq d(y,z)$. Intuitively, a resolving set of $G$ is a set of landmark vertices, such that each vertex in $V(G)$ is uniquely characterized by its distances to the landmarks.
%The \textit{metric dimension} $\dim(G)$ of $G$ is the minimum cardinality over all resolving sets of $G$. 
The \textit{metric dimension} $\dim(G)$ of $G$ is the cardinality of a smallest resolving set of $G$. 

Metric dimension was introduced by Slater \cite{slater1975leaves} in 1975, in connection with the problem of uniquely determining the location of an intruder in a network. Harary and Melter independently introduced the same concept in \cite{melter1976metric}. 
Metric dimension has since been heavily studied \cite{bailey2011base, caceres2007metric, chappell2008bounds, chartrand2003theory} and has applications in diverse areas such as chemistry \cite{chartrand2000resolvability}, pattern recognition and image processing \cite{melter1984metric}, and strategies for the Mastermind game \cite{chvatal1983mastermind}.
Khuller et al. \cite{khuller1996landmarks} considered robot navigation as another possible application of metric dimension. 
In that sense, a robot moving around in a space modeled by a graph can determine its distance to landmarks located at some of the vertices. The minimum number of landmarks required for the robot to uniquely determine its location on the graph is the metric dimension of the graph.

%In that sense, a robot moving around in a space modeled by a graph can determine its distances to landmarks located at some of the vertices. The minimum number of landmarks required for the robot to uniquely determine its location on the graph is the metric dimension of the graph. 

%For example, a path has metric dimension 1, a cycle has metric dimension 2, and a complete graph has metric dimension $n-1$.

%\textbf{Dominating Set.}
%A set $D\subseteq V(G)$ is called a \textit{distance-k dominating set} of $G$ if for each $u\in V(G) \backslash D$, $\min \{d(x,y):y\in D\} = d(u,D)\leq k$. The \textit{distance-k domination number} of $G$ (denoted $\gamma_k(G)$) is the minimum size of a distance-$k$ dominating set of $G$. A function $f: V(G) \rightarrow \{0,1,2,\dots,\diam(G)\}$ is a \textit{dominating broadcast} of $G$ if, for every vertex $x\in V(G)$, there exists a vertex $y\in V(G)$ such that $f(y)>0$ and $d(x,y)\leq f(y)$. The \textit{broadcast (domination) number} (denoted $\gamma_b(G)$) is the minimum of $D_f(G) := \sum_{v\in V(G)} f(v)$ over all dominating broadcasts $f$ of $G$. NP-hard to determine domination number.

A set $A \subseteq V(G)$ is an \textit{adjacency resolving set} of $G$ if, for any distinct $x,y\in V(G)$, there is a vertex $z\in A$ such that $d_1(x,z) \neq d_1(y,z).$ The \textit{adjacency dimension} $\adim(G)$ of $G$ is the cardinality of a smallest adjacency revolving set of $G$. The concepts of adjacency resolving set and adjacency dimension were introduced by Jannesari and Omoomi \cite{jannesari2012metric} in 2012 as a tool for studying the metric dimension of lexicographic product graphs. 
The authors of \cite{jannesari2012metric} also considered robot navigation as a possible application of adjacency dimension: the minimum number of landmarks required for a robot moving from node to node on a graph to determine its location from only the landmarks adjacent to it is the adjacency dimension of the graph. 

A function $f:V(G)\rightarrow \mathds{Z}^+ \cup \{0\}$ is a \textit{resolving broadcast} of $G$ if, for any distinct $x,y\in V(G)$, there is a vertex $z\in \supp_G(f):= \{v\in V(G):f(v)>0\}$ such that $d_{f(z)}(x,z) \neq d_{f(z)}(y,z)$. The \textit{broadcast dimension} $\bdim(G)$ of $G$ is the minimum of $c_f(G) := \sum_{v\in V(G)}f(v)$ over all resolving broadcasts $f$ of $G$. The concepts of resolving broadcast and broadcast dimension were introduced in 2020 by Geneson and Yi \cite{geneson2020broadcast}, who noted that broadcast dimension also has applications in robot navigation. In that sense, transmitters with varying range are located at some of the vertices of a graph.  A transmitter with range $k$ has cost $k$ for $k\in \mathds{Z}^+\cup \set{0}$. A robot moving around on the graph learns its distance to each transmitter that it is within range of and learns that it is out of range of the others. The minimum total cost of transmitters required for a robot to determine its location on the graph is the broadcast dimension.

%Geneson and Yi \cite{geneson2020broadcast} noted that if the codomain of $f$ were restricted to 1, then $\bdim(G) = \adim(G)$.

We say that a resolving set, adjacency resolving set, or resolving broadcast of $G$ is \textit{efficient} if it achieves $\dim(G)$, $\adim(G)$, or $\bdim(G)$, respectively. 

\begin{example}
The following tree $T$ has different metric, adjacency, and broadcast dimension.
\begin{figure}[h]
\centering
\begin{subfigure}{.33\textwidth}
  \centering
\includegraphics[scale=.5]{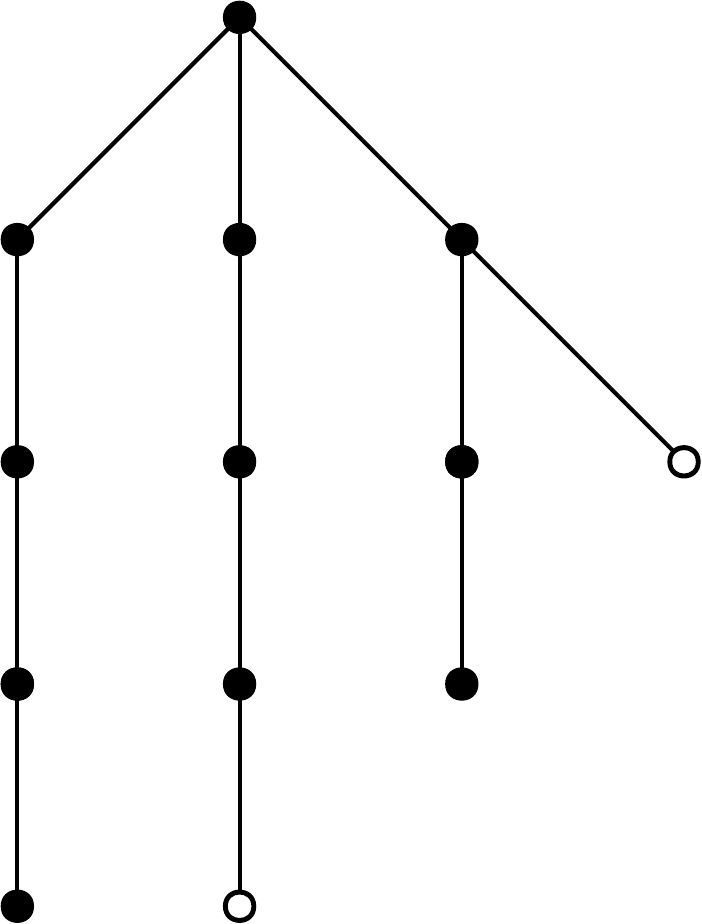}
\end{subfigure}%
\begin{subfigure}{.33\textwidth}
  \centering
\includegraphics[scale=.5]{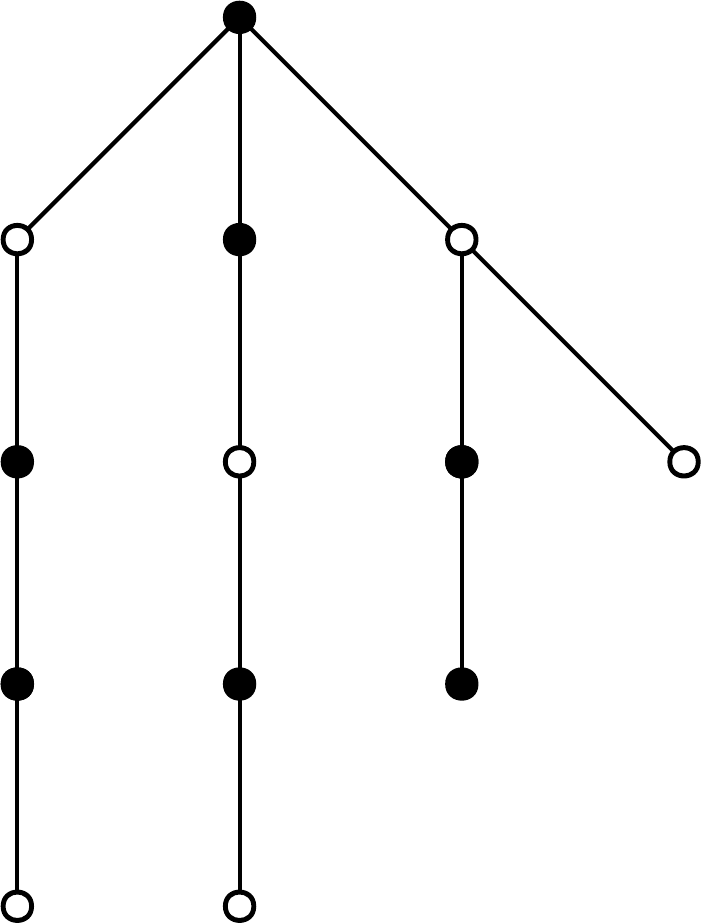}
\end{subfigure}
\begin{subfigure}{.33\textwidth}
  \centering
\includegraphics[scale=.5]{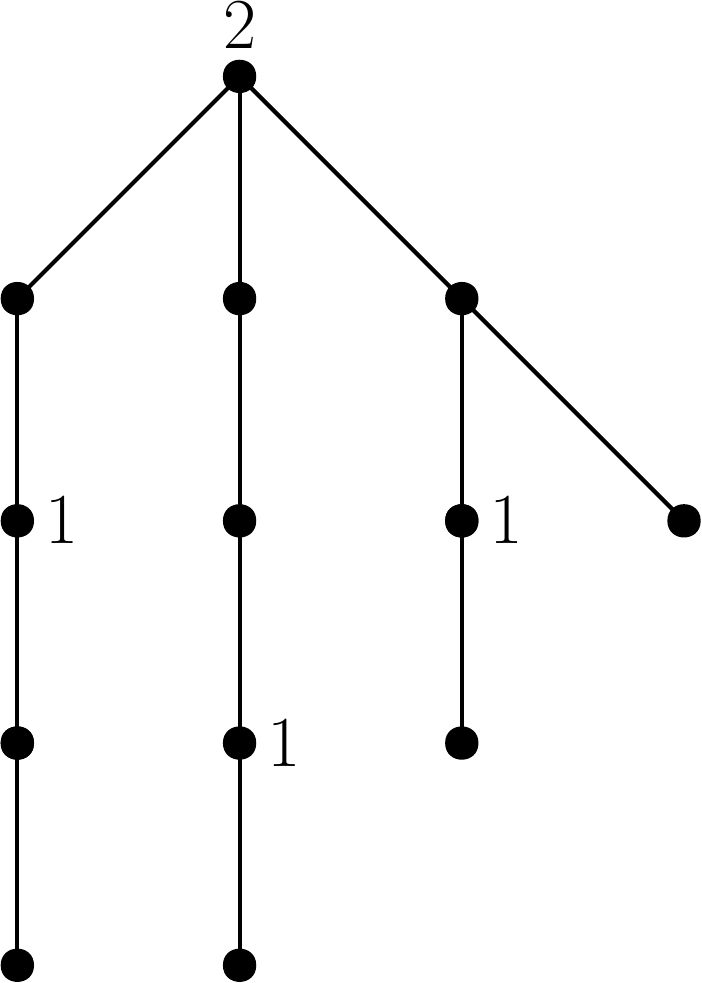}
\end{subfigure}

\begin{minipage}{.33\textwidth}
  \centering
  \vspace*{.5cm}
  $\dim(T) = 2$
\end{minipage}%
\begin{minipage}{.33\textwidth}
  \centering
    \vspace*{.5cm}
  $\adim(T) = 6$
\end{minipage}
\begin{minipage}{.33\textwidth}
  \centering
    \vspace*{.5cm}
  $\bdim(T) = 5$
\end{minipage}
\caption{Three copies of tree $T$. An efficient resolving set is shown with open circles in the first copy; an efficient adjacency resolving set is shown with open circles in the second copy; an efficient resolving broadcast is labeled on the third copy.}
 \label{figure: example}
\end{figure}
\end{example}

%\vspace{-.4cm}
%\subsection{Main Results and Outline}

In \cite{geneson2020broadcast}, Geneson and Yi proved an asymptotic lower bound of $\Omega(\log{n})$ on the adjacency and broadcast dimension of graphs of order $n$, and they further demonstrated that this lower bound is asymptotically tight using a family of graphs from \cite{zubrilina2018edge}. 

\begin{theorem} [\cite{geneson2020broadcast}]
\label{thm: lowerbound_adim_bdim}
For all graphs $G$ of order $n$, we have $$n\geq \adim(G)\geq \bdim(G) = \Omega(\log{n}).$$
\end{theorem}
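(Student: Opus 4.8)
The plan is to verify the chain of inequalities from left to right, the first two being immediate and the third carrying the content.

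For $n \ge \adim(G)$, I would note that $V(G)$ is itself an adjacency resolving set: for distinct $x, y$, take $z = x$, so that $d_1(x, x) = 0$ while $d_1(y, x) = \min\{d(y,x), 2\} \ge 1$. Hence $\adim(G) \le |V(G)| = n$. For $\adim(G) \ge \bdim(G)$, given an adjacency resolving set $A$ I would consider the broadcast $f$ with $f(v) = 1$ for $v \in A$ and $f(v) = 0$ otherwise; then $\supp_G(f) = A$ and $d_{f(z)}(x, z) = d_1(x, z)$ for every $z \in A$, so $f$ is a resolving broadcast with $c_f(G) = |A|$. Minimizing over all adjacency resolving sets $A$ gives $\bdim(G) \le \adim(G)$.

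The substantive claim $\bdim(G) = \Omega(\log n)$ I would prove by a fingerprinting (counting) argument. Fix an efficient resolving broadcast $f$, let $\supp_G(f) = \{z_1, \dots, z_m\}$, and set $k_i := f(z_i) \ge 1$, so that $\bdim(G) = c_f(G) = \sum_{i=1}^m k_i$. To each vertex $x$ assign its profile $\Phi(x) := \big(d_{k_1}(x, z_1), \dots, d_{k_m}(x, z_m)\big)$. The defining property of a resolving broadcast says exactly that $\Phi$ is injective on $V(G)$. Since $d_{k_i}(x, z_i) = \min\{d(x, z_i), k_i + 1\}$ is an integer in $\{0, 1, \dots, k_i + 1\}$, the $i$-th coordinate of $\Phi$ takes at most $k_i + 2$ distinct values, and therefore $n = |V(G)| \le \prod_{i=1}^m (k_i + 2)$. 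Now I would invoke the elementary inequality $k + 2 \le 3^{k}$, valid for every integer $k \ge 1$ (with equality at $k = 1$), to get $n \le \prod_{i=1}^m 3^{k_i} = 3^{\sum_{i=1}^m k_i} = 3^{\bdim(G)}$, i.e. $\bdim(G) \ge \log_3 n$, which is the desired $\Omega(\log n)$ bound.

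I do not anticipate a genuine obstacle here; the points that require a moment's thought are bookkeeping. First, the argument must cover disconnected $G$: if $d(x, z_i) = \infty$ then $d_{k_i}(x, z_i) = k_i + 1$ still lies in the stated range, so the profile count is unaffected. Second, one should check the degenerate case $\supp_G(f) = \varnothing$ (possible only when $|V(G)| \le 1$), which is consistent since $\log_3 1 = 0$. It is also worth remarking that this argument is the natural common generalization of the bound $\adim(G) \ge \log_3 n$ — the case where every transmitter has range exactly $1$, so each coordinate takes only $3$ values — and that the inequality $k + 2 \le 3^{k}$ is precisely the statement that enlarging a transmitter's range never improves the information gained per unit of cost by more than a constant factor in the exponent.
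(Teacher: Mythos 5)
Your proof is correct; note that the paper itself does not prove this statement but imports it from Geneson and Yi, so there is no in-paper argument to compare against. Your three steps --- $V(G)$ as an adjacency resolving set, converting an adjacency resolving set into a $0/1$-valued resolving broadcast, and the injectivity of the broadcast representation combined with the count $n \leq \prod_i (k_i+2) \leq 3^{\sum_i k_i}$ --- are exactly the standard argument used in the cited source, and your handling of the disconnected and degenerate cases is sound.
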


We improve the lower bound on the broadcast dimension of acyclic graphs of order $n$ from $\Omega(\log{n})$ to $\Omega(\sqrt{n})$ and show that this improved lower bound is asymptotically tight.

\begin{theorem} \label{thm: asymptotic_lower_bound1}
For all acyclic graphs $G$ of order $n$, we have ${\bdim}(G) =  \Omega(\sqrt{n})$, and this lower bound is asymptotically optimal.
\end{theorem}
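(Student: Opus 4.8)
The plan is to treat the two halves of the statement separately, and I would do the (easier) optimality half first. For the optimality I would exhibit an explicit family of trees $T_m$ with $|V(T_m)|=\Theta(m^2)$ and $\bdim(T_m)=O(m)$. Take a ``spine'' path $v_0v_1\cdots v_{2m}$ and attach to each $v_j$ a pendant path of length $\min\{j,2m-j\}$, so that $|V(T_m)|=(2m+1)+2(1+2+\cdots+m)=m^2+3m+1$. Consider the broadcast $f$ with $f(v_0)=f(v_{2m})=2m$ and $f\equiv 0$ elsewhere, of cost $4m$. The leg-lengths are chosen precisely so that every vertex $x$ is within distance $2m$ of both $v_0$ and $v_{2m}$; hence $f$ records the pair $(d(x,v_0),d(x,v_{2m}))$ exactly, and in $T_m$ this pair determines $x$ (its sum recovers the distance from $x$ to the spine and its difference recovers the attachment point $v_j$). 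Thus $f$ is a resolving broadcast and $\bdim(T_m)\le 4m=O(\sqrt{|V(T_m)|})$, which with the lower bound yields $\bdim(T_m)=\Theta(\sqrt{|V(T_m)|})$.

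For the lower bound I would first reduce to the connected case: a transmitter in one component of a forest never separates two vertices of another component, so all but at most one component must be entirely within range of its own transmitters, and those transmitters already resolve that component; splitting into the cases ``the largest component has more than $n/2$ vertices'' or not, and using $\sum_i\sqrt{n_i}\ge\sqrt{\sum_i n_i}$, the bound for forests follows from the bound for trees. So let $T$ be a tree on $n$ vertices, $f$ an efficient resolving broadcast, $S=\supp(f)$, and $c=\bdim(T)=\sum_{z\in S}f(z)$, so $|S|\le c$ and $f(z)\le c$ for all $z$. Let $T_S$ be the minimal subtree of $T$ containing $S$, and for $x\in V(T)$ let $\pi(x)\in T_S$ be the closest vertex of $T_S$ to $x$ and $\delta(x)=d(x,\pi(x))$. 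Since $d(x,z)=\delta(x)+d_{T_S}(\pi(x),z)$ for every $z\in S$, two vertices with the same $(\pi(x),\delta(x))$ have identical fingerprints, so $x\mapsto(\pi(x),\delta(x))$ is injective; equivalently, $T$ is obtained from $T_S$ by attaching at each $p\in T_S$ a single pendant path of some length $h_p\ge 0$, and $n=|T_S|+\sum_{p\in T_S}h_p$. Now $T_S$ has at most $|S|\le c$ leaves and hence $O(c)$ maximal segments between branch/leaf vertices; each such segment is a path of $T$, and the resolving condition applied to its consecutive pairs (a transmitter of range $k$ resolves at most $2k+1$ consecutive pairs on any fixed path) forces each segment to have length $O(c)$, so $|T_S|=O(c^2)$.

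The hard part, and the crux of the whole argument, is bounding $\sum_{p}h_p$ by $O(c^2)$: the naive estimate — each pendant path has length $O(c)$ by the consecutive-pair argument, and there are $|T_S|=O(c^2)$ of them — only yields $O(c^3)$. What must be exploited is that pendant vertices at equal depth $\delta$ over distinct $p,p'\in T_S$ must \emph{also} be separated, and a transmitter $z$ can do so only when $d_{T_S}(p,z)\ne d_{T_S}(p',z)$ and $\delta$ plus the smaller of these is at most $f(z)$; so for each $\delta$ the set $P_\delta=\{p:h_p\ge\delta\}$ has to be resolved inside $T_S$ using only the ``residual range'' $f(z)-\delta$ of each transmitter. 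The plan is to combine this with the facts that $T_S$ has $O(c)$ leaves and bounded-length segments to bound $|P_\delta|$ and sum over $\delta$; in particular one has to argue that a heavily branched region of $T_S$, which a priori could carry many long pendant paths, necessarily forces the transmitter budget to be spread thinly there, keeping those pendant paths short — and getting this trade-off tight (rather than losing a factor of $c$) is where the real work is. Granting it, $n=|T_S|+\sum_p h_p=O(c^2)$, i.e. $\bdim(T)=\Omega(\sqrt n)$, which with the construction above completes the proof.
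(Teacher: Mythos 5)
Your optimality half is correct and is essentially the paper's own construction (the graph $F_k$ of Definition 4.7: a spine with linearly growing pendant paths, resolved by two transmitters of strength $2k$ placed at the ends of the spine); your symmetric variant and the verification that the pair $\paren{d(x,v_0),d(x,v_{2m})}$ determines $x$ are fine. The reduction from forests to trees is also fine (the paper does it more simply by noting that the broadcast dimension of a disconnected graph is at least the sum over components, since a transmitter in one component is at infinite distance from every vertex of another). The problem is the lower bound for trees, which is the actual content of the theorem: your argument is incomplete at exactly the step you yourself flag as ``the crux.'' You correctly reduce to showing $n=\abs{T_S}+\sum_p h_p=O(c^2)$, and you correctly get $\abs{T_S}=O(c^2)$ from the leaf count and the consecutive-pair argument, but for $\sum_p h_p$ you only state a plan (``one has to argue that a heavily branched region \dots forces the transmitter budget to be spread thinly \dots getting this trade-off tight is where the real work is. Granting it, \dots''). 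Nothing in the proposal establishes this, and it is not a routine step: the natural attempt, namely that for each depth $\delta$ the set $P_\delta=\set{p:h_p\geq\delta}$ must be resolved inside $T_S$ by the residual ranges $f(z)-\delta$, asks for a lower bound on the cost of resolving an arbitrary \emph{subset} of a tree, which is essentially the theorem being proved again. As written, the honest bound from your ingredients is $h_p=O(c)$ for each of up to $O(c^2)$ attachment points, i.e.\ $n=O(c^3)$ and only $\bdim(T)=\Omega(n^{1/3})$.

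For comparison, the paper avoids the Steiner-tree decomposition entirely. It fixes a reference vertex $x$, orders the support by decreasing $f(v)-d(v,x)$, and builds the broadcast up one transmitter at a time, tracking the number $B_T(f')$ of distinct broadcast representations. The key structural input is Lemma 4.2: under that ordering, whenever a new transmitter $v$ is added, among the previously placed transmitters whose reach overlaps that of $v$ there is a single one, $a$, that already reaches every vertex reached by both $v$ and any other such transmitter $b$; a four-case analysis of the relative positions of $a$, $b$, $v$ in the tree then shows the extra transmitters $b$ cannot create additional new representations. Hence each update contributes at most $\paren{f(v)+1}\paren{2f(v)+1}\leq 6\paren{f(v)}^2$ new representations, and summing gives $n\leq 6\paren{\sum_v f(v)}^2$, i.e.\ $\bdim(T)\geq\sqrt{n/6}$. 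That amortization over transmitters is precisely the mechanism that replaces the $c\cdot c\cdot c$ count by a $c^2$ count, and it is the piece missing from your argument.
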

Since the broadcast dimension is a generalization of the adjacency dimension, a natural question is how these quantities relate. 
\cref{thm: lowerbound_adim_bdim} gives that $\bdim(G) = \Omega\paren{\log\paren{\adim(G)}}$. In the following question, Geneson and Yi ask whether or not this lower bound is asymptotically optimal. 

\begin{question} (\cite{geneson2020broadcast})\textbf{.} \label{Question: 1}
Is there a family of graphs $\set{G_k}_{k\in \mathds{Z}^+}$ with $\bdim(G_k)=\Theta(k)$ and $\adim(G_k) = 2^{\Omega(k)}$ for every $k\in \mathds{Z}^+$?
\end{question}

We resolve \cref{Question: 1} affirmatively by constructing such a family of graphs. 
Thus, we complete the characterization of how the broadcast dimension of a graph $G$ can vary in the adjacency dimension of $G$: $\adim(G) \geq \bdim(G)  = \Omega( \log(\adim(G)))$, where both sides are tight. Our construction directly implies the following theorem.

\begin{theorem} \label{thm: asymptotica_lower_bound2}
The lower bound $\bdim(G) = \Omega\paren{\log\paren{\adim(G)}}$ is asymptotically optimal.
\end{theorem}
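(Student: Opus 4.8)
The plan is to resolve \cref{Question: 1} by building the requested family of graphs; \cref{thm: asymptotica_lower_bound2} then follows immediately, since if $\bdim(G_k)=\Theta(k)$ and $\adim(G_k)=2^{\Omega(k)}$ then $\log\adim(G_k)=\Omega(k)=\Omega(\bdim(G_k))$, i.e. $\bdim(G_k)=O(\log\adim(G_k))$, which together with \cref{thm: lowerbound_adim_bdim} shows the bound $\bdim(G)=\Omega(\log\adim(G))$ cannot be improved. The design principle is to force $\diam(G_k)=3$: on a diameter-$3$ graph a power-$2$ transmitter at $z$ reports $d_2(\cdot,z)=\min\{d(\cdot,z),3\}$, which is the \emph{exact} distance, so a handful of well-placed power-$2$ transmitters can be very informative; but a power-$1$ transmitter only separates $N(z)$ from its complement and cannot tell distance $2$ from distance $3$, so power-$1$ transmitters will be nearly useless on the bulk of $G_k$.

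Concretely, writing $[k]:=\{1,\dots,k\}$, I would take $2^k$ ``data'' vertices $v_S$ (one per $S\subseteq[k]$), a ``middle'' vertex $m_S$ for each $S$, ``landmark'' vertices $z_1,\dots,z_k$, and two ``hub'' vertices $H,H'$, with edges $v_Sm_S$, $v_SH$, $m_SH$ for every $S$; $m_Sz_i$ whenever $i\in S$; $z_iH'$ for every $i$; and $HH'$. The hubs keep all non-landmark vertices mutually close, while the $z_i$ are reachable from the $v$'s and $m$'s only through $H'$. The effect I want is: $d(v_S,z_i)=2$ exactly when $i\in S$ (witnessed by the common neighbour $m_S$) and $d(v_S,z_i)=3$ exactly when $i\notin S$ (no common neighbour, but the path $v_S\,H\,H'\,z_i$); similarly $d(m_S,z_i)$ equals $1$ if $i\in S$ and $3$ otherwise. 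A short case check on the at-most-$3$ ``hub routings'' should then give $\diam(G_k)=3$, and $\abs{V(G_k)}=2^{k+1}+k+2$.

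The two dimensions are then read off. For $\bdim$: place a power-$2$ transmitter at each $z_i$. The resulting signature $(d(v,z_1),\dots,d(v,z_k))$ is ``$2$ on $S$, $3$ off $S$'' for $v=v_S$; ``$1$ on $S$, $3$ off $S$'' for $v=m_S$; has a single $0$ (in coordinate $i$) for $v=z_i$; is all $2$'s for $v=H$; and all $1$'s for $v=H'$. Hence the only coincidences are the pairs $\{v_\emptyset,m_\emptyset\}$, $\{v_{[k]},H\}$, $\{m_{[k]},H'\}$, and a power-$1$ transmitter at $m_\emptyset$ separates the first two while a power-$1$ transmitter at $v_{[k]}$ separates the third; this yields a resolving broadcast of cost $2k+2$, so $\bdim(G_k)=O(k)$. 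The matching lower bound $\bdim(G_k)=\Omega(k)$ is immediate from \cref{thm: lowerbound_adim_bdim} applied with $\abs{V(G_k)}=2^{\Theta(k)}$, so $\bdim(G_k)=\Theta(k)$. For $\adim$: every $v_S$ has degree exactly $2$, so for any adjacency resolving set $A$ the trace $N(v_S)\cap A$ is one of at most $1+\abs{A}+\binom{\abs{A}}{2}\le\abs{A}^2$ subsets of $A$; on the other hand, $A$ being an adjacency resolving set means precisely that $u\mapsto N(u)\cap A$ is injective on $V(G_k)\setminus A$, so in particular $S\mapsto N(v_S)\cap A$ is injective over the (at least $2^k-\abs{A}$) indices $S$ with $v_S\notin A$. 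Therefore $2^k-\abs{A}\le\abs{A}^2$, which forces $\abs{A}=\Omega(2^{k/2})$, i.e. $\adim(G_k)=2^{\Omega(k)}$.

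The genuinely delicate step is pinning down the metric of the gadget: I must have $d(v_S,z_i)$ equal to exactly $2$ (not $1$) when $i\in S$ and exactly $3$ (neither $1$ nor $\ge 4$) when $i\notin S$, and I must add enough hub structure to bring the diameter down to $3$ without (a) accidentally creating a common neighbour of $v_S$ and $z_i$ for some $i\notin S$, which would collapse the encoding, or (b) letting the diameter drop to $2$, in which case $d_1=d$ everywhere and $\adim(G_k)=\dim(G_k)=\bdim(G_k)$, destroying the exponential gap. Everything after the gadget is fixed — the signature bookkeeping for $\bdim$ and the double-counting bound for $\adim$ — is routine.
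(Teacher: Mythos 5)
Your proof is correct, but the construction is genuinely different from the paper's. The paper builds $X_k$ recursively as a Cartesian product $\widehat{X_0}\,\square\,\widehat{X}^{\,\square k}$ of three-vertex factors, augmented by $k+1$ apex vertices $s_j$ (one per coordinate); the broadcast upper bound $3+2k$ comes from putting weight on the apexes, and the bound $\adim(X_k)\ge 2^k$ is proved by induction on the product structure, splitting the $k$-th graph into three copies of the $(k-1)$-st and arguing that each copy forces its own share of the adjacency resolving set. You instead build a one-shot subset-encoding gadget: $2^k$ degree-two data vertices $v_S$ whose distance vector to the landmarks $z_1,\dots,z_k$ records $S$ via the $2$-versus-$3$ dichotomy, with two hubs forcing $\diam(G_k)=3$ so that power-$2$ transmitters see exact distances. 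I checked your metric claims against your edge set and they hold: $N(v_S)=\{m_S,H\}$, $d(v_S,z_i)=2$ iff $i\in S$ and $3$ otherwise, $d(m_S,z_i)\in\{1,3\}$, the diameter is $3$, and the only signature collisions under the $k$ power-$2$ transmitters are the three pairs you list, which your two extra power-$1$ transmitters resolve (cost $2k+2$). Your $\adim$ lower bound is also a different argument from the paper's: rather than induction, you use the fact that each $v_S$ has degree $2$, so the adjacency traces $N(v_S)\cap A$ range over at most $1+\abs{A}+\binom{\abs{A}}{2}$ sets, giving $2^k-\abs{A}\le\abs{A}^2$ and hence $\adim(G_k)=\Omega(2^{k/2})$. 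What each approach buys: the paper's induction yields the stronger constant $\adim\ge 2^k$ and a reusable product technique, while yours is shorter, fully explicit, avoids induction entirely, and uses a smaller graph ($2^{k+1}+k+2$ versus roughly $3^{k+1}$ vertices); the weaker exponent $2^{k/2}$ is still $2^{\Omega(k)}$, which is all that \cref{Question: 1} and \cref{thm: asymptotica_lower_bound2} require. Both arguments conclude identically by combining the family with \cref{thm: lowerbound_adim_bdim}.
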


The question of the effect of vertex or edge deletion on the metric dimension of a graph was raised by Chartrand and Zhang in \cite{chartrand2003theory} as a fundamental question in graph theory. 
In \cite{geneson2020broadcast}, Geneson and Yi studied the effect of vertex deletion on the broadcast dimension of a graph, and they ask two corresponding questions for edge deletion.

\begin{question} 
(\cite{geneson2020broadcast})\textbf{.} 
\label{Question: 2}
 Is there a family of graphs $\set{G_k}_{k\in \mathds{Z}^+}$ such that $\bdim(G_k)-\bdim(G_k-e_k)$ can be arbitrarily large, where $e_k\in E(G_k)$?
\end{question}

\begin{question} 
(\cite{geneson2020broadcast})\textbf{.} 
\label{Question: 3}
For any graph $G$ and any $e=uv\in E(G)$, is it true that $\bdim(G-e) - \bdim(G) \leq d_{G-e}(u,v) - 1$?
\end{question}

Let $e=uv$ denote an edge of a connected graph $G$ such that $G-e$ is also a connected graph.
We resolve the first question affirmatively and show that the bound proposed in the second question can fail. In fact, the value $\bdim(G-e) - \bdim(G)$ can be arbitrarily larger than $d_{G-e}(u,v)$. We also show that while $\bdim(G-e) - \bdim(G)$ can be arbitrarily large, the ratio $\frac{\bdim(G-e)}{\bdim(G)}$ is bounded from above.

\begin{theorem}
\label{thm: edge_deletion1}
%For a graph $G$ with edge $e\in E(G)$, the
The value $\bdim(G) - \bdim(G-e)$ can be arbitrarily large. 
\end{theorem}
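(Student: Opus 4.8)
The plan is to exhibit an explicit family of connected graphs $\{G_k\}_{k\in\mathds{Z}^+}$, each carrying a non-bridge edge $e_k=u_kv_k$ (so that $G_k-e_k$ is still connected), for which $\bdim(G_k-e_k)$ grows slowly in $k$ while $\bdim(G_k)$ grows strictly faster, so that $\bdim(G_k)-\bdim(G_k-e_k)\to\infty$; this is exactly the assertion of the theorem. The guiding principle is that $e_k$ should act as a shortcut that collapses distance profiles en masse: I want to arrange $k$ pairwise-disjoint gadgets, the $i$-th gadget containing a vertex pair $(x_i,y_i)$, so that in $G_k$ every potential landmark lying outside the $i$-th gadget reports the same truncated distance to $x_i$ as to $y_i$, whereas in $G_k-e_k$ the detour through $e_k$ is gone and these pairs are separated ``for free'' by far-away landmarks.

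Concretely, I would build $G_k$ around the edge $e_k=u_kv_k$: a ``hub side'' hangs off $u_k$, and off $v_k$ hangs a string of $k$ nearly identical gadgets, with $x_i$ and $y_i$ placed at equal distance from $v_k$ inside the $i$-th gadget. The sizes and the geometry are to be chosen so that (a) in $G_k-e_k$, the only short routes from the hub side into the gadget string pass through a single portal at $v_k$, and the interior of the gadget string is laid out so that $x_i$ and $y_i$ then have distinct truncated-distance vectors as seen from the hub; and (b) once $e_k$ is present, a second, competing route appears, and for every landmark not sitting inside the $i$-th gadget the two route lengths to $x_i$ and to $y_i$ coincide, so $x_i$ and $y_i$ become effectively twins relative to all such landmarks. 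Designing the gadget so that this collision cannot be broken by \emph{any} transmitter --- including a very expensive long-range one --- placed on the hub side or at a portal is the heart of the construction.

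Given the construction, the upper bound on $\bdim(G_k-e_k)$ is obtained by writing down an explicit resolving broadcast of small total weight: a bounded-range transmitter at the portal, one anchor transmitter per ``level'' of the gadget string, and $O(1)$ transmitters on the hub side, followed by a direct verification that the resulting truncated-distance vectors are pairwise distinct --- routine but tedious. For the lower bound, property (b) says that any resolving broadcast of $G_k$ must place positive weight inside each of the $k$ gadgets; since the gadgets are vertex-disjoint this already forces total weight at least $k$, and by making each gadget moderately intricate I can force each of these $k$ contributions to exceed a single unit, so that $\bdim(G_k)$ overtakes the small bound we have for $\bdim(G_k-e_k)$ by an unbounded amount.

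The real obstacle I expect is establishing property (b) in the strong form it needs: that $e_k$ traps each pair $(x_i,y_i)$ against \emph{all} outside landmarks, not just the cheap short-range ones. A naive gadget is defeated by a single long-range transmitter that ``sees past'' the shortcut, so the parameters --- the lengths of the two competing routes, the internal metric of each gadget, and the exact location of $e_k$ --- have to be tuned so the $x_i$--$y_i$ equidistance persists at every distance scale and from every direction at once, and simultaneously for all $k$ gadgets. Reconciling these constraints, and separately ruling out that the gadget-internal weight can double as the weight needed to resolve the rest of $G_k$, is where the bulk of the effort goes.
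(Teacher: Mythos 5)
Your strategy is the same one the paper uses: build $G_k$ from $k$ disjoint gadgets hung off an edge $e=AB$ so that, with $e$ present, each gadget contains a pair (the paper's $x_i,y_i$) that no landmark outside the gadget can resolve, forcing extra weight inside every gadget, while deleting $e$ breaks the distance collision and a cheaper broadcast suffices. The paper realizes this with an explicit $22$-vertex layer $L_i$ built around a tree $T_i$ (adapted from a construction of Eroh et al.), exhibits a resolving broadcast of $G_k-e$ costing $9$ per layer plus $3$, and then proves by casework on $T_i$ (three twin pairs, two groups of five equidistant vertices, and four vertices left unreached by any admissible weight-$5$ assignment) that all but $O(1)$ layers must receive total weight at least $10$ in $G_k$, giving a gap of $k-O(1)$.

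The gap in your proposal is that essentially all of the quantitative content is deferred, and the deferred steps are exactly where the difficulty lies. First, forcing each of the $k$ gadget contributions in $G_k$ to ``exceed a single unit'' is not the right target: your own upper bound for $G_k-e_k$ already spends one anchor transmitter per gadget, so what you actually need is that the forced per-gadget cost in $G_k$ strictly exceeds the per-gadget cost achievable in $G_k-e_k$. Any intricacy you add to a gadget (twins and the like) a priori raises both costs by the same amount, since the gadget-internal structure does not see the edge $e$; the paper's gadget is engineered precisely so that the forced weight is $10$ with the edge while $9$ suffices without it, and nothing in your sketch explains how to obtain such an asymmetry. Second, your property (b) --- that $x_i$ and $y_i$ have identical truncated distances to every landmark outside gadget $i$, at every range --- is flagged by you as ``the heart of the construction'' but no gadget achieving it is exhibited; likewise absent is the pigeonhole step the paper uses to show that only $O(1)$ gadgets can be resolved externally (there are only $O(1)$ possible profiles $\paren{d(\cdot,A),d(\cdot,B)}$). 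As written this is a correct plan in the same spirit as the paper, but not a proof.
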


\begin{theorem}
\label{thm: edge_deletion2}
%For a graph $G$ with edge $e=uv\in E(G)$, the 
The value $\bdim(G-e) - \bdim(G)$ can be arbitrarily larger than $d_{G-e}(u,v)$.
\end{theorem}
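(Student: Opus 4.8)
The plan is to exhibit, for each parameter $k$, a connected graph $G_k$ and an edge $e_k = u_kv_k \in E(G_k)$ such that $G_k - e_k$ is connected, $d_{G_k - e_k}(u_k, v_k)$ is small (ideally bounded by a constant, or at least $o$ of the gap), while $\bdim(G_k - e_k) - \bdim(G_k)$ grows without bound. The natural source for such a construction is the contrast between a graph with very small broadcast dimension --- something close to a ``broom'' or a graph with a universal-like vertex, where a single high-power transmitter resolves almost everything --- and the graph obtained by deleting one edge, which destroys that global structure and forces many transmitters. Concretely, I would take $G_k$ to be built from a long path or a large complete-ish gadget together with a vertex $w$ of high eccentricity placed so that a transmitter of power $k$ at $w$ distinguishes a whole family of leaves by their distances; deleting the single edge $e_k$ incident to $w$ (or on the unique path from $w$ into the gadget) should collapse all those distances to $+\infty$ simultaneously, so that the leaves become pairwise unresolved unless many new low-power transmitters are added.

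The key steps, in order, would be: (i) define $G_k$ precisely, e.g. a central path $P$ of length roughly $k$ with many pendant leaves attached at its far endpoint, plus auxiliary structure ensuring $G_k - e_k$ stays connected via a short alternate route (so $d_{G_k-e_k}(u_k,v_k)$ is, say, $2$ or $3$); (ii) prove the upper bound $\bdim(G_k) \le C$ for an absolute constant $C$, by exhibiting one explicit resolving broadcast --- a transmitter of power $\Theta(k)$ at a well-chosen vertex, possibly plus $O(1)$ extra unit transmitters, using the path distances to separate all vertices; (iii) prove the lower bound $\bdim(G_k - e_k) \ge \omega(1)$, in fact $\ge \Omega(k)$ or at least $\to\infty$, by arguing that after the deletion, some set of $\Omega(k)$ vertices all ``look the same'' (e.g. they lie in a newly created symmetric region, or all have distance $+\infty$ truncated identically to every vertex unless a transmitter is very close to them), so that any resolving broadcast must spend total cost growing with $k$; (iv) check $d_{G_k - e_k}(u_k, v_k) = O(1)$, making the gap $\bdim(G_k-e_k)-\bdim(G_k) - d_{G_k-e_k}(u_k,v_k)$ arbitrarily large. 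It may be cleanest to reuse or adapt the construction already used for \cref{thm: edge_deletion1} or for \cref{Question: 1}, just interchanging the roles of $G$ and $G-e$ and verifying the distance condition.

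The main obstacle I anticipate is the lower bound in step (iii): showing that $\bdim(G_k - e_k)$ genuinely blows up. Broadcast dimension is resistant to crude counting arguments because a single transmitter of power $p$ can, in principle, distinguish up to $p+1$ distance classes at once, so one cannot simply say ``there are $N$ hard-to-resolve vertices, hence cost $\ge N$.'' Instead I would need a structural argument: identify in $G_k - e_k$ a large family of vertices that are mutually at distance $+\infty$ (different components) or that are ``twins'' relative to every far-away vertex, so that each of them must be resolved by a transmitter of positive power located within bounded distance of it, and these local transmitters cannot be shared. If the deletion $e_k$ actually disconnects nothing but merely lengthens a path, the argument is subtler and would rely on the fact that truncated distances $d_p(x,z) = \min\{d(x,z), p+1\}$ saturate at $p+1$: by choosing the new path long relative to any affordable transmitter power, all the ``downstream'' leaves reach the saturation value from every cheap transmitter, and distinguishing them again requires cost proportional to how many there are. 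Making this quantitative --- pinning down exactly which separations survive and lower-bounding the total power --- is the crux; the rest (connectivity, the $O(1)$ distance bound, the $O(1)$ upper bound on $\bdim(G_k)$) should be routine verification.
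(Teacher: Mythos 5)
Your plan has a fatal internal inconsistency, and it is ruled out by a theorem in this very paper. In step (ii) you propose to show $\bdim(G_k)\leq C$ for an absolute constant $C$, and in step (iii) to show $\bdim(G_k-e_k)=\Omega(k)$. But \cref{thm: last} gives $\bdim(G-e)\leq 3\,\bdim(G)$ for every graph and every edge, so a graph with bounded broadcast dimension cannot have its broadcast dimension blow up under a single edge deletion; steps (ii) and (iii) cannot both hold. (Your own description of step (ii) already betrays the problem: a ``transmitter of power $\Theta(k)$'' has cost $\Theta(k)$, not $O(1)$.) Any correct construction must have \emph{both} $\bdim(G_k)$ and $\bdim(G_k-e_k)$ growing, with the difference exceeding $d_{G_k-e_k}(u_k,v_k)$ by an unbounded amount --- a delicate balancing of constants that your proposal never confronts. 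A second problem is the intuition that deleting $e_k$ ``collapses distances to $+\infty$'': since $G-e$ must remain connected, and since you insist on a short alternate $u$--$v$ route, every distance in $G_k-e_k$ exceeds the corresponding distance in $G_k$ by at most $d_{G_k-e_k}(u_k,v_k)-1=O(1)$, so the metric barely changes and the saturation effect you invoke is correspondingly weak. Finally, you explicitly defer the lower bound of step (iii) as ``the crux'' without supplying it, so even setting aside the contradiction there is no proof here.

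For comparison, the paper's construction $H_k$ takes the opposite tack on the distance condition: the deleted edge $v_iv_{i+1}$ lies on a cycle of length $\lfloor k/2\rfloor$, so $d_{H_k-e}(v_i,v_{i+1})=\Theta(k)$ is \emph{not} constant; the theorem only requires the difference to exceed this distance by an arbitrarily large amount. Two large spider gadgets $SP\bigl(3k^{(6k)}\bigr)$ at the ends force (\cref{lemma: spider}) any efficient resolving broadcast to place high-power transmitters that reach across the whole graph, which makes $\bdim(H_k)$ cheap relative to what the middle structure would otherwise cost; after the deletion, the two halves move apart by $\Theta(k)$ and \cref{lemma: F_k} shows the $F_j$-type middle portions demand an extra $\Omega(k)$ beyond the distance increase. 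Both $\bdim(H_k)$ and $\bdim(H_k-e)$ are large, consistent with the ratio bound of \cref{thm: last}. Whether a construction with $d_{G-e}(u,v)=O(1)$ and unbounded difference exists is not something the paper establishes, so if you want to pursue your variant you would first need to rule out that small $d_{G-e}(u,v)$ forces a small additive change.
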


\begin{theorem}
\label{thm: last}
For all graphs $G$ and any edge $e\in E(G)$, we have $\frac{\bdim(G-e)}{\bdim(G)} \leq 3$.
\end{theorem}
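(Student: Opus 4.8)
The plan is to take an efficient resolving broadcast $f$ of $G-e$ and to construct from it a resolving broadcast $g$ of $G$ whose total cost $c_g(G)$ is at most $3\,\bdim(G-e)$; since $\bdim(G)\le c_g(G)$, this yields $\bdim(G)\le 3\,\bdim(G-e)$, which is the inequality $\bdim(G-e)/\bdim(G)\ge 1/3$ — wait, the direction we want is $\bdim(G-e)\le 3\,\bdim(G)$, so I would instead start from an efficient resolving broadcast $f$ of $G$ and build a resolving broadcast of $G-e$ of cost at most $3\,c_f(G)=3\,\bdim(G)$. The guiding intuition is that adding the edge $e=uv$ can only change distances by routing shortest paths through $e$, and each such change is "witnessed locally'' near $u$ or $v$; we should be able to repair the resolving broadcast $f$ by boosting the transmitters' ranges and/or planting a bounded amount of extra transmitter cost concentrated around $u$ and $v$.

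The key steps, in order, would be: (1) Fix $f$ efficient for $G$ and let $S=\supp_G(f)$. For a pair $x,y$ that $f$ resolves in $G$ via some $z\in S$, analyze when it fails to be resolved in $G-e$: this happens only if one of $d_{G-e}(x,z)$, $d_{G-e}(y,z)$ exceeds $f(z)+1$ while in $G$ it did not, or the distances from $z$ genuinely shifted because a shortest path used $e$. Crucially, $d_{G-e}(a,z)\ge d_G(a,z)$ always, and $d_G(a,z)\ge \min\{d_{G-e}(a,u),d_{G-e}(a,v)\}+ \text{(something)}$, so the "damage'' is controlled by distances to $u$ and $v$. (2) Define $g$ on $V(G-e)=V(G)$ by keeping the support of $f$ but possibly tripling ranges, i.e. $g(z)=3f(z)$ for $z\in S$, and argue that for any pair not resolved by the boosted $f$, the failure forces $x,y$ to be "close'' to $u$ or $v$ in a quantified sense; then add a small constant-cost gadget of transmitters at or near $u$ and $v$ — but the total added cost must stay $O(\bdim(G))$, not $O(1)$, so more care is needed. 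A cleaner route: show that $\bdim(G-e)\le \bdim(G) + 2\cdot\text{(cost of some efficient broadcast)}$-type inequality by taking two shifted copies. (3) The honest approach is probably: let $f$ be efficient for $G-e$ giving $\bdim(G-e)$, and show directly $\bdim(G-e)\le 3\bdim(G-e)$ is trivial — so the real content is an \emph{upper} bound on $\bdim(G-e)$ in terms of $\bdim(G)$. So: take $f$ efficient for $G$, define $g(z)=3f(z)$ on $S$, and show $g$ resolves $G-e$. For a pair $x,y$ with $d_{G-e}(x,z)\ne d_{G-e}(y,z)$ for some $z\in S$ we may still lose the comparison only if both truncate to $3f(z)+1$; but since $f$ resolved $x,y$ in $G$, $\min\{d_G(x,z),f(z)+1\}\ne\min\{d_G(y,z),f(z)+1\}$, and one checks $d_G(a,z)\le d_{G-e}(a,z)\le 3 d_G(a,z)$ is false in general — however a correct local inequality of the form $d_{G-e}(a,z)\le d_G(a,z)+d_{G-e}(u,v)$ combined with a case analysis on whether the efficient broadcast for $G-e$ (not $G$) places weight near $u,v$ should close it.

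The main obstacle I anticipate is precisely pinning down the correct multiplicative relationship between $d_G$ and $d_{G-e}$ along shortest paths and showing that tripling the range (rather than some other factor) is exactly what compensates: the factor $3$ strongly suggests an argument where a shortest $x$–$z$ path in $G-e$ that is "long'' can be rerouted through a detour of length at most twice the original via $e$, so that $d_{G-e}(a,z)\le 3\,d_G(a,z)$ whenever $d_G(a,z)$ is at least the relevant threshold — making the truncation at $3f(z)+1$ behave like the truncation at $f(z)+1$. Establishing this $3$-to-$1$ distance distortion bound (and handling the boundary cases where distances are small, where no boosting is needed because the adjacency-level information is unchanged except possibly at $u,v$, which can be resolved by a $O(1)$ additional transmitter absorbed into the slack) is the crux; once it is in place, verifying that the boosted broadcast resolves every pair is a routine case check, and the cost bound $c_g(G)=3c_f(G)=3\bdim(G)\ge 3\bdim(G-e)$... again the direction — so the final statement $\bdim(G-e)/\bdim(G)\le 3$ follows from $\bdim(G-e)\le c_g(G-e)=3\bdim(G)$.
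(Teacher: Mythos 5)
Your proposal has the right target inequality ($\bdim(G-e)\le 3\,\bdim(G)$, obtained by repairing an efficient resolving broadcast $f$ of $G$), but the mechanism you settle on --- tripling every range, $g(z)=3f(z)$, justified by a hoped-for distortion bound $d_{G-e}(a,z)\le 3\,d_G(a,z)$ --- does not work, and you essentially concede this yourself mid-argument. Deleting one edge can increase a distance by an arbitrary additive and multiplicative amount (in $C_n$, deleting $uv$ sends $d(u,v)$ from $1$ to $n-1$), so no constant-factor rerouting bound holds, and the fallback inequality $d_{G-e}(a,z)\le d_G(a,z)+d_{G-e}(u,v)$ is useless since $d_{G-e}(u,v)$ is unbounded in terms of $f(z)$. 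The factor $3$ in the theorem has nothing to do with a $3$-to-$1$ path detour; that intuition leads you down a dead end, and the "constant-cost gadget near $u$ and $v$" you briefly float is also insufficient as stated (and you correctly note you cannot afford more than $O(\bdim(G))$ extra, but never resolve how to spend it).

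The paper's proof keeps $f$ unchanged on its support and instead sets $f'(u)=f'(v)=b$ where $b=\max_w f(w)$; since $b\le\bdim(G)$, the added cost is at most $2\,\bdim(G)$, giving the factor $3$. The real content is then local: if $z$ resolves $x,y$ in $G$ but not in $G-e$, then $d_G(u,z)\ne d_G(v,z)$ (say $d_G(v,z)<d_G(u,z)$), and every $x$--$z$ geodesic in $G$ that used $e$ passes through $u$ before $v$, so $d_{G-e}(x,u)=d_G(x,u)\le d_G(x,z)\le f(z)\le b=f'(u)$; a short case analysis (on whether the deletion increased the distance from $z$ to one or both of $x,y$, and on whether $z$ reached $y$) then shows $d_{G-e}(x,u)\ne d_{G-e}(y,u)$, so $u$ resolves the pair. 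This is the idea your proposal is missing: the repair is concentrated at the two endpoints of $e$ with range equal to the maximum transmitter value, not spread across the support by scaling. As written, your argument has a genuine gap and would not close.
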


The rest of this paper is structured as follows. 
In \cref{Section: General}, we introduce relevant terminology and notation, and we record preliminary results on the metric, adjacency, and broadcast dimension of graphs that are necessary for the rest of the paper. 
In \cref{section: paths_and_cycles}, we examine the broadcast dimension of paths and cycles.
In \cref{Section: Acyclic}, we discuss results on the broadcast dimension of acyclic graphs and prove \cref{thm: asymptotic_lower_bound1}.
In \cref{Section: Comparing}, we resolve \cref{Question: 1} affirmatively and prove \cref{thm: asymptotica_lower_bound2}. In \cref{Section: Edge_Deletion}, we prove Theorems \ref{thm: edge_deletion1}, \ref{thm: edge_deletion2}, and \ref{thm: last}.
Finally in \cref{Section: Future_Work}, we conclude with some open problems about broadcast dimension.

\section{Preliminaries}
\label{Section: General}
In this section, we first introduce relevant terminology and notation that we will use throughout the paper. We then record some preliminary results on the metric, adjacency, and broadcast dimension of graphs. For the rest of this section, we let $f:V(G)\rightarrow \mathds{Z}^+ \cup \{0\}$ for graph $G=(V(G), E(G))$.

We denote by $P_n$, $C_n$, and $K_n$ the path, cycle, and complete graph on $n$ vertices, respectively. We say $\diam(G) = \max\set{d(u,v)\mid u,v\in V(G)}$. 
We denote by $\mathbf{1}$ the vector with 1 for each entry and $\mathbf{2}$ the vector with 2 for each entry, where the length of the vector is inferred from context. For an arbitrary set $S$, a totally ordered set $Y$, and a function $g:S\rightarrow Y$, we define $\argmax_{x\in S} g(x)$ to be any $x^*\in S$ such that $g(x) \leq g(x^*)$ for all $x\in S$. We define $\argmin_{x\in S} g(x)$ analogously.

\begin{definition}
A vertex $z\in \supp_G(f)$ \textit{resolves} a pair of distinct vertices $x,y\in V(G)$ if $$d_{f(z)}(x,z) \neq d_{f(z)}(y,z).$$
\end{definition}

In order for a vertex $z\in \supp_G(f)$ to resolve a pair of vertices $x,y\in V(G)$, we must have $f(z)\geq d(x,z)$ or $f(z)\geq d(y,z)$. We formally define this notion below.

\begin{definition}
A vertex $z\in \supp_G(f)$ \textit{reaches} a vertex $v\in V(G)$ with respect to $f$ if $f(z) \geq d(v,z)$, and the function $f$ \textit{reaches} a vertex $v\in V(G)$ if there is a vertex $z\in \supp_G(f)$ that reaches $v$. 
\end{definition}

By definition, the function $f$ is a resolving broadcast of $G$ if and only if every pair of distinct vertices in $V(G)$ is resolved by a vertex in $\supp_G(f)$. Thus, any resolving broadcast $f$ of $G$ must reach all but at most one vertex in $V(G)$. 
Equivalently, the function $f$ is a resolving broadcast of $G$ if and only if every vertex of $G$ is \textit{distinguished}; that is, every vertex of $G$ is uniquely characterized by its distances to the vertices in $\supp_G(f)$ that reach it. We formally define this term below.

\begin{definition}
%For an ordered set of distinct vertices $S =\{u_1,u_2,\dots,u_k\}\subseteq V(G)$, the \textit{metric representation} and \textit{adjacency representation} of a vertex $v\in V(G)$ with respect to $S$ are the $k$-vectors $r_S(v) =\allowbreak \paren{d(v, u_1),  \dots, d(v, u_k)}$ and $a_S(v) =\allowbreak \paren{d_1(v, u_1), \dots, d_1(v, u_k)}$, respectively.
Let $k = \abs{\supp_G(f)}$.
The \textit{broadcast representation} of a vertex $v\in V(G)$ with respect to $f$ is the $k$-vector $b_f(v) =\allowbreak \paren{d_{f(u_1)}(v, u_1), \dots, d_{f(u_k)}(v, u_k)}$ for $u_i \in \supp_G(f)$.
We say that a vertex $v\in V(G)$ is \textit{distinguished}  if it has a unique broadcast representation $b_f(v)$. 
\end{definition}

The following observations give insight into how the metric, adjacency dimension, and broadcast dimension of graphs are related and will be useful throughout the rest of the paper.

\begin{observation} (\cite{geneson2020broadcast})\textbf{.}
The following properties hold for any graph $G$.
\begin{enumerate}
	\item We have $\dim(G) \leq \bdim(G) \leq \adim(G)$.
	\item If $\diam(G)\leq 2$, then we have $\dim(G)=\bdim(G)=\adim(G)$.
\end{enumerate}
\end{observation}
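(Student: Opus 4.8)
The plan is to establish the two inequalities in part (1) by converting an optimal structure for one parameter into a feasible structure for the adjacent parameter, and then to deduce part (2) by observing that the relevant distance truncations collapse when $\diam(G)\le 2$.

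For the right inequality $\bdim(G)\le\adim(G)$ in part (1), I would start from an efficient adjacency resolving set $A$ with $\abs{A}=\adim(G)$ and define $f$ by $f(v)=1$ for $v\in A$ and $f(v)=0$ otherwise. Then $\supp_G(f)=A$ and, for every $z\in A$, we have $f(z)=1$, so $d_{f(z)}(x,z)=d_1(x,z)$ for all $x$. Since $A$ adjacency-resolves every pair, the same vertex $z$ witnesses $d_{f(z)}(x,z)\ne d_{f(z)}(y,z)$, so $f$ is a resolving broadcast of cost $c_f(G)=\sum_{v}f(v)=\abs{A}=\adim(G)$, giving the bound.

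For the left inequality $\dim(G)\le\bdim(G)$, I would start from an efficient resolving broadcast $f$ with $c_f(G)=\bdim(G)$ and take $S:=\supp_G(f)$. Two facts are needed. First, $\abs{S}\le\sum_{v}f(v)=\bdim(G)$, since every $v\in S$ contributes $f(v)\ge 1$ to the sum. Second, $S$ is an ordinary resolving set: for distinct $x,y$ there is $z\in S$ with $d_{f(z)}(x,z)\ne d_{f(z)}(y,z)$, and since $d_{f(z)}(\cdot,z)=\min\{d(\cdot,z),f(z)+1\}$ is a function of the ordinary distance to $z$, equal distances $d(x,z)=d(y,z)$ would force equal truncated distances; contrapositively $d(x,z)\ne d(y,z)$, so $z$ resolves $x,y$. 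Hence $\dim(G)\le\abs{S}\le\bdim(G)$. This ``support is a resolving set'' step is the only place demanding a genuine argument, and is the closest thing to an obstacle in an otherwise mechanical proof.

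Finally, for part (2), I would use that $\diam(G)\le 2$ forces $d(u,v)\le 2$ for all $u,v$, so $d_1(u,v)=\min\{d(u,v),2\}=d(u,v)$ identically. Thus the defining conditions of a resolving set and of an adjacency resolving set coincide pair-by-pair, giving $\dim(G)=\adim(G)$. Combining this equality with the chain $\dim(G)\le\bdim(G)\le\adim(G)$ from part (1) collapses all three quantities to a common value, completing the proof.
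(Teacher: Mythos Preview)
Your argument is correct. Note, however, that the paper does not supply its own proof of this observation: it is stated as a known result, attributed to \cite{geneson2020broadcast}, and left unproved. So there is no ``paper's proof'' to compare against. Your proof is exactly the standard justification one would expect for this fact: convert an adjacency resolving set to a $\{0,1\}$-valued resolving broadcast to get $\bdim(G)\le\adim(G)$; take the support of an optimal resolving broadcast and observe that truncated distances being unequal forces true distances to be unequal to get $\dim(G)\le\bdim(G)$; and collapse $d_1$ to $d$ when $\diam(G)\le 2$ to pin $\dim(G)=\adim(G)$, which together with part (1) squeezes $\bdim(G)$ as well.
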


The \textit{closed neighborhood} of a vertex $v\in V(G)$ is $N[v] = \{u\in V(G): uv\in E(G)\} \cup \{v\}$. 
Two distinct vertices $u,v \in V(G)$ are called \textit{twin vertices} if $N[u] =N[v]$.

\begin{observation}
\label{obs: twin}
If $u,v\in V(G)$ are twin vertices, then the following properties hold.
\begin{enumerate}
\item For any resolving set $S$ of $G$, we have that $u\in S$ or $v\in S$ \cite{hernando2010extremal}.
\item For any adjacency resolving set $A$ of $G$, we have that $u\in A$ or $v\in A$ \cite{jannesari2012metric}.
\item For any resolving broadcast $f$ of $G$, we have that $u\in \supp_G(f)$ or $v\in \supp_G(f)$ \cite{geneson2020broadcast}.
\end{enumerate}
\end{observation}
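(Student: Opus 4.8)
The three items share a single underlying fact: twin vertices are equidistant from every other vertex, so the pair $\set{u,v}$ can be resolved only by $u$ or $v$ themselves, no matter which of the three distance notions one uses. Accordingly, I would first isolate and prove the distance lemma that if $u,v$ are twins then $d(u,w)=d(v,w)$ for every $w\in V(G)\setminus\set{u,v}$, and then read off all three items from it. Note first that $N[u]=N[v]$ forces $uv\in E(G)$ (since $u\in N[u]=N[v]$), and that every vertex other than $u,v$ is adjacent to $u$ if and only if it is adjacent to $v$.

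To prove the lemma, fix $w\neq u,v$ and take a shortest $u$--$w$ path $u=x_0,x_1,\dots,x_\ell=w$ with $\ell=d(u,w)$. If $x_1=v$, then the subpath $v=x_1,\dots,x_\ell=w$ has length $\ell-1$, so $d(v,w)\leq \ell-1\leq d(u,w)$. Otherwise $x_1\in N(u)\setminus\set{v}$, which equals $N(v)\setminus\set{u}$ because $N[u]=N[v]$, so $v,x_1,\dots,x_\ell$ is a $v$--$w$ walk of length $\ell$ and hence $d(v,w)\leq\ell=d(u,w)$. In either case $d(v,w)\leq d(u,w)$; swapping the roles of $u$ and $v$ gives the reverse inequality, so $d(u,w)=d(v,w)$.

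Given the lemma, each item reduces to a one-line contradiction argument, because every resolving condition for the pair $\set{u,v}$ only reads (possibly truncated) distances from vertices in the relevant set or support. If a resolving set $S$ contained neither $u$ nor $v$, then every $z\in S$ has $z\neq u,v$, so $d(u,z)=d(v,z)$ and no $z\in S$ resolves $u,v$, contradicting that $S$ resolves $G$; this gives (1). The same holds after truncation: $d(u,z)=d(v,z)$ yields $d_1(u,z)=\min\set{d(u,z),2}=\min\set{d(v,z),2}=d_1(v,z)$ and, for any value of $f(z)$, $d_{f(z)}(u,z)=\min\set{d(u,z),f(z)+1}=\min\set{d(v,z),f(z)+1}=d_{f(z)}(v,z)$ for every $z\neq u,v$, so an adjacency resolving set avoiding both $u,v$, respectively a resolving broadcast with $u,v\notin\supp_G(f)$, would fail to resolve $\set{u,v}$, proving (2) and (3). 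There is no genuine obstacle beyond the distance lemma itself; the only point needing care is the case where a shortest path out of $u$ begins by stepping to $v$, but even then the resulting $v$--$w$ subpath supplies the needed inequality, so the equidistance conclusion survives.
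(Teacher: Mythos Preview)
Your argument is correct. The paper does not prove \cref{obs: twin} at all; it records the three items as an observation, attributing them to \cite{hernando2010extremal}, \cite{jannesari2012metric}, and \cite{geneson2020broadcast} respectively, and gives no proof. Your equidistance lemma $d(u,w)=d(v,w)$ for all $w\neq u,v$ is exactly the standard fact underlying all three statements, and your derivation of it---handling separately the case where the first step of a shortest $u$--$w$ path is to $v$---is clean and complete. The deductions of (1)--(3) from the lemma are immediate as you note, since truncating equal distances at any threshold preserves equality.
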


\section{Paths and Cycles}
\label{section: paths_and_cycles}
Here we restrict our attention to path and cycle graphs.
It is easy to see that $\dim(P_n) = 1$ and $\dim(C_n) = 2$ for every integer $n\geq 3$. The adjacency dimension and the broadcast dimension, respectively, of paths and cycles were determined in \cite{jannesari2012metric} and \cite{geneson2020broadcast}.

\begin{theorem} [\cite{jannesari2012metric}] \label{thm: paths_adj}
For every integer $n\geq 4$, we have $\adim(P_n) = \adim(C_n) = \floor{\frac{2n+2}{5}}$.
\end{theorem}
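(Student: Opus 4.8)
The plan is to prove that $\floor{\frac{2n+2}{5}}$ is simultaneously a lower and an upper bound for $\adim(P_n)$ and for $\adim(C_n)$. The lower bound comes from analyzing the combinatorial structure of an arbitrary adjacency resolving set, and the upper bound from an explicit, nearly periodic, construction.

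For the lower bound, the starting point is that $d_1$ takes only the values $0,1,2$: a landmark $z\in A$ assigns $0$ to itself, $1$ to each neighbor, and $2$ to every other vertex. Hence every landmark is automatically distinguished from every other vertex, while two non-landmark vertices $x,y$ are distinguished by $A$ if and only if the sets of landmarks adjacent to $x$ and to $y$ differ; in particular at most one non-landmark vertex may fail to be adjacent to any landmark, since such a vertex has representation $\mathbf{2}$. Since in $P_n$ every vertex has at most two neighbors and in $C_n$ exactly two, each ``adjacent-landmark set'' has size at most $2$. Now list the landmark positions in order along $P_n$ (or around $C_n$) and consider the gaps, i.e. the maximal runs of consecutive non-landmark vertices between them (and, for $P_n$, beyond the first and last). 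A short case check shows that no internal gap has length $\geq 4$ and at most one internal gap has length $3$ (otherwise two non-landmarks would both have representation $\mathbf{2}$), while for $P_n$ each end gap has length at most $2$. Call a non-landmark vertex \emph{lonely} if it is adjacent to exactly one landmark $\ell$; its representation then records only $\ell$, so a given landmark can be the unique landmark-neighbor of at most one lonely vertex, whence the total number of lonely vertices is at most $|A|$. Every internal gap of length $2$ or $3$ produces exactly two lonely vertices (its two end vertices), so if $a$ denotes the number of internal gaps of length $\geq 2$ then $2a\leq |A|$. Combining this with $\sum(\text{gap lengths})=n-|A|$ and the fact that the remaining internal gaps have length $\leq 1$ gives $n-|A|\leq \tfrac32|A|+O(1)$, i.e. $|A|\geq \tfrac25 n - O(1)$; it then remains to track the additive constants — which differ slightly for $P_n$ and $C_n$ because only the path has two free end gaps, and one must account for the single permitted $\mathbf{2}$-vertex — to sharpen this to $|A|\geq\floor{\frac{2n+2}{5}}$.

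For the upper bound, observe that placing landmarks at all vertices in positions congruent to $1$ and to $3$ modulo $5$ resolves the infinite path: within each block of five consecutive positions the two landmarks yield one \emph{doubly covered} non-landmark, whose (unordered) pair of adjacent landmarks is unique to it, and the other two non-landmarks are lonely, each being the unique lonely neighbor of a distinct landmark, so all representations are pairwise distinct. For $P_n$ itself I would argue by induction in steps of $5$: since $\floor{\frac{2(n+5)+2}{5}}=\floor{\frac{2n+2}{5}}+2$, it suffices to verify the base cases $n\in\{4,5,6,7,8\}$ directly (for instance $\{v_1,v_4,v_6\}$ is an efficient adjacency resolving set of $P_8$) and then, given an efficient set for $P_n$, to adjoin two well-chosen landmarks among the five appended vertices and check that the new vertices receive pairwise distinct, previously unused adjacent-landmark sets. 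For $C_n$ one runs the same periodic pattern around the cycle with a bounded local correction at the one place where $5$ fails to divide $n$, again checking the finitely many residues of $n$ modulo $5$.

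The main obstacle is the constant-tracking in the lower bound: the asymptotic rate $\tfrac25 n$ falls out immediately, but extracting the exact floor $\floor{\frac{2n+2}{5}}$ requires a careful case analysis of how the end gaps (for $P_n$) or the wrap-around seam (for $C_n$) interact with both the at-most-one-$\mathbf{2}$-vertex constraint and the ``at most one lonely vertex per landmark'' constraint, since a rough count loses an additive constant. The construction side is comparatively routine once the right periodic pattern and the handful of boundary cases are in hand.
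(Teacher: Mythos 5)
This theorem is quoted by the paper from Jannesari and Omoomi \cite{jannesari2012metric} and is not proved anywhere in the text, so there is no in-paper argument to compare yours against; the closest relative is the gap-counting in the proof of \cref{lemma: hat_bdim}, which uses the same vocabulary (gaps of length at most $3$, the forbidden configuration of two vertices with representation $\mathbf{2}$). Judged on its own, your skeleton is sound and the ingredients you isolate really do assemble into the exact constant, not just the rate $\tfrac25 n-O(1)$: for $C_n$, writing $a$ for the number of gaps of length at least $2$, you get $n\leq |A|+(|A|-a)\cdot 1+2a+1=2|A|+a+1$ (the $+1$ being the one permitted length-$3$ gap), and combining with $2a\leq|A|$ gives $n\leq\tfrac{5|A|+2}{2}$, i.e.\ $|A|\geq\tfrac{2n-2}{5}$, and $\lceil\tfrac{2n-2}{5}\rceil=\lfloor\tfrac{2n+2}{5}\rfloor$; the path case closes the same way once you note that a length-$2$ end gap and a length-$3$ internal gap cannot coexist and that a length-$1$ end gap also consumes one ``lonely'' slot. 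So the constant-tracking you defer is genuinely routine rather than a hidden difficulty. Two small points to tighten: in the upper-bound induction from $P_n$ to $P_{n+5}$ you should not start from an arbitrary efficient set (which need not extend cleanly) but simply place the two new landmarks at $v_{n+2}$ and $v_{n+4}$ --- then every old vertex keeps its representation, every new vertex is adjacent to a new landmark so no old--new collision can occur, and the three new non-landmarks receive the pairwise distinct sets $\{v_{n+2}\}$ or $\{v_n,v_{n+2}\}$, $\{v_{n+2},v_{n+4}\}$, and $\{v_{n+4}\}$; and for $C_n$ you should actually exhibit the corrected pattern for each residue of $n$ modulo $5$ rather than assert it. With those details written out, the proof is complete and is essentially the standard argument for this result.
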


\begin{theorem} [\cite{geneson2020broadcast}]
\label{thm: paths}
For every integer $n\geq 4$, we have $\bdim(P_n) = \bdim(C_n) = \floor{\frac{2n+2}{5}}$.
\end{theorem}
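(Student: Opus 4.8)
The plan is to derive both equalities from the already-established value of the adjacency dimension in \cref{thm: paths_adj}, by showing that for paths and cycles a resolving broadcast can never outperform an adjacency resolving set. The upper bounds are immediate: since $\bdim(G)\le\adim(G)$ for every graph, \cref{thm: paths_adj} gives $\bdim(P_n)\le\floor{\frac{2n+2}{5}}$ and $\bdim(C_n)\le\floor{\frac{2n+2}{5}}$. It therefore suffices to prove the matching lower bounds $\bdim(P_n)\ge\floor{\frac{2n+2}{5}}$ and $\bdim(C_n)\ge\floor{\frac{2n+2}{5}}$, which by \cref{thm: paths_adj} are equivalent to the reverse inequalities $\adim(P_n)\le\bdim(P_n)$ and $\adim(C_n)\le\bdim(C_n)$.

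The heart of the argument is a local replacement that turns any resolving broadcast into an adjacency resolving set of no larger total size. Fix an efficient resolving broadcast $f$ of $P_n$ with support $z_1<\dots<z_m$ and ranges $k_i=f(z_i)\ge1$, so $c_f(P_n)=\sum_i k_i=\bdim(P_n)$. For each $i$ I would replace the single range-$k_i$ transmitter at $z_i$ by the set $W_i=\set{z_i-k_i+1,\,z_i-k_i+3,\,\dots,\,z_i+k_i-1}$ of $k_i$ vertices spaced two apart and symmetric about $z_i$ (intersected with $V(P_n)$), and set $A=\bigcup_i W_i$, so that $\abs{A}\le\sum_i\abs{W_i}\le\sum_i k_i=\bdim(P_n)$. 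The crucial claim is a reproduction lemma: for every vertex $v$, the value $d_{k_i}(v,z_i)=\min\set{d(v,z_i),k_i+1}$ is determined by the range-$1$ readings $\paren{d_1(v,w)}_{w\in W_i}$. Granting this, whenever $f$ resolves a pair $x,y$ through $z_i$ the two vertices have different $d_{k_i}$-readings there, hence different $W_i$-reading vectors, so some $w\in W_i\subseteq A$ resolves them; thus $A$ resolves every pair that $f$ resolves and is an adjacency resolving set, giving $\adim(P_n)\le\abs{A}\le\bdim(P_n)$ as required.

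To prove the reproduction lemma I would establish the stronger statement that the $k_i$ range-$1$ transmitters of $W_i$ assign pairwise distinct reading vectors to the $2k_i+1$ vertices of the window $\sqb{z_i-k_i,\,z_i+k_i}$ and assign the all-$2$ vector to every vertex outside it. This follows from a parity analysis: the offsets of the members of $W_i$ from $z_i$ all share one parity, so a window vertex at signed offset $t$ either coincides with a member of $W_i$ (reading $0$ there) or lies strictly between two consecutive members (reading $1$ at each), and in either case the positions of the $0$- and $1$-entries pin down $t$ exactly; a vertex at distance more than $k_i$ from $z_i$ is at distance at least $2$ from every member of $W_i$ and so reads $2$ everywhere. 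Consequently distinct window vertices receive distinct vectors while all farther vertices read $k_i+1$, so $d_{k_i}(\cdot,z_i)$ is indeed a function of the $W_i$-readings. The cycle case is identical with cycle distance in place of path distance, replacing each transmitter by the analogous spacing-$2$ arc of range-$1$ transmitters around $z_i$.

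I expect the main obstacle to be the boundary and degeneracy cases rather than the core lemma. On a path, a transmitter near an endpoint has part of $W_i$ falling outside $V(P_n)$; clipping only decreases $\abs{W_i}$ (so the size bound $\abs{A}\le\bdim(P_n)$ is safe), but one must re-verify that the surviving on-path transmitters still reproduce the capped distance on the truncated window, where the parity of $k_i$ governs whether $z_i$ itself lies in $W_i$. On a cycle the clean windowing lemma requires $2k_i+1\le n$ so that the window does not wrap; if some transmitter violates this, then $k_i>\frac{n-1}{2}$, so the total cost already exceeds $\floor{\frac{2n+2}{5}}$ for $n\ge9$ (with the finitely many smaller $n$ checked directly), and otherwise every transmitter satisfies $2k_i+1\le n$ and the reduction applies verbatim. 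Handling these cases carefully, together with the verification of the windowing lemma, completes the proof that $\bdim(P_n)=\bdim(C_n)=\floor{\frac{2n+2}{5}}$.
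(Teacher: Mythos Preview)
Your approach is essentially the same as the one the paper (via its \cref{lemma: at_most_one}, attributed to \cite{geneson2020broadcast}) uses: reduce an efficient resolving broadcast of $P_n$ or $C_n$ to one taking only values in $\{0,1\}$, i.e.\ to an adjacency resolving set of the same total cost, and then invoke \cref{thm: paths_adj}. In fact your one-shot replacement $W_i=\{z_i-k_i+1,z_i-k_i+3,\dots,z_i+k_i-1\}$ is exactly what the paper's iterative ``reduce by $2$ at $v_j$, add range-$1$ transmitters at $v_{j\pm(x-1)}$'' procedure produces when fully unrolled, and your reproduction lemma makes explicit the correctness claim that the paper's lemma leaves implicit.

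The one place your sketch needs more than you have written is the path boundary, and here naive clipping genuinely fails. Take $P_n$ with a range-$2$ transmitter at the leaf $v_1$ (this occurs in an efficient broadcast already for $P_4$). Your $W_i$ is $\{v_0,v_2\}$ clipped to $\{v_2\}$, and the single range-$1$ transmitter at $v_2$ gives $d_1(v_1,v_2)=d_1(v_3,v_2)=1$, while the original transmitter had $d_2(v_1,v_1)=0\neq 2=d_2(v_3,v_1)$; so the clipped $W_i$ does \emph{not} reproduce $d_{k_i}(\cdot,z_i)$ on the truncated window. The paper's \cref{lemma: at_most_one} handles precisely this case with a special rule (``if $v_j$ is a leaf and $x=2$, set $f'(v_j)=1$ and $f'(u)=1$ for the neighbour $u$''), which in your language means using the slack $k_i-|W_i|$ to add the leaf itself to $W_i$. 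With that patch (and the analogous one whenever clipping removes vertices), your argument goes through and coincides with the paper's. Your treatment of the cycle wraparound is fine as stated.
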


%Geneson and Yi \cite{geneson2020broadcast} showed that the broadcast dimension of all paths and cycles is the same as the adjacency dimension of these graphs, found by Jannesari and Omoomi in \cite{jannesari2012metric}. 
In this section, we prove the following result on efficient resolving broadcasts of paths and cycles.

\begin{proposition}
\label{prop: my_paths_cycles}
For every $n \in \mathds{Z}^+$ and $G\in \set{P_n,C_n}$, if $f$ is an efficient resolving broadcast of $G$, then $f(v) \leq 2$ for all $v\in V(G)$.
\end{proposition}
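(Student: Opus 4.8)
The plan is to prove the contrapositive locally: I will show that a resolving broadcast of $G\in\set{P_n,C_n}$ having a transmitter of value at least $3$ can always be modified into a resolving broadcast of strictly smaller cost, hence cannot be efficient. For $n$ small enough that $\bdim(G)\le 2$ (so in particular for all $G$ with $\diam\le 2$) the statement is vacuous, since a single value-$3$ transmitter already costs $3$; so I assume $\bdim(G)\ge 3$. Write $V(G)=\set{v_1,\dots,v_n}$ with the natural linear (resp. cyclic) order, and suppose $f$ is a resolving broadcast with $f(z)\ge 3$ for some $z$; fix such a $z=v_p$ with $f(z)=k$ maximal.

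First I would establish a convenient resolving criterion on paths and cycles. On $P_n$, for a transmitter at $v_p$ of value $k$ one checks directly that $v_p$ resolves $\set{v_a,v_b}$ if and only if $v_p$ is not equidistant from $v_a$ and $v_b$ (i.e. $a+b\ne 2p$) and $v_p$ reaches the nearer of $v_a,v_b$; equivalently, the transmitter at $v_p$ fails exactly on the pairs symmetric about $v_p$ and on the pairs both at distance $>k$ from $v_p$. The crucial structural consequence is a symmetry obligation: for every interior transmitter $v_p$ and every $s$ with $1\le s\le\min(k,p-1,n-p)$, the pair $\set{v_{p-s},v_{p+s}}$ is not resolved by $v_p$, so some transmitter of $\supp_G(f)$ other than $v_p$ must reach $v_{p-s}$ or $v_{p+s}$. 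Thus many of the vertices within distance $k$ of $v_p$ are already "served" by the rest of $f$. The analogous criterion on $C_n$ needs more care, because a vertex of $C_n$ can be equidistant from two distinct pairs at once (up to two antipodal "symmetry directions"); I would handle this as a parallel but separate case.

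Next comes the surgery at $v_p$. I would replace $f(v_p)=k$ by $f'(v_p)=2$ and then add at most $k-3$ further units of broadcast, placed as value-$1$ transmitters on a carefully chosen subset of $\set{v_{p-k},\dots,v_{p-3}}\cup\set{v_{p+3},\dots,v_{p+k}}$, with the result $f'$ that $c_{f'}(G)\le c_f(G)-(k-2)+(k-3)=c_f(G)-1$. By the criterion above, the only pairs possibly lost when shrinking $v_p$'s value from $k$ to $2$ are pairs of the form $\set{v_{p\pm 3},v_b}$ with $v_b$ far from $v_p$ and far from the other transmitters of $f$; using the symmetry obligations one argues that such a pair is either already resolved by a transmitter of $f$ distinct from $v_p$, or is re-resolved by one of the new value-$1$ transmitters, and that a budget of $k-3$ units suffices to cover all of them (for $k=3$ the budget is $0$ and the point is that every lost pair was already resolved redundantly). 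Endpoint transmitters on $P_n$, where there are no symmetric pairs about $v_p$ and one saves even more, are an easy separate case; the cycle case uses \cref{thm: paths} (so that $\bdim(C_n)=\bdim(P_n)$) together with the two-direction version of the criterion. Having produced $f'$ with $c_{f'}(G)<c_f(G)$, we conclude $f$ is not efficient, which is the contrapositive of the proposition.

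The main obstacle is precisely this re-resolving step: one must pin down exactly which pairs genuinely rely on the value-$k$ transmitter at $v_p$ (as opposed to being resolved twice over in $f$), and then verify that a sparse family of value-$1$ transmitters among $v_{p-k},\dots,v_{p-3},v_{p+3},\dots,v_{p+k}$ — roughly one every few vertices, well within the $k-3$ budget — handles all of them, leaning on the symmetry obligations to guarantee that most of the relevant vertices are already distinguished by the untouched part of $f$. The cycle case is more bookkeeping-heavy for the same reason that the resolving criterion is more delicate there.
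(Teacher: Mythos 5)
Your overall plan---show that any resolving broadcast with a transmitter of value at least $3$ can be locally modified into a strictly cheaper one---is the same contradiction scheme the paper uses, but the surgery you propose has a genuine gap at its central step. First, your description of the lost pairs is too narrow: lowering $f(v_p)$ from $k$ to $2$ loses exactly the pairs $\set{v_a,v_b}$ with $3\le \min\set{d(v_a,v_p),d(v_b,v_p)}\le k$ and $d(v_a,v_p)\neq d(v_b,v_p)$, so in particular every pair of vertices at distinct distances in $\set{3,\dots,k}$ on the \emph{same} side of $v_p$ is lost, not only pairs involving $v_{p\pm 3}$. Second, and more seriously, the budget of $k-3$ added units is never verified, and the structural facts you derive are not strong enough to verify it. Your symmetry obligation only forces, for each $s\le k$, that \emph{one} of $v_{p-s},v_{p+s}$ be reached by another transmitter; it permits all of $v_{p-3},\dots,v_{p-k}$ to be reached by $v_p$ alone, in which case after the shrink these $k-2$ vertices all acquire the identical broadcast representation (entry $3$ at $v_p$, maximal elsewhere) and must be distinguished from scratch by your patch---and even on the other side, being ``reached'' by some transmitter does not mean being ``distinguished'' by it. In the boundary case $k=3$ the budget is $0$ and the claim that ``every lost pair was already resolved redundantly'' is simply asserted: if $v_{p-3}$ becomes unreached after the shrink and $f$ already had its one permitted unreached vertex elsewhere, the shrunk function is not a resolving broadcast at all, and you have nothing left to spend.

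For contrast, the paper sidesteps this bookkeeping by first proving $\wh{\bdim}(P_n)=\wh{\bdim}(C_n)=\floor{\frac{2n+3}{5}}$ (\cref{lemma: hat_bdim}). It takes the set $T$ of vertices reached by the maximal transmitter $v_i$ together with a second transmitter $v_j$ that reaches a neighbor of $v_i$, bounds $|T|\le 2f(v_i)+f(v_j)+2$ (and $|T|\le 2f(v_i)+1$ when $f(v_j)=1$), and replaces the cost $f(v_i)+f(v_j)$ by a fresh broadcast of cost $\floor{\frac{2|T|+3}{5}}$ that reaches and distinguishes all of $T$; since this is roughly $\tfrac{2}{5}|T|$ while the old transmitters cost roughly $\tfrac12|T|$, the comparison closes uniformly in $k$. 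To repair your argument you would essentially need such a $\wh{\bdim}$-type bound anyway, in order to control the cost of re-distinguishing the $\Theta(k)$ vertices near $v_p$ whose representations collapse.
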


We begin with two lemmas.
In the proof of \cref{thm: paths}, Geneson and Yi proved the following useful fact, which we state here as a lemma. We include the proof for completeness.

\begin{lemma} [\cite{geneson2020broadcast}]
\label{lemma: at_most_one}
For every $n \in \mathds{Z}^+$ and every efficient resolving broadcast $f$ of $G\in \set{P_n,C_n}$, there is an efficient resolving broadcast $f'$ of $G$
with the following properties. 
\begin{enumerate}
\item Every vertex reached by $f$ is also reached by $f'$. 
\item For all $v\in V(G)$, we have $f'(v)\leq 1$.
\end{enumerate}
\end{lemma}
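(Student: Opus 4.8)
The plan is to prove the following slightly stronger statement, which immediately yields the lemma: there is a resolving broadcast $f'$ of $G$ with $f'(v) \le 1$ for every $v$, with $c_{f'}(G) \le c_f(G)$, and reaching every vertex reached by $f$. Indeed, $\bdim(G) \le c_{f'}(G) \le c_f(G) = \bdim(G)$ then forces $c_{f'}(G) = \bdim(G)$, so $f'$ is efficient.

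I would build $f'$ by \emph{flattening} all of the transmitters of $f$ simultaneously. Take $G = P_n$ (the cycle is analogous). For each $z \in \supp_G(f)$ with $k := f(z)$, the transmitter at $z$ reaches exactly the set $I_z$ of vertices within distance $k$ of $z$, and it separates $x, y \in I_z$ precisely when $d(x,z) \ne d(y,z)$. The goal is to pick a set $T_z$ of at most $k$ vertices such that (i) every vertex of $I_z$ lies within distance $1$ of $T_z$, and (ii) the vector $\bigl(d_1(v,w)\bigr)_{w \in T_z}$ is distinct for distinct $v \in I_z$ and differs from $\bigl(d_1(u,w)\bigr)_{w \in T_z}$ for every $u \in V(G) \setminus I_z$. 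When $z$ is not within distance $k$ of an endpoint of $P_n$, so that $I_z$ is an interval on $2k+1$ vertices, one can take $T_z$ to be the $k$ vertices, spaced two apart, that cover $I_z$ exactly; a direct check of the resulting $0/1/2$-valued vectors gives (i) and (ii). When $z$ is near an endpoint (or on $C_n$ when $2k+1 \ge n$) a slightly different choice of $T_z$ is needed, and when some single transmitter of $f$ already reaches all of $P_n$ — which happens only for small $n$ — the statement is checked directly. Finally, let $f'$ be the indicator function of $\bigcup_{z \in \supp_G(f)} T_z$.

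Then: $f'$ is $\{0,1\}$-valued and $c_{f'}(G) = \bigl|\bigcup_z T_z\bigr| \le \sum_z |T_z| \le \sum_z f(z) = c_f(G)$, so $f'$ is efficient; if $v$ is reached by $f$ then $v \in I_z$ for some $z$, hence $v$ is within distance $1$ of $T_z \subseteq \supp_G(f')$, so $f'$ reaches $v$; and for distinct $x, y$, any $z$ separating them for $f$ has at least one of $x, y$ in $I_z$, so by (ii) some $w \in T_z$ separates $x, y$ for $f'$ — hence $f'$ is a resolving broadcast. The crucial point that makes this work in one shot — so that no iteration or termination argument is needed — is that overlaps among the sets $T_z$ only decrease $c_{f'}(G)$. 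I expect the only real obstacle to be arranging (ii) uniformly: near the ends of $P_n$ (and on short cycles) the interval $I_z$ is small, and a careless choice of $T_z$ can fail to separate the two extreme vertices of $I_z$, so handling these boundary cases is the fiddly part of the proof.
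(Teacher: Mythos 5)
Your strategy is sound but genuinely different from the paper's. The paper does not flatten all transmitters at once; it performs an iterative local surgery: given a vertex $v_j$ with $f_i(v_j)=x>1$, it reduces the value there to $x-2$ and places value $1$ at the two vertices at distance $x-1$ from $v_j$ (with a special rule when $v_j$ is a leaf and $x=2$), taking maxima where assignments collide, and then argues termination via a monovariant on $\sum_{f_k(v)>1} f_k(v)$. Each step preserves reachability and resolution locally and never increases the total cost, so the limit is the desired $f'$. Your one-shot replacement of each strength-$k$ transmitter by $k$ unit transmitters spaced two apart avoids the termination argument entirely and makes the cost comparison $c_{f'}(G)\le\sum_z|T_z|\le c_f(G)$ transparent; the price is that you must verify your condition (ii) for every shape that $I_z$ can take, whereas the paper only ever reasons about one radius-$2$ shell at a time. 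Note also that your (ii) is strictly stronger than what $z$ itself provides (a strength-$k$ transmitter does \emph{not} separate the two vertices of $I_z$ equidistant from $z$ on opposite sides), which is harmless but means you are proving more than you need.

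The one substantive loose end is the boundary analysis you defer. It is not purely cosmetic: for example, if $z$ is a leaf of $P_n$ with $f(z)=2$, so $I_z=\{1,2,3\}$, the ``spaced two apart'' recipe would suggest a single transmitter at position $2$, which gives positions $1$ and $3$ the identical vector $(1)$ and also fails to separate position $1$ from position... well, from the out-of-range vertex $4$ had $I_z$ been longer; you must actually spend both available units (e.g.\ $T_z=\{2,3\}$ or $\{1,3\}$) to restore (ii). Since $|I_z|\le k+1+\min\{k,\,d(z,\text{end})\}\le 2k+1$, there is always enough budget, and the analogous wrap-around case on $C_n$ with $2k+1\ge n$ reduces to $\adim(C_n)=\lfloor(2n+2)/5\rfloor\le k$ for $n\ge 9$ plus a finite check; so the cases close, but they do need to be written out for the proof to be complete.
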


\begin{proof} Let $G$ be the path $v_1,\dots,v_{n}$ or the cycle $v_1,\dots,v_{n},v_1$.
Let $f_0$ be any efficient resolving broadcast of $G$. If $f_0(v)\leq 1$ for all $v\in V(G)$, then we are done. Otherwise, we repeatedly modify $f_i$ to obtain a new efficient resolving broadcast $f_{i+1}$ that satisfies the following monovariant: for integer $k$, let $U_k = \{v\in V(G): f_k(v)>1\} $ and $S_k = \sum_{v\in U_k} f_k(v)$, then $S_{i+1} < S_i$.

Let $v_j\in V(G)$ be any vertex with $x:=f_i(v_j)> 1$. If $v_j$ is a leaf and $x =2$, we set $f_{i+1}(v_j)=1$ and $f_{i+1}(u)= \max\{f_i(u), 1\}$, where $u$ is the vertex adjacent to $v_j$. %; we note that $f_i(u)\neq 0$ would contradict the efficiency of $f_i$.
Otherwise, we set $f_{i+1}(v_j) = x - 2$, and we let $u_1$ and $u_2$ be the vertices $v_{(j+x-1)\mod n}$ and $v_{(j-x+1)\mod n}$, respectively. %We note that the efficiency of $f_i$ implies that $f_{i}(u_1) = f_{i}(u_2)=0$.
We set $f_{i+1}(u_1)=\max\{f_i(u_1), 1\}$ and $f_{i+1}(u_2) =  \max\{f_i(u_2), 1\}$. 
The maximum value is used for vertices assigned multiple values for $f_{i+1}$, and $f_{i+1}(v) = f_i(v)$ for any vertex $v$ not assigned any value for $f_{i+1}$. 
This process will terminate after finitely many steps because of the monovariant on $S_i$, yielding a resolving broadcast that satisfies the description of $f'$. 
\end{proof}

The proof of \cref{lemma: hat_bdim} uses some ideas from observations made in \cite{buczkowski2003k} about the metric dimension of a wheel $W_n = C_n + K_1$ for integer $n\geq 3$. To state the lemma, we need the following definition.

\begin{definition}
For a graph $G$, the value $\wh{\bdim}(G)$ is the minimum of $\sum_{v\in V(G)} f(v)$ over all resolving broadcasts $f$ of $G$ such that every vertex $v \in V(G)$ is reached by at least one vertex $z\in \supp_G(f)$. This differs from $\bdim(G)$ because one vertex may be unreached by a resolving broadcast. 
\end{definition}

\begin{observation}
For all graphs $G$, we have $$\wh{\bdim}(G) = \bdim(G\cup K_1) \quad \text{and} \quad \bdim(G)\leq \wh{\bdim}(G) \leq \bdim(G) + 1.$$
\end{observation}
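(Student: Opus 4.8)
The plan is to prove the two claims separately, establishing the two-sided bound $\bdim(G)\le\wh{\bdim}(G)\le\bdim(G)+1$ first, since it will be used in the harder direction of the identity. The inequality $\bdim(G)\le\wh{\bdim}(G)$ is immediate: any function admissible in the definition of $\wh{\bdim}(G)$ is in particular a resolving broadcast of $G$, so minimizing the cost over this more restrictive class can only yield something at least $\bdim(G)$. For $\wh{\bdim}(G)\le\bdim(G)+1$, take an efficient resolving broadcast $f$ of $G$; by the remark preceding the statement, $f$ reaches every vertex except possibly one, say $v_0$. If $f$ reaches all vertices we are done; otherwise set $f'(v_0)=f(v_0)+1$ and $f'(v)=f(v)$ elsewhere. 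Increasing a value of $f$ (and, if $f(v_0)=0$, enlarging the support) cannot destroy the resolution of any pair, so $f'$ is still a resolving broadcast of $G$; since $f'(v_0)\ge 1\ge d(v_0,v_0)$, the vertex $v_0$ now reaches itself, hence $f'$ reaches every vertex, and $c_{f'}(G)=\bdim(G)+1$.

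For the identity $\wh{\bdim}(G)=\bdim(G\cup K_1)$, write $w$ for the isolated vertex of $G\cup K_1$ and record two elementary facts: (i) $d(w,z)=\infty$ for every $z\in V(G)$, so for such a $z$ in the support one has $d_{f(z)}(w,z)=f(z)+1$, whereas $d_{f(z)}(y,z)\le f(z)+1$ for $y\in V(G)$, with equality exactly when $z$ fails to reach $y$; and (ii) $w$, whether or not it lies in the support, resolves no pair of vertices both lying in $V(G)$. For $\bdim(G\cup K_1)\le\wh{\bdim}(G)$, take an optimal $f$ for $\wh{\bdim}(G)$ and extend it by setting $f(w)=0$. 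Pairs inside $V(G)$ are resolved as before, and for each $y\in V(G)$ some $z\in\supp_G(f)$ reaches $y$, so $d_{f(z)}(y,z)=d(y,z)\le f(z)<f(z)+1=d_{f(z)}(w,z)$ and $z$ resolves $\{w,y\}$; hence the extension is a resolving broadcast of $G\cup K_1$ of the same cost.

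For the reverse inequality $\wh{\bdim}(G)\le\bdim(G\cup K_1)$, take an optimal resolving broadcast $f$ of $G\cup K_1$; by fact (ii), its restriction $f|_{V(G)}$ is a resolving broadcast of $G$. If $f(w)=0$, then for each $y\in V(G)$ the pair $\{w,y\}$ is resolved by some $z\in\supp_G(f)\subseteq V(G)$, which by fact (i) forces $z$ to reach $y$; so $f|_{V(G)}$ reaches every vertex and is admissible for $\wh{\bdim}(G)$ at cost $\bdim(G\cup K_1)$. If instead $f(w)\ge 1$, then $f|_{V(G)}$ is a resolving broadcast of $G$ of cost $\bdim(G\cup K_1)-f(w)\le\bdim(G\cup K_1)-1$, so $\bdim(G)\le\bdim(G\cup K_1)-1$, and the sandwich bound gives $\wh{\bdim}(G)\le\bdim(G)+1\le\bdim(G\cup K_1)$. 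Either way the inequality holds, and combined with the previous paragraph this proves equality. The only mildly delicate point — the main obstacle, such as it is — is this case split on whether $f(w)$ is zero or positive: a positive value placed on the isolated vertex is wasted except that it trivially distinguishes $w$ from everything else, which is exactly why it can be traded against the unit of slack in $\wh{\bdim}(G)\le\bdim(G)+1$; all the remaining verifications are routine.
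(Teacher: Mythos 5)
Your proof is correct and complete. The paper states this result as an Observation with no proof supplied, so there is no argument to compare against; your verification — the sandwich bound via the ``at most one unreached vertex'' remark, and the two-directional translation between broadcasts of $G\cup K_1$ and all-reaching broadcasts of $G$, with the case split on whether the isolated vertex carries positive weight — is exactly the routine check the author evidently intended the reader to perform, and every step (in particular the facts that raising a value preserves resolution and that the isolated vertex resolves no pair inside $V(G)$) is justified correctly.
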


\begin{lemma}
\label{lemma: hat_bdim}
For every integer $n\geq 4$, we have $\wh{\bdim}(P_n) =\wh{\bdim}(C_n)= \floor{\frac{2n+3}{5}}$.
\end{lemma}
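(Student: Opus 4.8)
The plan is to prove the two-sided estimate $\wh{\bdim}(P_n)=\wh{\bdim}(C_n)=\floor{\frac{2n+3}{5}}$ by a combination of the known value $\bdim(P_n)=\bdim(C_n)=\floor{\frac{2n+2}{5}}$ from \cref{thm: paths} and a direct analysis of broadcasts in which \emph{every} vertex is reached. The skeleton is: (i) deduce easy inequalities relating $\wh{\bdim}$ to $\bdim$; (ii) settle the upper bound by exhibiting an explicit broadcast that reaches all vertices; (iii) settle the matching lower bound by a counting argument on how many vertices a single value-$k$ transmitter can help resolve.

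First I would record the trivial bounds. By \cref{lemma: at_most_one}, any efficient broadcast of a path or cycle can be replaced by one that is $\{0,1\}$-valued and reaches at least as many vertices; so we may as well work with $\{0,1\}$-valued broadcasts, i.e.\ ordinary adjacency resolving sets with possibly one unreached vertex, and $\wh{\bdim}$ corresponds to honest adjacency resolving sets. Since $\bdim(G)\le\wh{\bdim}(G)\le\bdim(G)+1$ in general, and $\bdim(P_n)=\floor{\frac{2n+2}{5}}$, we already know $\wh{\bdim}(P_n)\in\{\floor{\frac{2n+2}{5}},\floor{\frac{2n+2}{5}}+1\}$; note $\floor{\frac{2n+3}{5}}$ equals $\floor{\frac{2n+2}{5}}$ unless $n\equiv 1\pmod 5$, in which case it is one larger. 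So the whole content is: decide, for each residue class of $n\bmod 5$, whether the extra $+1$ is forced. I would handle $C_n$ and $P_n$ together wherever possible, using the standard ``blocks of five'' pattern $11000\,11000\cdots$ (place transmitters of range $1$ on two consecutive vertices, skip three) that underlies \cref{thm: paths_adj} and \cref{thm: paths}; the point is to check how much of a leftover block of size $n\bmod 5$ can be resolved \emph{with every vertex reached}, and to show an optimal pattern gives exactly $\floor{\frac{2n+3}{5}}$ reached-resolving transmitters.

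For the upper bound I would take the periodic pattern on $5\floor{n/5}$ vertices and append a short hand-designed pattern on the remaining $r=n\bmod 5$ vertices (resp.\ adjust one block for the cycle to account for wraparound), verifying in each of the five cases $r=0,1,2,3,4$ that every vertex is reached and distinguished, and that the total weight is $\floor{\frac{2n+3}{5}}$. For the lower bound I would argue that a vertex $z$ with $f(z)=1$ can reach only its (at most) two neighbors and itself, and that in a path/cycle the set of vertices ``controlled'' by a given support vertex — those whose broadcast representation is affected by $z$ — has bounded size, so that in any reaching resolving broadcast the transmitters must be spaced so that each block of five vertices carries total weight at least $2$, with a careful accounting of the boundary block showing the floor is $\floor{\frac{2n+3}{5}}$ rather than $\floor{\frac{2n+2}{5}}$; here I would reuse the exact counting lemma from the proof of \cref{thm: paths} in \cite{geneson2020broadcast} but tightened to forbid the single free unreached vertex. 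The main obstacle I anticipate is precisely this last tightening: the $\bdim$ lower bound proof ``spends'' its slack on one unreached vertex, and I must show that when that vertex is also required to be reached, the pigeonhole on leftover blocks of size $n\bmod 5$ costs one more unit exactly when $n\equiv1\pmod5$ — and, symmetrically, that it does \emph{not} cost an extra unit in the other four residue classes, which requires producing the explicit optimal reaching patterns rather than merely bounding from below.
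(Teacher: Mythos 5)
Your proposal is correct and follows essentially the same route as the paper: reduce to $\{0,1\}$-valued broadcasts via \cref{lemma: at_most_one}, note that only the case $n\equiv 1\pmod 5$ is at issue, handle the upper bound with an explicit periodic pattern that reaches every vertex, and handle the lower bound by a spacing/counting argument (the paper phrases it as ``no gap of three vertices, and a two-vertex gap forces its neighboring gaps to have one vertex,'' yielding $n\le 2k+\lfloor k/2\rfloor$, which is your ``weight at least $2$ per block of five'' in gap form).
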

\begin{proof}
Let $G$ be the path $v_1,\dots,v_{n}$ or the cycle $v_1,\dots,v_{n},v_1$.
First, we will show that $\wh{\bdim}(G) = \bdim(G)$ for $n\not\equiv 1 \pmod 5$. Define $g:V(G)\rightarrow \mathds{Z}^+ \cup \{0\}$ as follows: $g(v_i)$ is 1 if $i\equiv 2\pmod 5$ or  $i\equiv 4\pmod 5$ and 0 otherwise. Note that $g$ is a resolving broadcast of $G$ that achieves $\bdim(G)$ given in \cref{thm: paths} and that $g$ reaches all of the vertices of $G$ when $n\not\equiv 1 \pmod 5$.

Now, we will show that $\wh{\bdim}(G)= \bdim(G)+1$ for $n\equiv 1 \pmod 5$. 
Let $n= 5x + 1$ for some positive integer $x$; then, we have $\bdim(G) = \floor{\frac{10x+4}{5}}=2x$.
It suffices to show that for any efficient resolving broadcast $f$ of $G$, there is a vertex not reached by $f$. 
By \cref{lemma: at_most_one}, there is an efficient resolving broadcast $f'$ of $G$ with $f'(v)\leq 1$ for all $v\in V(G)$ that reaches all of the vertices reached by $f$. For the sake of contradiction, we assume that $f'$ reaches all of the vertices, and so there is no vertex $v\in V(G)$ with $b_{f'}(v) = \mathbf{2}$.

A \textit{gap} of graph $G$ is a maximal connected subgraph of $G$ that only consists of vertices that are not in $\supp_G(f')$. If two gaps are adjacent to the same vertex in $\supp_G(f')$, then we call them \textit{neighboring gaps}.
No gap can contain three vertices, since the vertex in the middle of the gap would have broadcast representation $\mathbf{2}$. Additionally, any neighboring gap of a gap that contains two vertices must contain only one vertex, since otherwise there exists five consecutive vertices of $G$ where the vertex $m$ in the middle is the only one in $\supp_G(f')$, and the two vertices adjacent to $m$ would have the same broadcast representation.

If $G$ is $C_n$, then of the $\bdim(G)$ gaps, at most $\floor{\frac{\bdim(G)}{2}}$ gaps contain two vertices, and none contain three vertices. Thus, $n\leq  2\bdim(C_n) + \floor{\frac{\bdim(C_n)}{2}} =5x$. Similar reasoning yields $n\leq 5x$ if $G$ were instead $P_n$. Since $G$ is a graph of order $5x+1$, we have reached a contradiction.
\end{proof}

With the above lemma, we are now able to prove \cref{prop: my_paths_cycles}.

\begin{proof}[Proof of \cref{prop: my_paths_cycles}]
Let $G$ be the path $v_1,\dots,v_{n}$ or the cycle $v_1,\dots,v_{n},v_1$, and let $f$ be an efficient resolving broadcast of $G$.
If $n\leq 6$, then $\bdim(G) \leq 2$ by \cref{thm: paths}, so $f(v) \leq 2$ for all $v\in V(G)$. Thus, we consider $n\geq 7$.%, which implies that $\bdim(G) = \floor{\frac{2n+2}{5}}< \frac{n}{2}$. 

Let $v_i = \argmax_{v\in V(G)}(f(v))$.
For the sake of contradiction, we assume that $f(v_i)\geq 3$. 
If vertex $v_i$ were a leaf (say $i=1$), then a function $g$ that is identical to $f$, except with $g(v_3)=f(v_1)-2$ and $g(v_1)= 1$, is a resolving broadcast of $G$ with 
$c_g(G) < c_f(G)$, contradicting the efficiency of $f$. Thus, $v_i$ has two neighbors. At least one of the neighbors of $v_i$ must be reached by some other vertex $v_j\neq v_i$ or else the two neighbors of $v_i$ would not be distinguished. 

First, we will show that $f$ is inefficient if $f(v_j)\geq 2$. Let $T$ be the set of vertices that are reached by $v_i$ or  $v_j$. Note that $|T|\leq 2f(v_i)+f(v_j)+2$. By \cref{lemma: hat_bdim}, the vertices in $T$ can be reached and distinguished with a total cost of $\floor{\frac{2|T|+3}{5}}$, which is less than $f(v_i) + f(v_j)$ when $f(v_i) \geq 3$ and $f(v_j)\geq 2$.

Thus, we must have $f(v_j)=1$, so
%Let $Q$ be the set of vertices reached by vertex $v_i$ or $v_j$. 
%Note that if $|Q| \leq f(v_i)$, then $n\leq f(v_i)$, and this contradicts the optimality of $f$. Thus, we have $|Q| \geq f(v_i) + 1 \geq 4$ and
$|T|\leq 2f(v_i)+1$ since $v_j$ cannot reach any vertex that $v_i$ does not reach. 
By \cref{lemma: hat_bdim}, the vertices in $T$ can be reached and distinguished with a total cost of $$\floor{\frac{2|T|+3}{5}}\leq \frac{4f(v_i)+5}{5} < f(v_i)+1 = f(v_i)+f(v_j).$$
%Since the cost of resolving the vertices in $T$ with $f$ is at least $f(v_i)+f(v_j)$, 
This contradicts the efficiency of resolving broadcast $f$. 
\end{proof}

\section{Results on Acyclic Graphs}
\label{Section: Acyclic}

In this section, we discuss some results on the broadcast dimension of acyclic graphs, and we prove \cref{thm: asymptotic_lower_bound1}. We make use of standard terminology for trees: a \textit{major vertex} in a tree $T$ is a vertex of degree at least three, and a \textit{leaf} of $T$ is a vertex of degree one. 

For any graph $G$, showing that a function $g:V(G)\rightarrow \mathds{Z}^+ \cup \{0\}$ is a resolving broadcast of $G$ gives an upper bound of $c_g(G)$ on $\bdim(G)$. 
On the other hand, obtaining a nice lower bound on $\bdim(G)$ is oftentimes less straightforward. 
The result on twin vertices from \cref{obs: twin} is a useful tool for lower bounding $\bdim(G)$. 
In this section, we use a different approach to derive a lower bound on the broadcast dimension of trees: we consider the number of unique broadcast representations of the vertices of a tree $T$ with respect to various functions $f:V(T)\rightarrow \mathds{Z}^+ \cup \{0\}$. This motivates the following definition.

\begin{definition}
For a graph $G$ of order $n$ and a function $f:V(G)\rightarrow \mathds{Z}^+ \cup \{0\}$, we say that $B_G(f)$ is the number of unique broadcast representations of the vertices of $G$ with respect to $f$. That is,
$$B_G(f) =\abs{\set{b_f(v) \mid v\in V(G)}}.$$ Note that $B_G(f)=n$ if and only if $f$ is a resolving broadcast of $G$.
\end{definition}

%We prove an improved asymptotic lower bound $\Omega(\sqrt{n})$ on the broadcast dimension of acyclic graphs and show that this lower bound is asymptotically sharp. Prior to our work, the best lower bound on the broadcast dimension of acyclic graphs was the $\Omega(\log{n})$ lower bound from \cref{thm: lowerbound_adim_bdim} obtained by Geneson and Yi \cite{geneson2020broadcast}.

%A leaf $u$ of $T$ is a vertex of degree one.
%is called a \textit{terminal vertex} of a major vertex $v$ if $d(u,v)<d(u,w)$ for every other major vertex $w$ in $T$. The \textit{terminal degree} of a major vertex $v$ is the number of terminal vertices of $v$. An \textit{exterior major vertex} is a major vertex that has positive terminal degree. Let $\sigma(T)$ denote the number of leaves of $T$, and let $ex(T)$ denote the number of exterior major vertices of $T$.

%[insert theorem about dim(T) from paper]

The following lemma will be useful in the proof of \cref{thm: acyclic_lower_bound}.
\begin{lemma}
\label{lemma: reachable_from_a}
Let $T$ be a tree with resolving broadcast $f$, and let $a, b, v, x\in V(T)$ such that the following inequalities hold:
\begin{align*}
& f(a)-d(a,x)\geq f(v)-d(v,x),\\
& f(b)-d(b,x) \geq f(v)-d(v,x), \\
& f(a)-d(a,v)\geq f(b)-d(b,v).
\end{align*}
Then every vertex of $T$ that is reached by both $b$ and $v$ is also reached by $a$.
\end{lemma}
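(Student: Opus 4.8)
The plan is to work inside the tree $T$, where distances behave like a metric on a path: for any four vertices, the distances are governed by their positions relative to the unique paths joining them. Since we want to show every vertex $w$ reached by both $b$ and $v$ is reached by $a$, it suffices to establish $f(a) - d(a,w) \geq 0$ assuming $f(b) - d(b,w) \geq 0$ and $f(v) - d(v,w) \geq 0$. The natural strategy is to relate $d(a,w)$ to the hypothesized quantities $d(a,x)$, $d(a,v)$, $d(b,x)$, $d(b,v)$ by locating where the paths $a$–$w$, $b$–$w$, $v$–$w$, and the reference vertex $x$ all branch off from each other.

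First I would set up the combinatorial skeleton. In a tree, given vertices $a, b, v, w$, consider the Steiner-type structure they span; in particular, let $p$ be the vertex on the path from $v$ to $w$ closest to $a$ (equivalently the "median" of $a$, $v$, $w$), and similarly let $q$ be the median of $b$, $v$, $w$. The key identities are of the form $d(a,w) = d(a,p) + d(p,w)$, $d(v,w) = d(v,p) + d(p,w)$, $d(a,v) = d(a,p) + d(p,v)$, and analogous ones through $q$. The goal is to chain the three hypotheses: the third hypothesis $f(a) - d(a,v) \geq f(b) - d(b,v)$ lets me trade $a$ for $b$, and then one of the first two hypotheses (through the vertex $x$, which plays the role of anchoring the relative positions) lets me bound the remainder. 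I would also use the triangle-type inequality $d(b,w) \le d(b,\cdot) + d(\cdot, w)$ along the relevant paths to convert the bound on $f(b) - d(b,w) \ge 0$ into what's needed.

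Concretely, the computation I expect is: write $f(a) - d(a,w) = \big(f(a) - d(a,v)\big) + \big(d(a,v) - d(a,w) + d(v,w)\big) + \big(f(v) - d(v,w)\big) - f(v)$, no — rather, I would aim for an inequality of the shape
\begin{align*}
f(a) - d(a,w) &\geq \min\{f(b) - d(b,w),\ f(v) - d(v,w)\} \geq 0,
\end{align*}
by a careful case analysis on the relative order of the branch points along the path from $v$ to $w$ (and from $b$ to $w$). The role of $x$ is to supply, via the first two hypotheses, the "slack" comparing $a$ and $b$ to $v$ near $x$, which transfers to a comparison near $w$ because in a tree the overlap of two paths is itself a path. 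The cleanest route is probably to split on whether the median of $a,b,w$ lies on the $v$–$w$ side or the $v$–$x$ side of the median of $b,v,w$, and in each case combine exactly two of the three hypotheses with one tree-distance identity.

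The main obstacle will be the case analysis: a tree with four or five distinguished vertices ($a,b,v,x$ plus the target $w$) has several topologically distinct configurations of branch points, and I will need to verify the chain of inequalities in each. I expect most cases to collapse quickly using $d(a,v) - d(b,v) \le f(a) - f(b)$ together with $d(b,w) \le d(b,v) + d(v,w)$-type bounds, but the configuration where the path $a$–$w$ diverges from $v$–$w$ on the $x$ side (so that the $x$-hypotheses are the binding ones) is the delicate one and is presumably exactly why the hypothesis involving $x$ was included. I would handle that case by writing $d(a,w) - d(a,x) = d(v,w) - d(v,x)$ (valid when the divergence point lies between $x$ and $v$ appropriately), substituting into the first hypothesis, and reading off $f(a) - d(a,w) \ge f(v) - d(v,w) \ge 0$.
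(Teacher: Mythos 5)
Your overall strategy is the right one and is essentially the strategy the paper uses: fix a target vertex $w$ reached by both $b$ and $v$, reduce everything to the branch points of the relevant paths in the tree, and chain the hypotheses --- in particular, you correctly identify the key maneuver that when $x$ sits on the ``$a$ side'' the first hypothesis is useless on its own, and one must instead combine the second hypothesis (comparing $b$ and $v$) with the third (trading $a$ for $b$). The paper organizes the same idea slightly differently: it first splits into four cases according to the topology of $\{a,b,v\}$ (no common path through all three, versus each of the three possible betweenness orders), derives at the common branch vertex $c$ the two inequalities $f(a)-d(a,c)\geq f(b)-d(b,c)$ and $f(a)-d(a,c)\geq f(v)-d(v,c)$ (the latter by a sub-case on whether the $x$--$a$ path passes through $c$), and then observes that any $w$ reached by both $b$ and $v$ has at least one of its paths to $b$ or to $v$ passing through $c$.

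That said, what you have written is a plan rather than a proof: the case analysis that constitutes the entire content of the argument is explicitly deferred (``I will need to verify the chain of inequalities in each''), and the one case you do work out concretely rests on a step that fails in general. The identity $d(a,w)-d(a,x)=d(v,w)-d(v,x)$ holds if and only if $x$ and $w$ attach to the $a$--$v$ path at the same vertex, because $p\mapsto d(a,p)-d(v,p)$ is strictly monotone along that path; it is not valid merely because ``the divergence point lies between $x$ and $v$,'' so the delicate case you single out is not actually handled. Likewise, the strengthened target $f(a)-d(a,w)\geq\min\{f(b)-d(b,w),\,f(v)-d(v,w)\}$ appears to be true (it can be read off from the paper's branch-vertex inequalities) but is asserted, not proved. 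To close the gap you would need to carry out the configuration analysis in full, e.g.\ along the lines of the paper's four cases, verifying in each how the position of $x$ determines which two of the three hypotheses to chain.
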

\begin{proof}
We consider four possible orientations of the vertices $a$, $b$, and $v$ (see \cref{figure: Cases}). 

\noindent\textbf{Case 1.} There is not a path in $T$ through vertices  $a$, $b$, and $v$.\\
Let $c$ be the major vertex of $T$ such that the path from $c$ to $a$, the path from $c$ to $b$, and the path from $c$ to $v$ do not share any edges. 
%Let $y$ be the vertex on one those three paths closest to $x$ (possibly $y=x$).  

In this case, $f(a)-d(a,v)\geq f(b)-d(b,v)$ implies that 
\begin{equation}
\label{eq:1}
f(a)-d(a,c)\geq f(b)-d(b,c).
\end{equation}

%The inequality $f(a)-d(a,x)\geq f(v)-d(v,x)$ implies $f(a)-d(a,y)\geq f(v)-d(v,y)$. 
%, which implies $f(a)-d(a,c)\geq f(v)-d(v,c)$, so any vertex that $v$ reaches and has a path to $v$ that goes through $c$ can be reached by $a$. 
If the path from $x$ to $a$ does not go through $c$, then both the path from $x$ to $b$ and the path from $x$ to $v$ must pass through $c$, so $f(b)-d(b,x) \geq f(v)-d(v,x)$ implies that $f(b)-d(b,c) \geq f(v)-d(v,c)$. Combining this inequality with \eqref{eq:1}, we have
\begin{equation} \label{eq:2}
f(a)-d(a,c)\geq f(v)-d(v,c).
\end{equation}
Alternatively, if the path from $x$ to $a$ does go through $c$, then $f(a)-d(a,x)\geq f(v)-d(v,x)$ directly implies \eqref{eq:2}. 
Thus, the inequality in \eqref{eq:2} holds no matter where vertex $x$ is.

%Consider any vertex $u\in V(T)$ that is reached by both $b$ and $v$.
%Vertex $u$ must have a path to $b$ that goes through $c$ or a path to $v$ that goes through $c$. 
The inequality in \eqref{eq:1} shows that any vertex reached by $b$ with a path to $b$ that goes through $c$ is reached by $a$.
Similarly, the inequality in \eqref{eq:2} shows that
any vertex reached by $v$ with a path to $v$ that goes through $c$ is reached by $a$. 
Thus, any vertex that is reached by both  $b$ and $v$ is also reached by $a$.

\noindent\textbf{Case 2.} $d(a,v)+d(v,b) = d(a,b)$.\\
%Let $y$ be the vertex on the path from $a$ to $b$ that is closest to $x$. 
If the path from $x$ to $b$ does not go through $v$, then 
\begin{align*}
f(a)-d(a,x)\geq f(v)-d(v,x)
&\implies f(a)\geq d(a,v) + d(v,x)+ f(v)-d(v,x)\\
&\implies f(a)-d(a,v) \geq  f(v).
\end{align*}
Alternatively, if the path from $x$ to $b$ does go through $v$, then replacing $a$ with $b$ in the above inequalities, we get $f(b)-d(b,v) \geq  f(v)$, which implies that $f(a)-d(a,v)\geq f(v)$.

Thus, no matter where vertex $x$ is, we have $f(a)-d(a,v) \geq  f(v)$, which shows that $a$ reaches all of the vertices reached by $v$. 

\noindent\textbf{Case 3.} $d(b,a)+d(a,v) = d(b,v)$.

\noindent\textbf{Case 4.} $d(a,b)+d(b,v) = d(a,v)$.

It is easy to see that the lemma is true for Cases 3 and 4 by direct observation or by performing analysis similar to the analysis shown for Cases 1 and 2.
\end{proof}

\begin{theorem}
\label{thm: acyclic_lower_bound}
For all trees $T$ of order $n$, we have ${\bdim}(T) \geq  \sqrt{\frac{n}{6}}.$
\end{theorem}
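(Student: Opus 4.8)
The plan is to fix an efficient resolving broadcast $f$ of $T$ with $c_f(T) = \bdim(T) =: c$, and to bound the number of vertices $n = B_T(f)$ that can possibly be distinguished. The natural way to organize this is to count broadcast representations ``by coordinate.'' For each $z \in \supp_T(f)$, a vertex $v$ contributes a coordinate value in $\{0,1,\dots,f(z)\}$ if $z$ reaches $v$, and the value $f(z)+1$ otherwise. So the vertices not reached by $z$ all look identical in that coordinate, and the vertices reached by $z$ split into at most $f(z)+1$ classes. The whole vector $b_f(v)$ is determined by (i) which set $R(v) \subseteq \supp_T(f)$ of broadcast vertices reach $v$, and (ii) for each $z \in R(v)$, the exact distance $d(v,z) \le f(z)$. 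Since $f$ is resolving, all $n$ of these vectors are distinct. The goal is to show $n \le 6c^2$, i.e. $c \ge \sqrt{n/6}$.

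First I would handle the ``reach'' structure. Let $k = |\supp_T(f)|$ and let $z_1, \dots, z_k$ be the broadcast vertices with values $a_i = f(z_i) \ge 1$, so $\sum a_i = c$ and $k \le c$. The number of vertices reached by $z_i$ is at most $2a_i + 1$ (a tree ball of radius $a_i$ around $z_i$ contains at most $2a_i+1$ vertices only in the path case; in general it can be much larger, so this bound is \emph{false} and this is exactly where \cref{lemma: reachable_from_a} must enter). The real content of \cref{lemma: reachable_from_a} is that the ``reach profiles'' of distinct broadcast vertices cannot be too independent: given three broadcast vertices, under the stated inequalities one vertex's reach set contains the intersection of the other two. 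I would use this to argue that at most one broadcast vertex can reach ``many'' vertices that another does not; more precisely, I expect to show that the vertices lying in at least two of the balls $B(z_i, a_i)$ are all contained in a single bounded-size set, forcing the reached region to decompose into roughly $k$ ``private'' pieces each of linear size $O(a_i)$, plus a shared core. That yields that the total number of reached vertices is $O(c)$ — say at most $2c+1$ — which together with the one possibly-unreached vertex gives $n \le 2c+2$...

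...but that would be far \emph{stronger} than $n \le 6c^2$, so I have overshot: the reach argument cannot be that strong, because a single high-power broadcast vertex $z$ with $f(z) = a$ distinguishes only $a+1$ vertices inside its huge ball (those at distances $0,1,\dots,a$), yet the ball itself may contain exponentially many vertices, \emph{all of which must then be distinguished by other coordinates}. So the correct accounting is: the number of distinct vectors is at most $\prod_{i}(a_i + 2)$ crudely, but that is exponential and useless. The right move is to combine the reach-geometry of \cref{lemma: reachable_from_a} with a \emph{quadratic} counting argument: the reached vertices lie within distance $a_i$ of $z_i$, and I expect to show via the lemma that the union of all balls, viewed as a subtree, has at most $O(c)$ leaves and total ``branch length'' $O(c)$, so it has $O(c^2)$ vertices — each branch of length $O(a_i)$ can be attached at $O(c)$ places. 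The key step, and the main obstacle, is turning \cref{lemma: reachable_from_a} into the statement ``the subtree spanned by the reached vertices has $O(\sqrt{c})$... no — $O(c)$ leaves and $O(c)$ total length, hence $O(c^2)$ vertices, hence $n \le 6c^2$.'' I would set up the leaf count by assigning to each leaf of this spanned subtree a distinct broadcast vertex (its ``nearest dominating'' $z_i$, extracted from the lemma's conclusion), bound the length of each root-to-leaf branch by $\max_i a_i \le c$, and carefully track the constant to land at $6$.

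Concretely, the steps in order: (1) reduce to an efficient $f$ and set notation $k, a_i, c$; (2) describe $b_f(v)$ as (reach set, distance tuple) and note distinctness; (3) prove, using \cref{lemma: reachable_from_a} repeatedly, a ``domination'' statement that lets each connected piece of the reached region be charged to a broadcast vertex so that the spanned subtree $T'$ has at most $2k \le 2c$ leaves; (4) bound the distance from any vertex of $T'$ to its charging broadcast vertex by $\max a_i$, so every root-to-leaf path in $T'$ has length at most $\sum a_i = c$, giving $|V(T')| \le 2c \cdot c = 2c^2$ plus lower-order terms that I would absorb to reach $6c^2$; (5) add the single unreached vertex. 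The delicate point throughout is that \cref{lemma: reachable_from_a} only applies under three inequalities on the $f$-values and distances, so step (3) requires choosing the comparison vertex $v$, $x$ inputs to the lemma carefully — picking $x$ to be a meeting point of paths and $v$ to range over candidate dominators — and verifying those inequalities hold in the relevant configuration; I expect that to be the bulk of the work.
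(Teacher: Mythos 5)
Your final plan --- bound the subtree spanned by the reached vertices by (number of leaves) times (branch length), with both factors $O(c)$ --- is a genuinely different route from the paper's, but it has a hole at exactly the step you flag as ``the main obstacle,'' and \cref{lemma: reachable_from_a} does not close it. Since a resolving broadcast reaches all but at most one vertex, the spanned subtree is essentially $T$ itself, so your step (3) amounts to the claim that $T$ has at most $2c$ leaves. Such a bound cannot come from reachability alone (a star $K_{1,m}$ with value $1$ on the center reaches everything at cost $1$ and has $m$ leaves), so any proof must invoke the distinguishing requirement; but your proposed charging --- each leaf to its ``nearest dominating'' broadcast vertex --- is never shown to be injective, and nothing in your sketch rules out many leaves being charged to the same high-value broadcast vertex sitting at the center of a bushy subtree. \cref{lemma: reachable_from_a} is a statement that, under three inequalities, one reach set contains the intersection of two others; it says nothing about how many leaves a single ball can contain, so it is not the right tool for a leaf count. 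The claim may well be true, but it needs its own argument of comparable difficulty to the whole theorem, and you have not supplied one. (Your depth bound is salvageable: each root-to-leaf path meets the ball of $z_i$ in at most $2f(z_i)+1$ vertices, giving length at most $3c$.)

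The paper sidesteps this entirely by counting broadcast representations rather than tree vertices. It fixes a reference vertex $x$, orders $\supp_T(f)$ by decreasing $f(v)-d(v,x)$, and activates the values one at a time, showing that activating $v$ increases the number of distinct representations $B_T(f')$ by at most $(f(v)+1)(2f(v)+1)\le 6\paren{f(v)}^2$: among the vertices $v$ reaches, the new coordinate takes at most $f(v)+1$ values, and the previously activated coordinates contribute at most a factor $2f(v)+1$ because, by \cref{lemma: reachable_from_a} together with a case analysis on the relative position of $a$, $b$, $v$, only the single strongest previously activated overlapping vertex $a=\argmax_{u\in W(v)}(f'(u)-d(u,v))$ matters and the others can be zeroed out without decreasing the increment. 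Summing, $n\le 1+\sum_v 6\paren{f(v)}^2\le 6\,c_f(T)^2$. This is where the ordering by $f(v)-d(v,x)$ and the lemma's three hypotheses are actually used --- to discount redundant coordinates in a local count --- not to control the global shape of the tree. If you want to pursue your structural version, the leaf bound is the piece you must actually prove.
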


\begin{proof}
Let $T$ be a tree of order $n$, and let $f$ be any resolving broadcast of $T$. We define $f': V(T)\rightarrow \mathds{Z}^+ \cup \{0\}$ such that $f'(v) = 0$ for all $v\in V(T)$. Note that $B_T(f') = 1$.
Let $x\in V(T)$ be any vertex. We order the vertices in $\supp_T(f)$ so that vertex $v\in \supp_T(f)$ comes before vertex $u\in \supp_T(f)$ in the ordering only if $f(v)-d(v,x)\geq f(u) -d(u,x)$.
We update the value of $f'(v)$ from 0 to $f(v)$ (notationally, $f'(v)\leftarrow f(v)$) one vertex $v\in \supp_T(f)$ at a time in the defined order until $f' = f$, and we consider the increase in $B_T(f')$ on each update.

\begin{figure}[t]
\centering
\begin{subfigure}{.5\textwidth}
  \centering
  \includegraphics[scale=.5]{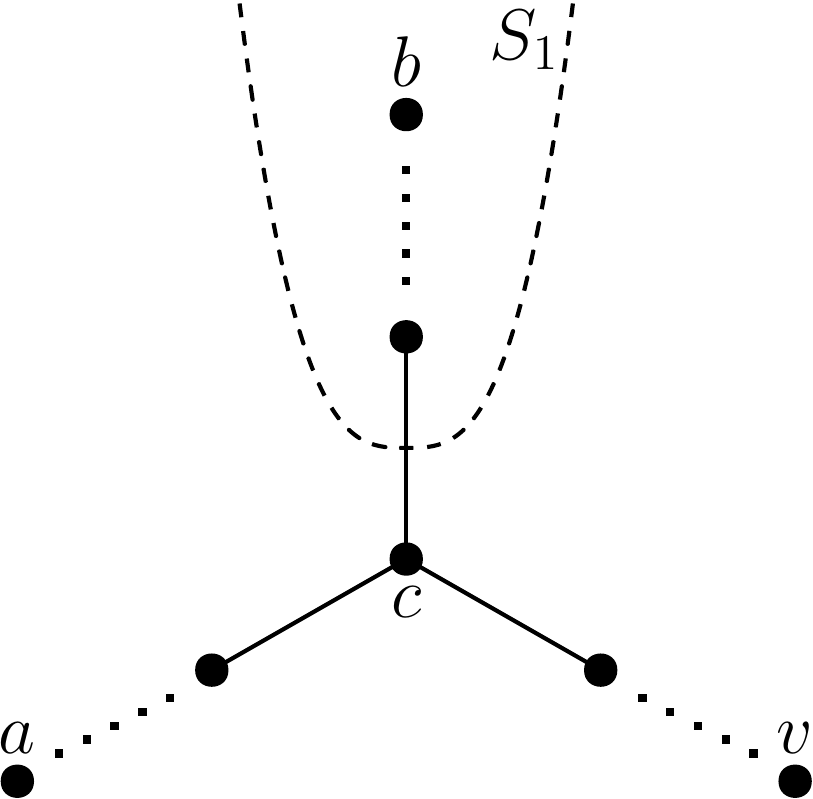}
\end{subfigure}%
\begin{subfigure}{.5\textwidth}
  \centering
  \includegraphics[scale=.5]{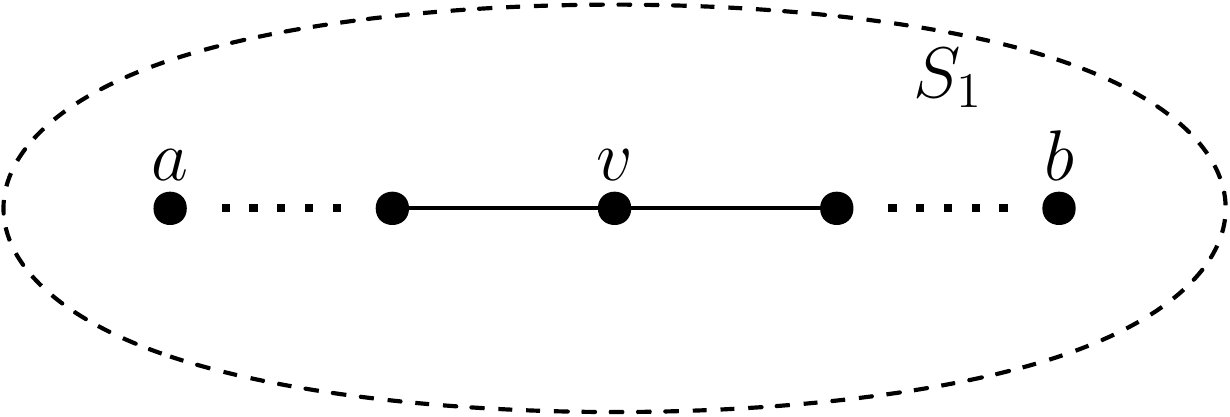}
\end{subfigure}
\begin{minipage}{.5\textwidth}
  \centering
  \vspace*{.3cm}
  Case 1
    \vspace*{.8cm}
\end{minipage}%
\begin{minipage}{.5\textwidth}
  \centering
    \vspace*{.3cm}
  Case 2
    \vspace*{.8cm}
\end{minipage}
%%%%%%%%%%%%%%%%%%%
\begin{subfigure}{.5\textwidth}
  \centering
  \includegraphics[scale=.5]{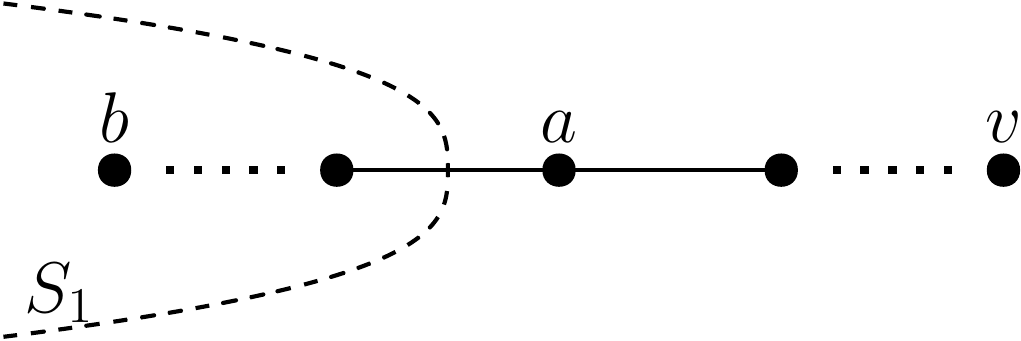}
\end{subfigure}%
\begin{subfigure}{.5\textwidth}
  \centering
  \includegraphics[scale=.5]{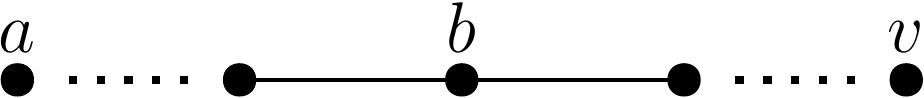}
\end{subfigure}
\begin{minipage}{.5\textwidth}
  \centering
  \vspace*{.3cm}
  Case 3
\end{minipage}%
\begin{minipage}{.5\textwidth}
  \centering
    \vspace*{.3cm}
  Case 4
\end{minipage}
\caption{The four cases from the proof of \cref{lemma: reachable_from_a} and the proof of \cref{thm: acyclic_lower_bound}. Note that all vertices may have larger degree than what is shown.
Any non-pictured vertex of the tree that is in $S$ (defined in the proof of \cref{thm: acyclic_lower_bound}) and adjacent to a vertex in $S_1$ is also in $S_1$.}
\label{figure: Cases}
\end{figure}

%Consider the change in the function $B_T(f')$  after setting $f'(v) \leftarrow f(v)$ for some vertex $v$ with $f(v) > 0$. 
For a vertex $v\in \supp_T(f)$, let $W(v)$ be the set of vertices that can reach (with respect to $f'$) at least one vertex $u\in V(T)$ that is reached by $v$ (with respect to $f$). That is, 
\begin{equation*}
W(v) = \set{w \mid w\neq v, w\in \supp_T(f'), u\in V(T), f'(w)\geq d(u,w), f(v)\geq d(u,v)}.
\end{equation*}

If $W(v) = \emptyset$, then updating $f'(v) \leftarrow f(v)$ increases $B_T(f')$ by at most $f(v) + 1$, which is upper bounded by $2\paren{f\paren{v}}^2$ since $f(v) \geq 1$. 

If $|W(v)| = 1$, then we can make the following observations about the broadcast representation $b_{f'}(u)$ of any vertex $u$ reached by $v$ after the update $f'(v) \leftarrow f(v)$. There are $f(v) + 1$ possible values for the entry of $b_{f'}(u)$ corresponding to vertex $v$ and $2f(v) + 1$ possible values for the entry of $b_{f'}(u)$ corresponding to the vertex in $W(v)$. The rest of the entries of $b_{f'}(u)$ must be the maximal possible value for that entry.
Thus, $B_T(f')$ increases by at most $\paren{f(v) + 1}\paren{2f(v) + 1}\leq 6\paren{f\paren{v}}^2$ in this case.

Now we consider $|W(v)| > 1$. Let $a=\argmax_{u\in W(v)}\paren{f'(u) - d(u,v)}$ and $b\in W(v)-\{a\}$.
Let $\delta \geq 0$ such that the update $f'(v)\leftarrow f(v)$ increases $B_T(f')$ by $\delta$. 

\noindent\textbf{Claim.}  
If $f'(b)$ were instead zero, then the update $f'(v)\leftarrow f(v)$ would still increase $B_T(f')$ by at least $\delta$. 
\begin{proof}[Proof of claim.]
Let $S$ be the set of vertices reached by both $b$ and $v$, and let $S_0 = V(T) - S$. 
We consider four possible orientations of the vertices $a$, $b$, and $v$ (see \cref{figure: Cases}), and we show that, in each case, the vertices in $S$ can be split into two (possibly empty) sets $S_1$ and $S_2$ such the three properties listed below are satisfied. 
Note that showing this proves the claim. 

\begin{minipage}{30pt}
$\text{ }$
\end{minipage}%
\begin{minipage}{.9\textwidth}
\begin{enumerate}
\item [Property 1.] Before updating $f'(v)\leftarrow f(v)$, every vertex in $S_1$ has a different broadcast representation from every vertex in $V(T) - S_1$.
\item[Property 2.] Updating $f'(v)\leftarrow f(v)$ does not increase $\abs{\set{b_{f'}(v) \mid v\in S_1}}$.
\item[Property 3.]  If $f'(b)$ were instead zero, updating $f'(v)\leftarrow f(v)$ would increase $\abs{\set{b_{f'}(v) \mid v\in S_2\cup S_0}}$ by at least $\delta$.
\end{enumerate}
\end{minipage}

Since we made the updates $f'(a)\leftarrow f(a)$ and $f'(b)\leftarrow f(b)$ before the update $f'(v)\leftarrow f(v)$, we have $f(a)-d(a,x)\geq f(v)-d(v,x)$ and $f(b)-d(b,x) \geq f(v)-d(v,x)$. Because of the way we chose vertex $a$, we have $f(a)-d(a,v)\geq f(b)-d(b,v)$.
Thus, by \cref{lemma: reachable_from_a}, vertex $a$ also reaches all of the vertices in $S$. 

Because every vertex in $S_0$ is not reached by $b$ or not reached by $v$, the increase in $\abs{\set{b_{f'}(v) \mid v\in S_0}}$ after updating $f'(v)\leftarrow f(v)$ would be at least the same if $f'(b)$ were instead zero. In all four cases, if $S_1=\emptyset$, Properties 1 and 2 are trivially satisfied, and if $S_2 = \emptyset$, Property 3 is trivially satisfied.

\noindent\textbf{Case 1.} There is not a path in $T$ through vertices  $a$, $b$, and $v$.\\
Let $c$ be the major vertex of $T$ such that the path from $c$ to $a$, the path from $c$ to $b$, and the path from $c$ to $v$ do not share any edges. Let $S_1$ be the set of vertices in $S$ with a path to $b$ that does not go through $c$, and $S_2 = S-S_1$. Let $u_1\in S_1$. All other vertices with distance $d(u_1,a)$ to $a$ and $d(u_1,b)$ to $b$ are also in $S_1$ (Property 1) and have the same distance $d(u_1,a) - d(a,c)+d(c,v)$ to vertex $v$ (Property 2). Let $u_2\in S_2$. All of the vertices in $S_2$ that are distance $d(u_2,a)$ to vertex $a$ and distance $d(u_2,v)$ to vertex $v$ have the same distance
%$\max\{d(u_2,a) - d(a,c), d(u_2,v) - d(v,c)\}+d(c,b)$ 
to vertex $b$ (Property 3).

\noindent\textbf{Case 2.} $d(a,v)+d(v,b) = d(a,b)$.\\
Let $S_1=S$ and $S_2 = \emptyset$. Let $u_1\in S_1$. All other vertices that are distance $d(u_1,a)$ from vertex $a$ and distance $d(u_1,b)$ from vertex $b$ are also in $S_1$ (Property 1) and are the same distance from vertex $v$ (Property 2). Property 3 is trivially satisfied.

\noindent\textbf{Case 3.} $d(b,a)+d(a,v) = d(b,v)$.\\
Let $S_1$ be the set of vertices in $S$ with a path to $b$ that does not go through $a$, and $S_2=S-S_1$. Let $u_1\in S_1$. All other vertices with distance $d(u_1,a)$ to $a$ and $d(u_1,b)$ to $b$ are also in $S_1$ (Property 1), and they all have the same distance $d(u_1,a) + d(a,v)$ to vertex $v$ (Property 2).
Let $u_2\in S_2$. All of the vertices in $S_2$ that are distance $d(u_2,a)$ to vertex $a$ have the same distance $d(u_2,a) + d(a,b)$ to vertex $b$ (Property 3).

\noindent\textbf{Case 4.} $d(a,b)+d(b,v) = d(a,v)$.\\
%Let  $S_1$ be the set of vertices in $S$ with a path to $a$ that does not go through $b$, and $S_2=S-S_1$. Let $u_1\in S_1$. All other vertices with distance $d(u_1,a)$ to $a$ and $d(u_1,b)$ to $b$ are also in $S_1$ (Property 1), and they all have the same distance $d(u_1,b)+d(b,v)$ to vertex $v$ (Property 2). 
Let $S_1=\emptyset$ and $S_2 = S$. Properties 1 and 2 are trivially satisfied.
Let $u_2\in S_2$. All of the vertices with distance $d(u_2,a)$ to vertex $a$ and distance $d(u_2,v)$ to vertex $v$ have the same distance 
%$d(a,u_2)-d(a,b)$ 
to vertex $b$ (Property 3). 
\end{proof}
The claim implies that the change in $B_T(f')$ after updating $f'(v) \leftarrow f(v)$ when $|W(v)| > 1$ is upper bounded by the change in $B_T(f')$ after updating $f'(v) \leftarrow f(v)$ if we instead had $W(v)=\{a\}$. Thus, every update increases $B_T(f')$ by at most $6\paren{f(v)}^2$, and the very first update increases $B_T(f')$ by at most $2\paren{f(v)}^2$. 
Since we started out with $B_T(f') = 1$, and we must have $B_T(f') = n$ after finishing all of the updates, we have that $c_f(T) \geq \sqrt{\frac{n}{6}}$ for any resolving broadcast $f$ of $T$.
\end{proof}

%This proof can be optimized to give the following corollary, with an explicit constant factor:
%\begin{corollary} \label{corollary: acyclic_lower_bound_constant}
%For all trees $T$ of order $n$, we have ${\bdim}(T) \geq  \frac{\sqrt{n}}{7}$.
%\end{corollary}
%\begin{proof}
%Let $T$ be a tree of order $n$. For a function $f': V(T)\rightarrow \mathds{Z}^+ \cup \{0\}$, we consider the maximum possible increase in $B_T(f')$ from updating $f'(v)\leftarrow x$ for some $x\in \mathds{Z}^+$. 
%\end{proof}

Because the broadcast dimension of a disconnected graph is at least the sum of the broadcast dimensions of all of its connected components, \cref{thm: acyclic_lower_bound} directly implies the following corollaries.
\begin{corollary}
\label{corollary: acyclic}
For all acyclic graphs $G$ of order $n$, we have 
${\bdim}(G) =  \Omega(\sqrt{n})$.
\end{corollary}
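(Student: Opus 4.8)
The plan is to reduce the statement to the tree case settled by \cref{thm: acyclic_lower_bound}. An acyclic graph $G$ of order $n$ is a forest, so I would write $G = T_1 \cup \dots \cup T_m$ as the disjoint union of its connected components, each $T_i$ a tree on $n_i$ vertices with $\sum_i n_i = n$. The first step is the fact recorded just before the corollary: $\bdim(G) \ge \sum_i \bdim(T_i)$. This holds because a vertex of one component lies at distance $\infty$ from every vertex of another, and hence resolves no pair lying outside its own component; so the restriction of any resolving broadcast of $G$ to a component $T_i$ is itself a resolving broadcast of $T_i$, and summing costs gives the inequality. The same reasoning shows more generally that $\bdim(G) \ge \bdim(H)$ whenever $H$ is the union of some of the components of $G$, which I will also use below.

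The second step is to apply \cref{thm: acyclic_lower_bound} to each component $T_i$ with $n_i \ge 2$ and combine the bounds using superadditivity of the square root, $\sqrt{a}+\sqrt{b} \ge \sqrt{a+b}$ (immediate from $(\sqrt a+\sqrt b)^2 = a+b+2\sqrt{ab}\ge a+b$, then induction for more terms). Letting $t$ be the number of isolated vertices of $G$ (the components with $n_i = 1$, which contribute nothing to the sum), this yields
\[
\bdim(G)\;\ge\;\sum_{i:\,n_i\ge 2}\bdim(T_i)\;\ge\;\sum_{i:\,n_i\ge 2}\sqrt{\tfrac{n_i}{6}}\;\ge\;\sqrt{\tfrac{\,n-t\,}{6}}.
\]
If $t \le n/2$, this is already at least $\sqrt{n/12} = \Omega(\sqrt{n})$, so only the case $t > n/2$ remains.

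For that remaining case I would use the component fact in the other direction: the subgraph induced on the $t$ isolated vertices is the edgeless graph $\overline{K_t}$, so $\bdim(G) \ge \bdim(\overline{K_t})$. Two isolated vertices can be resolved only by one of the two themselves, so any resolving broadcast of $\overline{K_t}$ must place all but at most one of these vertices in its support, giving $\bdim(\overline{K_t}) \ge t-1 > n/2 - 1$, which is $\Omega(\sqrt{n})$. Combining the two cases finishes the proof. I do not anticipate a genuine obstacle here, since \cref{thm: acyclic_lower_bound} supplies all the real content; the only points requiring care are the bookkeeping for isolated vertices (note that \cref{thm: acyclic_lower_bound} as stated is vacuous, and in fact slightly false, at $n_i = 1$, where $\bdim = 0 < \sqrt{1/6}$) and the use of additivity over components in both directions.
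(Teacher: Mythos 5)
Your proposal is correct and follows essentially the same route as the paper, which disposes of the corollary in one sentence by invoking superadditivity of $\bdim$ over connected components together with \cref{thm: acyclic_lower_bound}. The one substantive thing you add is the treatment of isolated vertices --- where \cref{thm: acyclic_lower_bound} as stated is indeed false, since $\bdim(K_1)=0<\sqrt{1/6}$ --- via the observation that all but one isolated vertex must lie in the support of any resolving broadcast; this patches a small gap that the paper's one-line justification glosses over, and the rest (superadditivity of $\sqrt{\cdot}$, the two-case split on the number of isolated vertices) is routine bookkeeping consistent with the paper's intent.
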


\begin{corollary}
\label{corollary: adim_of_acyclic}
For all acyclic graphs $G$ of order $n$, we have $\adim(G) = \Omega(\sqrt{n})$.
\end{corollary}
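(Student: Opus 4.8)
The plan is to deduce the corollary from Corollary~\ref{corollary: acyclic} together with the elementary inequality $\adim(G) \geq \bdim(G)$, which is the first part of the observation recorded after \cref{obs: twin}. For a general (possibly disconnected) acyclic graph $G$ of order $n$, we first note that the broadcast dimension of a disconnected graph is at least the sum of the broadcast dimensions of its connected components; this is exactly the fact invoked immediately before \cref{corollary: acyclic}, so by \cref{thm: acyclic_lower_bound} applied componentwise we already have $\bdim(G) = \Omega(\sqrt{n})$ (this is the content of \cref{corollary: acyclic} itself). The same summation argument applies verbatim to the adjacency dimension, since an adjacency resolving set of $G$ restricts to an adjacency resolving set on each component, so $\adim(G) \geq \sum_i \adim(G_i)$ as well; but we do not even need this, because $\adim(G) \geq \bdim(G)$ handles the disconnected case just as well once we have the bound for $\bdim$.

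Concretely, the key steps in order are: (1) invoke $\adim(G) \geq \bdim(G)$ for the acyclic graph $G$; (2) invoke \cref{corollary: acyclic} to get $\bdim(G) = \Omega(\sqrt{n})$; (3) chain the two to conclude $\adim(G) = \Omega(\sqrt{n})$. That is the whole argument — three one-line implications. If one prefers to avoid citing \cref{corollary: acyclic} and instead go back to \cref{thm: acyclic_lower_bound} directly, the additional step is the componentwise decomposition: write $G$ as a disjoint union of trees $T_1, \dots, T_k$ with $\sum_j |V(T_j)| = n$, apply $\bdim(T_j) \geq \sqrt{|V(T_j)|/6}$ to each, and use $\bdim(G) \geq \sum_j \bdim(T_j)$ together with concavity (or simply $\sum_j \sqrt{a_j} \geq \sqrt{\sum_j a_j}$ for nonnegative $a_j$) to recover $\bdim(G) \geq \sqrt{n/6}$.

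There is essentially no obstacle here: both ingredients — the inequality $\adim(G)\geq\bdim(G)$ and the lower bound on $\bdim$ of acyclic graphs — are already available in the excerpt, and combining them is immediate. The only point requiring a moment of care is making sure the disconnected case is covered, which is why the componentwise subadditivity of $\bdim$ (or of $\adim$) should be stated explicitly; but this is routine, since removing a component's vertices from a resolving broadcast of $G$ and restricting gives a resolving broadcast of that component, forcing the total cost to split as a sum over components. Thus \cref{corollary: adim_of_acyclic} follows.
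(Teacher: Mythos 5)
Your proposal is correct and matches the paper's own (implicit) argument exactly: the paper derives this corollary by combining the componentwise lower bound on $\bdim$ from \cref{thm: acyclic_lower_bound} (i.e.\ \cref{corollary: acyclic}) with the inequality $\adim(G) \geq \bdim(G)$. The extra care you take with the disconnected case via subadditivity and $\sum_j \sqrt{a_j} \geq \sqrt{\sum_j a_j}$ is sound and only makes explicit what the paper leaves as a one-line remark.
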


\begin{corollary}
\label{corollary: adim_of_acyclic2}
For all acyclic graphs $G$ of order $n$, we have $\adim(G) = O\paren{\paren{\bdim(G)}^2}$.
\end{corollary}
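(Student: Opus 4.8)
The plan is to combine the lower bound $\adim(G) = \Omega(\sqrt{n})$ from \cref{corollary: adim_of_acyclic} with the elementary upper bound $\adim(G) \leq n$ from \cref{thm: lowerbound_adim_bdim}, where $n$ is the order of $G$. Concretely, for a connected acyclic graph $G$ on $n$ vertices, \cref{corollary: acyclic} (via \cref{thm: acyclic_lower_bound}) gives $\bdim(G) \geq \sqrt{n/6}$, hence $n \leq 6\,(\bdim(G))^2$. Plugging this into $\adim(G) \leq n$ yields $\adim(G) \leq 6\,(\bdim(G))^2 = O\paren{\paren{\bdim(G)}^2}$, which is exactly the claimed bound. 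For the disconnected case, I would apply the same argument componentwise: if $G$ has components $G_1, \dots, G_t$ of orders $n_1, \dots, n_t$, then $\adim(G) \le \sum_i n_i = n$ and $\bdim(G) \ge \sum_i \bdim(G_i) \ge \sum_i \sqrt{n_i/6}$. Since $\sum_i \sqrt{n_i/6} \ge \sqrt{(\sum_i n_i)/6} = \sqrt{n/6}$ (superadditivity of the sum of square roots, or simply $(\sum a_i)^2 \ge \sum a_i^2$ for nonnegative $a_i$), we again get $n \le 6\,(\bdim(G))^2$ and therefore $\adim(G) \le 6\,(\bdim(G))^2$.

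The key steps, in order: (1) invoke \cref{corollary: acyclic}, or directly \cref{thm: acyclic_lower_bound} together with the componentwise additivity of broadcast dimension noted just before \cref{corollary: acyclic}, to get $\bdim(G) \ge \sqrt{n/6}$, i.e. $n \le 6\,(\bdim(G))^2$; (2) recall the trivial bound $\adim(G) \le |V(G)| = n$ from \cref{thm: lowerbound_adim_bdim}; (3) chain the two inequalities to conclude $\adim(G) \le 6\,(\bdim(G))^2$, which is $O\paren{\paren{\bdim(G)}^2}$.

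There is essentially no obstacle here — the statement is a formal consequence of results already established in this section, and the only mild care needed is handling the disconnected case, where one must check that the square-root lower bound on $\bdim$ aggregates correctly across components (it does, by superadditivity of $x \mapsto \sqrt{x}$ on the nonnegative reals). I would write this as a two-line corollary proof immediately following the statement.
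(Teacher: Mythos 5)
Your proposal is correct and matches the paper's intended argument: the paper derives this corollary directly from \cref{thm: acyclic_lower_bound} (extended to disconnected acyclic graphs via componentwise additivity of $\bdim$) combined with the trivial bound $\adim(G)\leq n$, exactly as you do. Your explicit check that the $\sqrt{n/6}$ bound aggregates over components by superadditivity of the square root is a correct filling-in of the detail the paper leaves implicit.
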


Now we will show that the bound from \cref{thm: acyclic_lower_bound} is sharp up to a constant factor and that the asymptotic bounds from \cref{corollary: acyclic} and \cref{corollary: adim_of_acyclic2} are asymptotically optimal.
We do so by finding a family of trees that achieves these bounds up to a constant factor. This family of graphs will also be used to study edge deletion in \cref{Section: Edge_Deletion}.
\begin{definition}
\label{def: F_k}
For every $k\in \mathds{Z}^+ \cup \{0\}$, graph $L_k$ is the path $v_0, \dots, v_k$.
The graph $F_k$ is $L_k$ with a path $P_i$ connected to $v_i$ for each $1\leq i\leq k$. (See \cref{figure: F3} for the graph $F_3$.)%Connect $v_i$ to a path $P_i$ for $1\leq i \leq \floor{\frac{k}{2}}$ and connect $v_i$ to path $P_{k-i}$ for $\floor{\frac{k}{2}}< i \leq k-1$. 
\end{definition}
\begin{figure}[h]
\includegraphics[scale=.5]{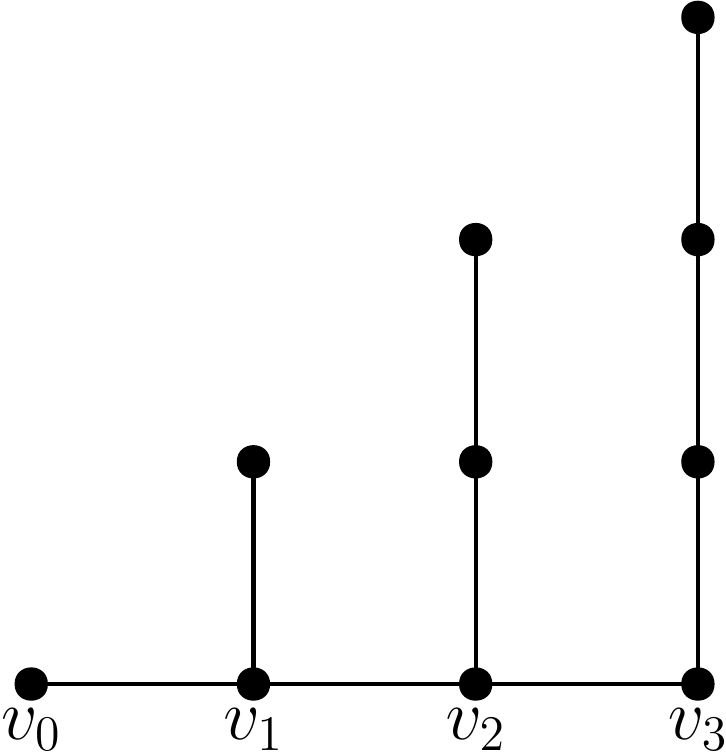}
\centering
\caption{The graph $F_3$.}
 \label{figure: F3}
\end{figure}

\begin{theorem}
\label{thm: construct_F}
For every $k\in \mathds{Z}^+ \cup \{0\}$, tree $F_k$ of order $\Theta(k^2)$ has ${\bdim}(F_k) =  O(k)$ and $\adim(F_k) = \Theta(k^2)$. 
\end{theorem}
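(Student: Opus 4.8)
First I would pin down the structure of $F_k$. The spine $L_k$ has vertices $v_0,\dots,v_k$, and to each $v_i$ ($1\le i\le k$) we attach a pendant path $P_i$ of length $i$ (or $i-1$, whichever the figure shows); in either case the total order is $|V(F_k)| = (k+1) + \sum_{i=1}^k i = \Theta(k^2)$, which gives the stated order. For the lower bound on $\adim$, I would use \cref{obs: twin}(2): the two vertices at the far end of each pendant path... actually a cleaner route is to bound $\adim$ below by exhibiting $\Theta(k^2)$ pairs that can only be resolved adjacently, or simply invoke \cref{corollary: adim_of_acyclic}, which already gives $\adim(F_k)=\Omega(\sqrt{n})=\Omega(k)$ — but that is not enough. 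So I would instead argue directly: for each pendant path $P_i$, a vertex $z\in A$ with $d_1$-value nontrivial can only distinguish vertices within distance $1$ of it, so an adjacency resolving set must contain, on each pendant path $P_i$, at least $\sim \frac{2}{5}|P_i|$ vertices by the same local packing argument underlying \cref{thm: paths_adj}; summing over $i$ gives $\adim(F_k) = \Omega(k^2)$. Combined with the trivial upper bound $\adim(F_k)\le |V(F_k)| = O(k^2)$, we get $\adim(F_k)=\Theta(k^2)$.

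For the upper bound $\bdim(F_k) = O(k)$, the plan is to construct an explicit resolving broadcast of cost $O(k)$. The key idea is to place one transmitter with range $\approx i$ near the attachment point $v_i$ of each pendant path $P_i$, so that a single transmitter "covers" the entire path $P_i$; the cost contributed by path $P_i$ is then $O(i)$, but that sums to $\Theta(k^2)$, which is too much. So instead I would put the transmitter of range $\approx i$ not at $v_i$ but somewhere that lets it do double duty, or — better — place a transmitter of range $\Theta(k)$ at a single central spine vertex so it reaches everything, and then add $O(1)$-range transmitters along each pendant path and the spine to actually resolve vertices. The honest accounting: each pendant path $P_i$ needs its own "local" resolving structure, and a path of length $i$ needs broadcast cost $\Theta(i)$ in isolation — so naively $\sum_i \Theta(i) = \Theta(k^2)$. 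The trick must be that one transmitter of range $r$ placed on the spine resolves many pendant vertices at once by their distinct distances. Concretely: place transmitters $f(v_i) = 1$ (or $2$) for all $i$ on the spine — cost $O(k)$ — plus one transmitter of range $k$ somewhere to reach the far ends; then a vertex at depth $j$ on $P_i$ is distinguished by its distance to $v_i$'s transmitter (which is $j$, revealing the depth) together with distance to $v_{i-1}$ or $v_{i+1}$ (revealing which path). I would verify that this $f$ has $c_f(F_k) = O(k)$ and is resolving by checking that (spine vertex, spine vertex), (pendant vertex, spine vertex), (pendant vertex on $P_i$, pendant vertex on $P_j$ with $i\ne j$), and (two pendant vertices on the same $P_i$) are all resolved.

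The main obstacle I anticipate is the upper-bound construction: making sure the cost stays $O(k)$ while still resolving two pendant vertices lying on \emph{different} paths $P_i, P_j$ at the \emph{same} depth — their distances to a far-away spine transmitter of large range differ by $|i-j|$, so a single large-range transmitter plus a single reference point may suffice, but one must be careful that the $d_{f(z)}$ truncation doesn't collapse these differences, i.e. the large-range transmitter must genuinely reach both. I would handle this by placing the high-cost transmitter at an endpoint of the spine (say $v_0$) with range exactly $\max_i (i + \text{dist from } v_0 \text{ to } v_i) = \Theta(k)$, guaranteeing it reaches every vertex and that its distance to a depth-$j$ vertex of $P_i$ equals $i + j$, which together with the depth $j$ (read off a nearby unit-range transmitter) determines $i$. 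Everything else is routine pair-checking, and the order statistics all collapse to $\Theta(k)$ and $\Theta(k^2)$ as claimed. Finally, I note this construction is reused for the edge-deletion results in \cref{Section: Edge_Deletion}, so I would state the explicit broadcast $f$ as a named object in the proof rather than burying it.
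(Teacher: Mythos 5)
Your computation of the order and your argument for $\adim(F_k)=\Theta(k^2)$ are fine and in fact more detailed than the paper's (which just asserts that any adjacency resolving set must be linear in the number of pendant-path vertices). The problem is in your $\bdim(F_k)=O(k)$ construction, and it is a genuine gap, not a routine verification. Your final proposal is one long-range transmitter at $v_0$ plus unit-range transmitters on the spine, with the depth $j$ of a pendant vertex ``read off a nearby unit-range transmitter.'' But a transmitter at $v_i$ with $f(v_i)=1$ reports $d_{f(v_i)}(u,v_i)=\min\{d(u,v_i),2\}$, so every pendant vertex at depth $j\ge 2$ on \emph{any} path reports the value $2$ to \emph{every} spine transmitter; the truncation you worried about for the large transmitter is exactly what kills the small ones. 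Concretely, the depth-$2$ vertex of $P_5$ and the depth-$3$ vertex of $P_4$ both have distance $7$ to $v_0$ and report $2$ to every unit-range spine vertex, so they get identical broadcast representations and your $f$ is not a resolving broadcast. No $O(1)$-range gadgets can recover the depth of vertices that are $\omega(1)$ deep, so the depth must come from a second long-range coordinate.

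The paper's construction supplies exactly that second coordinate: set $f_k(v_0)=f_k(v_k)=2k$ and $f_k\equiv 0$ elsewhere, for total cost $4k$. Since $F_k$ is a tree, the depth-$j$ vertex of $P_i$ has representation $\bigl(i+j,\,(k-i)+j\bigr)$, spine vertex $v_{i'}$ has $\bigl(i',\,k-i'\bigr)$, the coordinate sum $k+2j$ recovers $j$ (separating spine from pendant vertices), and the difference recovers $i$; the range $2k$ genuinely reaches every vertex, so no truncation occurs. Your instinct that ``a single large-range transmitter plus a single reference point may suffice'' was the right one --- the fix is simply that the reference point must itself be a large-range transmitter at the other end of the spine, not a collection of unit-range ones.
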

\begin{proof} 
The function $f_k:V(F_k)\rightarrow \mathds{Z}^+ \cup \{0\}$ with $f_k(v_0) = f_k(v_k) =2k$ and $f_k(v)=0$ for all other vertices $v
\in V(F_k)$ is a resolving broadcast of $F_k$ with $c_{f_k}(F_k) = 4k$, so $\bdim(F_k)\leq 4k = O(k)$.

The size of any adjacency resolving set of $F_k$ must be linear in the number of vertices in order for all of the vertices on the paths attached to $L_k$ to be distinguished. Since tree $F_k$ has order $\Theta(k^2)$, we have $\adim(F_k) = \Theta(k^2)$.
\end{proof}

Combining \cref{corollary: acyclic} and \cref{thm: construct_F}, we have proven \cref{thm: asymptotic_lower_bound1}.

In \cite{geneson2020broadcast}, Geneson and Yi showed that, for two connected graphs $G$ and $H$ such that $H\subset G$, the ratios $\frac{\dim(H)}{\dim(G)}$, $\frac{\adim(H)}{\adim(G)}$, and $\frac{\bdim(H)}{\bdim(G)}$ can be arbitrarily large. In the next result, we show that this can only be true when the graph $G$ is not acyclic.

\begin{proposition}
For two trees $T_1$ and $T_2$ such that $T_1\subseteq T_2$, we have that $\dim(T_1)\leq \dim(T_2)$, $\adim(T_1)\leq \adim(T_2)$, and $\bdim(T_1)\leq \bdim(T_2)$. 
\end{proposition}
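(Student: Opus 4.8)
The plan is to prove each of the three inequalities by the same strategy: take an efficient resolving structure (resolving set, adjacency resolving set, or resolving broadcast) of the larger tree $T_2$ and restrict or project it to $T_1$, showing that the restricted object still resolves $T_1$. The key structural fact I want to exploit is that $T_1 \subseteq T_2$ means $T_1$ is obtained from $T_2$ by deleting vertices (and possibly edges), but since both are trees and $T_1$ is connected — I should first check whether the statement intends $T_1$ to be a connected subgraph or a subforest; given the context and the use of "tree", I will assume $T_1$ is a subtree, i.e., a connected subgraph. The crucial point is that for any two vertices $u,v \in V(T_1)$, the unique $u$--$v$ path in $T_2$ lies entirely in $T_1$, so $d_{T_1}(u,v) = d_{T_2}(u,v)$. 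This isometry is what makes the restriction argument work and is exactly what can fail for general graphs.

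For the metric dimension, I would take an efficient resolving set $S$ of $T_2$ and replace each $s \in S$ by the vertex $\pi(s) \in V(T_1)$ closest to $s$ in $T_2$ (the "gate" of $s$ into the subtree $T_1$); call the resulting set $S' \subseteq V(T_1)$. For distinct $x,y \in V(T_1)$, pick $s \in S$ with $d_{T_2}(x,s) \neq d_{T_2}(y,s)$. Since the path from $x$ to $s$ and the path from $y$ to $s$ both pass through $\pi(s)$ (as $T_1$ is convex in $T_2$ and $x,y \in T_1$), we get $d_{T_2}(x,s) = d_{T_1}(x,\pi(s)) + d_{T_2}(\pi(s),s)$ and similarly for $y$, so $d_{T_1}(x,\pi(s)) \neq d_{T_1}(y,\pi(s))$, and $\pi(s) \in S'$ resolves the pair. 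Hence $\dim(T_1) \leq |S'| \leq |S| = \dim(T_2)$. The same gate-projection works verbatim for a resolving broadcast: given efficient $f$ on $T_2$, define $f'$ on $T_1$ by pushing the value of each $z \in \supp(f)$ to its gate $\pi(z)$ (taking the max if several $z$ share a gate — though summing would also be fine for the cost bound), and observe that $d_{f(z)}(x,z) = \min\{d_{T_1}(x,\pi(z)) + d_{T_2}(\pi(z),z),\, f(z)+1\}$ depends on $x$ only through $d_{T_1}(x,\pi(z))$, so $f'$ with $f'(\pi(z)) \ge f(z) - d_{T_2}(\pi(z),z)$ resolves whatever $f$ did, giving $\bdim(T_1) \le c_{f'}(T_1) \le c_f(T_2) = \bdim(T_2)$. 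One must be slightly careful that $f(z) - d_{T_2}(\pi(z),z)$ could be zero or negative, but in that case $z$ reaches no vertex of $T_1$ and can simply be dropped.

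For the adjacency dimension the argument is different, since $d_1$ is not additive along paths — projecting to a gate does not preserve adjacency. Instead I would use \cref{obs: twin}: in a tree, the leaves hanging off a common support vertex are twins, and more generally pairs of vertices with identical closed neighborhoods force membership in any adjacency resolving set. Actually the cleanest route for $\adim$ is: given an efficient adjacency resolving set $A$ of $T_2$, I claim $A \cap V(T_1)$ together with, for each vertex of $A$ outside $T_1$, its gate into $T_1$, forms an adjacency resolving set of $T_1$ — but this needs care because distances $2$ and $\ge 3$ both collapse to "$\infty$" under $d_1$ and gating can change a distance-$2$ relationship. I expect this to be the main obstacle, and I would handle it by instead arguing directly: for distinct $x,y \in V(T_1)$ resolved in $T_2$ by $z \in A$ via $d_1$, consider the first vertex $w$ on the $z$-to-$\{x,y\}$ side that lies in $T_1$; a short case analysis on whether $z \in V(T_1)$, whether $w$ is adjacent to one of $x,y$, etc., shows some vertex of the projected set distinguishes $x,y$ in $T_1$ with $d_1$. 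Since $|A \cap V(T_1)| + |\{\text{gates}\}| \le |A| = \adim(T_2)$, this yields $\adim(T_1) \le \adim(T_2)$. If the gating case analysis for $\adim$ proves too delicate, an alternative is to induct on $|V(T_2)| - |V(T_1)|$, reducing to the single step of deleting one leaf $\ell$ of $T_2$ not in $T_1$ and showing $\adim(T_2 - \ell) \le \adim(T_2)$ directly, which is a cleaner local modification.
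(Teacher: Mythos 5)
Your overall strategy is sound, but it diverges from the paper's, which handles all three invariants uniformly by leaf pruning: delete one leaf $v$ of $T_2$ not in $T_1$ at a time, showing $\bdim(T-v)\leq\bdim(T)$ by moving the value $f(v)$ (decremented by one, and combined via a max) onto the neighbor of $v$, and iterating until $T_1$ is reached; the $\dim$ and $\adim$ cases are dispatched ``with similar reasoning.'' Your one-shot gate projection $\pi$ is a genuinely different and arguably cleaner route for $\dim$ and $\bdim$: the convexity of subtrees gives $d_{T_2}(x,s)=d_{T_1}(x,\pi(s))+d_{T_2}(\pi(s),s)$ in one stroke, with no induction, and it makes transparent exactly why the statement fails for non-acyclic $G$. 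Your instinct that $\adim$ resists this projection is also correct, and your fallback for $\adim$ (delete one leaf at a time, replacing a deleted leaf in $A$ by its neighbor) is precisely the paper's method.

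One genuine flaw: in the $\bdim$ projection you claim that when $f(z)-d_{T_2}(\pi(z),z)\leq 0$ the vertex $z$ ``reaches no vertex of $T_1$ and can simply be dropped.'' That is false in the boundary case $f(z)=d_{T_2}(\pi(z),z)$: there $z$ reaches exactly $\pi(z)$ and resolves every pair containing $\pi(z)$, so dropping it can destroy the resolving property. Concretely, take $T_2=P_3$ with vertices $a,b,c$, $T_1$ the edge $ab$, and $f(c)=1$: this is an efficient resolving broadcast of $T_2$, but your projection yields the identically zero function on $T_1$, which would give the absurd conclusion $\bdim(P_2)\leq 0$. The repair is cheap --- set $f'(\pi(z))=\max\{1,\,f(z)-d_{T_2}(\pi(z),z)\}$ in this case, which still costs at most $f(z)$ since $f(z)\geq d_{T_2}(\pi(z),z)\geq 1$ --- but as written the degenerate case is mishandled. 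The $\adim$ argument should be committed to the leaf-deletion route rather than left as a sketch of a ``delicate'' case analysis.
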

\begin{proof}
Let $T$ be a tree with efficient resolving broadcast $f:V(T)\rightarrow \mathds{Z}^+ \cup \{0\}$. Let $v\in V(T)$ be a leaf of $T$, and let $uv\in E(T)$.
If $v\not\in\supp_{T}(f)$, then $g:V(T-v)\rightarrow \mathds{Z}^+ \cup \{0\}$ with $g(w) = f(w)$ for every $w\in V(T-v)$ is a resolving broadcast of graph $T-v$. If $v\in\supp_{T}(f)$, then $g:V(T-v)\rightarrow \mathds{Z}^+ \cup \{0\}$ with $g(u) = \max\{f(v) -1, f(u)\}$ and $g(w) = f(w)$ for every $w\in V(T-v) - \{u\}$ is a resolving broadcast of $T-v$. Thus, $\bdim(T-v)\leq \bdim(T)$ for any leaf $v$ of $T$. 
Tree $T_2$ can be pruned into tree $T_1$ by repeatedly deleting leaves that are not in $T_1$. Thus, $\bdim(T_1)\leq \bdim(T_2)$.
The results $\dim(T_1)\leq \dim(T_2)$ and $\adim(T_1)\leq \adim(T_2)$ follow with similar reasoning.
\end{proof}

\section{Comparing $\adim(G)$ and $\bdim(G)$}
\label{Section: Comparing}

Geneson and Yi \cite{geneson2020broadcast} showed that, for the the $d$-dimensional grid graph $G_k = \Pi_{i=1}^{d} P_k$, we have $\bdim(G_k) = \Theta(k)$ and $\adim(G_k) = \Theta(k^d)$ for every $k\in \mathds{Z}^+$ and any $d\geq 1$, where the constants in the bounds depend on $d$.
In this section, we prove the following theorem.

\begin{theorem}
\label{thm: compare}
There exists a family of graphs $\set{G_k}_{k\in \mathds{Z}^+}$ with $\bdim(G_k) = \Theta(k)$ and $\adim(G_k) = 2^{\Omega(k)}$ for every $k\in \mathds{Z}^+$.
\end{theorem}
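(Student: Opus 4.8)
The plan is to build the family $\{G_k\}$ so that $\bdim(G_k)$ stays linear in $k$ while $\adim(G_k)$ is forced to be exponential in $k$. The natural strategy, mirroring the role played by $F_k$ in \cref{thm: construct_F}, is to attach a ``local gadget'' to each vertex of a path-like spine $L_m$ whose total number of vertices is exponential in $k$, but such that a few transmitters of range $O(k)$ placed at the ends of the spine simultaneously resolve everything. For the adjacency lower bound, I would arrange that each gadget contains many mutually ``adjacency-indistinguishable'' vertices — for instance, large sets of twin vertices or, more flexibly, structures in which any adjacency resolving set must contain a constant fraction of the gadget's vertices (by \cref{obs: twin}(2), a pendant pair of twin leaves already forces one of the two into every adjacency resolving set). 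If each gadget has $2^{\Theta(k)}$ vertices and contributes $2^{\Omega(k)}$ to every adjacency resolving set, then $\adim(G_k) = 2^{\Omega(k)}$ immediately, regardless of how many gadgets we use.

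Concretely, I would take $L_k$ to be the path $v_0,\dots,v_k$ as in \cref{def: F_k}, and to $v_i$ attach a gadget $H_i$ that is itself a small tree of depth $O(k)$ but with $2^{\Theta(k)}$ leaves — say a complete binary tree of height $\Theta(k)$, or a ``broom'' — chosen so that (i) its diameter and its distance from $v_0$ and $v_k$ are $O(k)$, and (ii) it contains $2^{\Omega(k)}$ vertices that pairwise form twins or otherwise each demand a distinct landmark in any adjacency resolving set. Here the distance-$2$ blindness of $\adim$ is exactly what we exploit: a vertex at depth $3$ or more inside a gadget sees, via $d_1$, only whether a landmark is its neighbor, so the adjacency resolving set has to ``saturate'' the deep part of each gadget. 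That gives $\adim(G_k)=2^{\Omega(k)}$. For the upper bound $\bdim(G_k)=O(k)$, I would place transmitters of range $\Theta(k)$ at $v_0$ and $v_k$ (and perhaps $O(1)$ more, or one of range $O(k)$ at the root of each gadget if that is cheaper — but the point is the total cost is $O(k)$ if the number of gadgets and their depths are both $O(k)$), and check that the genuine (non-truncated) distances to these few sources, which see the whole graph because its diameter is $O(k)$, distinguish all vertices: within the spine the index $i$ is recovered, and within a gadget $H_i$ the full distance profile from the two spine ends pins down each vertex provided $H_i$ is chosen rigidly enough (e.g.\ so no two vertices of $H_i$ are equidistant from $v_i$ along different branches in a symmetric way). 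The matching lower bound $\bdim(G_k)=\Omega(k)$ then follows from \cref{thm: lowerbound_adim_bdim}, since $\bdim(G_k)=\Omega(\log \adim(G_k)) = \Omega(k)$ — so in fact only the upper bound on $\bdim$ needs a hand-built argument.

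The main obstacle is the $\bdim$ upper bound: I must design each gadget $H_i$ so that it is simultaneously (a) exponentially large with a linear adjacency-resolving cost forced inside it, and (b) completely resolved by the handful of long-range transmitters on the spine. Requirement (a) pushes toward highly symmetric, twin-rich gadgets, while (b) pushes toward rigid, asymmetric ones — the tension between these is the crux. The resolution is that ``twin'' is a $d_1$-notion: two leaves sharing a parent are adjacency-twins but have different \emph{distances} to a far-away source only if we break the symmetry, which we cannot do for a pendant twin pair. So instead I would make the exponential blow-up come from \emph{branching structure at depth $\geq 2$} rather than from pendant twins: a binary tree of height $h=\Theta(k)$ has $2^h$ leaves, each with a distinct depth-profile, hence distinct genuine distance to a root transmitter of range $\geq h$; yet any adjacency resolving set, being $d_1$-limited, must place a landmark among or adjacent to almost all of the $2^{h-1}$ penultimate-level vertices, forcing $\adim$ to be $2^{\Omega(k)}$. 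Verifying that a single range-$\Theta(k)$ transmitter at each gadget root (total cost $O(k^2)$ — too much!) is wasteful, so I would instead verify that the two spine-end transmitters alone resolve a binary-tree gadget, which requires the gadget's two ``coordinates'' (distance from $v_0$, distance from $v_k$) to separate leaves; if a single pair of end transmitters cannot do this for all $k$ gadgets at once, the fallback is to use $\Theta(1)$ gadgets (a single one suffices for the asymptotics) hanging off a short spine, trading the spine length for gadget count and keeping the total transmitter cost $O(k)$. I expect the single-gadget version — one complete binary tree of height $\Theta(k)$ with a transmitter of range $\Theta(k)$ at its root and, if needed, at one leaf — to be the cleanest route, and I would write the proof around that.
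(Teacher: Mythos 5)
There is a fundamental obstruction to your approach: every graph you propose (a path spine with tree gadgets --- binary trees, brooms --- attached, or a single binary tree) is acyclic, and the paper itself shows that no acyclic family can witness the theorem. By \cref{thm: acyclic_lower_bound}, any tree $T$ of order $n$ has $\bdim(T) \geq \sqrt{n/6}$, so a gadget with $2^{\Theta(k)}$ vertices already forces $\bdim(G_k) = 2^{\Omega(k)}$, not $O(k)$; equivalently, \cref{corollary: adim_of_acyclic2} gives $\adim(G) = O\paren{(\bdim(G))^2}$ for acyclic $G$, so $\bdim = \Theta(k)$ caps $\adim$ at $O(k^2)$. The paper even records this explicitly in the remark following its proof of \cref{thm: compare}. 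You can also see the failure concretely in your own construction: two sibling leaves $u,v$ at the bottom of a complete binary tree satisfy $d(z,u)=d(z,v)$ for every $z\notin\{u,v\}$, so any resolving broadcast must assign a positive value to one of $u$ or $v$ for each of the $2^{h-1}$ sibling pairs, giving $\bdim \geq 2^{h-1}$. Your claimed resolution of the tension --- that branching at depth $\geq 2$ yields distinct ``genuine distance'' profiles to a far-away root transmitter --- is exactly where the argument breaks: distance from the root does not separate the two children of a common parent, and nothing else in a tree can separate them cheaply either. The ``exponentially many vertices, each pair needing a dedicated landmark'' property that you want for the adjacency lower bound is precisely the property that also blows up the broadcast dimension in the acyclic setting.

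The paper's construction is necessarily non-acyclic and quite different in spirit: $X_k$ is built from an iterated Cartesian product $\widehat{X_0}\,\square\,\widehat{X}\,\square\cdots\square\,\widehat{X}$ of three-vertex graphs (so it has $3^{k+1}$ vertices arranged in a hypercube-like grid of diameter $O(k)$), together with $k+2$ apex vertices $s_j$, each adjacent to the entire ``slice'' of vertices whose $j$th coordinate is $a$. Placing a transmitter of range $2$ or $3$ at each apex costs $2k+3$ and resolves everything coordinate-by-coordinate, giving $\bdim(X_k)=O(k)$ (the matching lower bound is from \cref{thm: lowerbound_adim_bdim}, as you correctly note). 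The adjacency lower bound $2^{\Omega(k)}$ is then proved by induction on $k$, showing that any adjacency resolving set must contain at least $2^k$ vertices outside the apexes because the three copies of $X_{k-1}$'s core inside $X_k$ cannot help one another at distance $1$. If you want to salvage your write-up, the key realization to import is that the long-range transmitters must be adjacent to exponentially many vertices at once (dominating apexes), which is impossible in a tree.
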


First, we recall the following graph notation.
We denote by $G[S]$ the subgraph of $G$ induced by $S\subseteq V(G)$. 
The \textit{Cartesian product} of graphs $G$ and $H$, denoted by $G \square H$, is the graph with vertex set $V(G) \times V(H):=\{(u_1,u_2)\mid u_1\in V(G), u_2\in V(H)\}$, where $(u_1,u_2)$ is adjacent to $(v_1,v_2)$ whenever $u_1 = v_1$ and $u_2v_2 \in E(H)$, or $u_2 = v_2$ and $u_1v_1\in E(G)$.

We prove \cref{thm: compare} by finding a family of graphs with the desired properties. This family of graphs is defined as follows:

\begin{definition}
Graph $\widehat{X_0}$ is the path $a,b,c$, and graph $\widehat{X}$ is the graph with vertex set $\{a,b,c\}$ and edge set $\{ab\}$.
For $i\in \mathds{Z}^+$, we let $$\widehat{X_i} = \widehat{X_0} \square \underbrace{\widehat{X} \square \widehat{X} \dots \square \widehat{X}}_{i\text{ times}}.$$ For $i\in \mathds{Z}^+ \cup \{0\}$, graph $X_i$ is $\widehat{X_i}$ with one modification: for every $1\leq j\leq i+1$, graph $X_i$ has an additional vertex $s_j$ that is adjacent to every vertex with $a$ as the $j$th coordinate. (See \cref{figure: G1} for the graph $X_1$.)
\end{definition}

\begin{figure}[t]
\includegraphics[scale=.5]{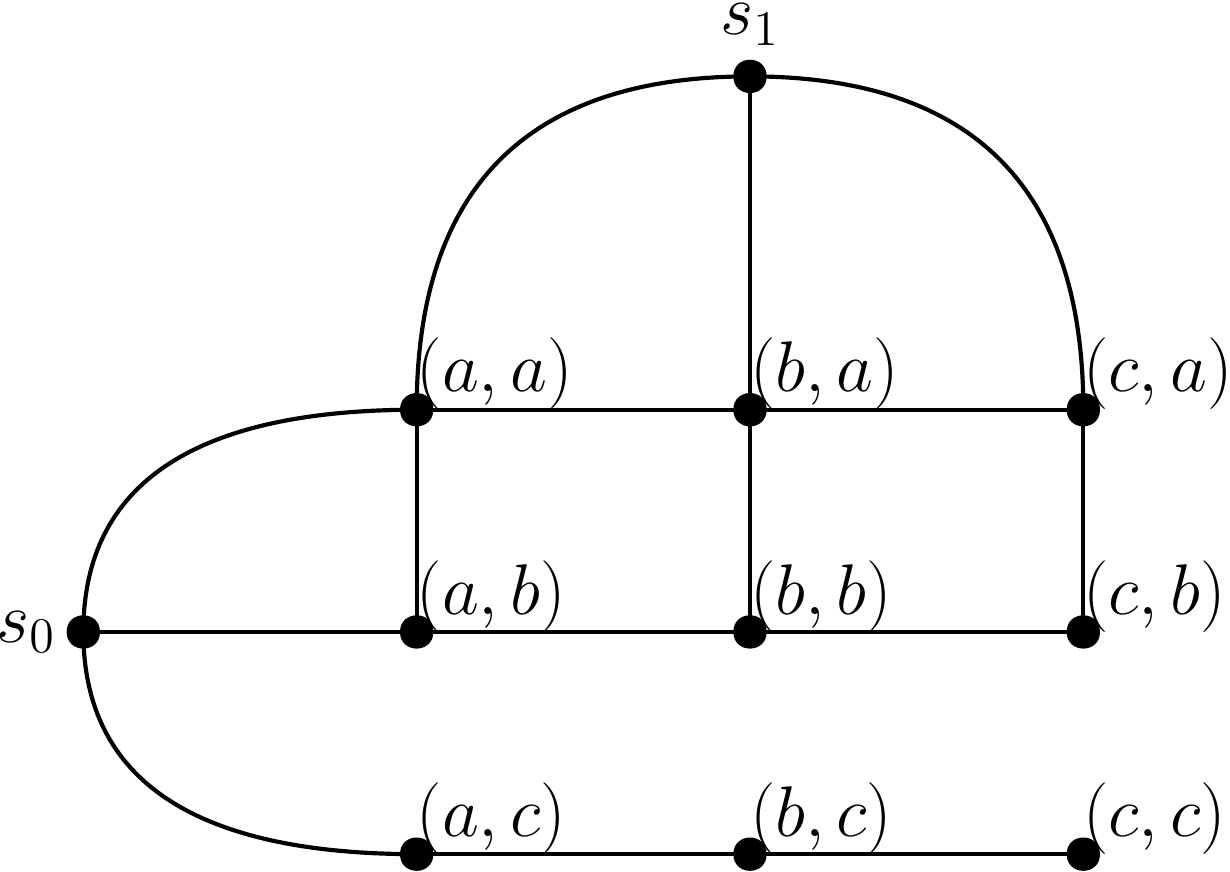}
\centering
\caption{The graph $X_1$.}
 \label{figure: G1}
\end{figure}

\begin{lemma} \label{lemma: compare1}
We have $\bdim(X_k) = \Theta(k)$ for all $k\in \mathds{Z}^+$. 
\end{lemma}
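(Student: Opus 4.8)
The plan is to establish the two bounds $\bdim(X_k) = O(k)$ and $\bdim(X_k) = \Omega(k)$ separately. For the upper bound, I would exhibit an explicit cheap resolving broadcast. The natural candidate is to put weight on a small number of "anchor" vertices that reach everything: the graph $\widehat{X_k}$ has diameter linear in $k$ (each $\widehat{X}$-factor contributes $1$ to distances, the $\widehat{X_0}$-factor contributes $2$), so a single vertex with weight $\Theta(k)$ reaches the whole of $\widehat{X_k}$. The coordinates of a vertex in $\widehat{X_k}$ record, for each factor, which of $a,b,c$ (or $a,b$) it sits on; I would argue that $O(1)$ broadcasts of weight $O(k)$ — say one near the all-$a$ corner and one near the all-$c$/all-$b$ corner — already distinguish all vertices of $\widehat{X_k}$ by their distance vectors, because distance to a corner essentially reads off a weighted count of non-$a$ coordinates, and two well-chosen corners pin down each coordinate. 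The extra apex vertices $s_j$ need to be distinguished too: since $s_j$ is adjacent to all vertices with $a$ in coordinate $j$, the $s_j$ are pairwise "almost twins" in a controlled way, so I would either add a bounded-weight broadcast at each $s_j$ (costing $O(k)$ total since there are $k+1$ of them) or, better, observe that the corner broadcasts already separate them because $d(s_j, \text{corner})$ depends on $j$. Either way the total cost is $O(k)$.

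For the lower bound $\bdim(X_k) = \Omega(k)$, I would use the twin-vertex principle (\cref{obs: twin}(3)) together with a counting/pigeonhole argument à la \cref{thm: acyclic_lower_bound}, but the cleanest route is probably the following. The apex vertices $s_1,\dots,s_{k+1}$ are designed so that each $s_j$ forces structure on any resolving broadcast: consider the vertices of $\widehat{X_k}$ that lie "between" two configurations differing only in coordinate $j$. More promising is to show directly that $X_k$ contains many pairwise twin pairs, or many pairs that can only be resolved by broadcasts of large reach; alternatively, restrict attention to a small number of broadcast vertices and count broadcast representations. Since $|V(\widehat{X_k})| = 3\cdot 2^k$, but we only want an $\Omega(k)$ lower bound (not $\Omega(\log|V|)$, which \cref{thm: lowerbound_adim_bdim} already gives as $\Omega(k)$ — note $\log(3\cdot 2^k) = \Theta(k)$!). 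So in fact the lower bound $\bdim(X_k) = \Omega(\log|V(X_k)|) = \Omega(k)$ is immediate from \cref{thm: lowerbound_adim_bdim}, since $X_k$ has $3\cdot 2^k + (k+1)$ vertices. This is the shortcut I would take: cite \cref{thm: lowerbound_adim_bdim} for the lower bound.

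So the real content is the upper bound, and the hard part will be verifying that a constant (or at most $O(k)$-many, each of bounded weight) number of linear-weight broadcasts genuinely distinguishes every pair of vertices — both pairs inside $\widehat{X_k}$ and pairs involving the apexes $s_j$. The subtlety with the $s_j$: two apexes $s_i, s_j$ have closed neighborhoods that are large and overlapping, and a vertex $v\in\widehat{X_k}$ with $a$ in both coordinates $i$ and $j$ is adjacent to both; I need a broadcast vertex whose truncated distance distinguishes $s_i$ from $s_j$ and from such $v$'s. I expect to handle this by placing a broadcast at (or near) a specific corner of $\widehat{X_k}$ and computing $d_k(\,\cdot\,, s_j)$ explicitly: $d(\text{all-}a\text{ corner}, s_j) = 1$ for every $j$, which is bad, but $d(\text{all-}c\text{ corner}, s_j)$ should depend on $j$ in a way that separates them, and combining with a third strategically placed low-weight broadcast closes the remaining gaps. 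I would organize the verification as: (i) distance formula in $\widehat{X_k}$ in terms of coordinate patterns; (ii) two corner broadcasts resolve all of $\widehat{X_k}$; (iii) the same broadcasts, plus possibly $O(1)$ more of weight $O(k)$, resolve all pairs involving apexes; (iv) sum the costs.
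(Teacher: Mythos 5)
Your lower bound is exactly the paper's: $|V(X_k)|=2^{\Theta(k)}$, so \cref{thm: lowerbound_adim_bdim} gives $\bdim(X_k)=\Omega(\log|V(X_k)|)=\Omega(k)$ immediately. The problem is the upper bound, where your main construction rests on a wrong picture of the metric of $X_k$. The factor $\widehat{X}$ has vertex set $\{a,b,c\}$ but only the edge $ab$, so $c$ is an isolated vertex of $\widehat{X}$ and the Cartesian product $\widehat{X_k}$ is highly disconnected; in particular $\widehat{X_k}$ does not have diameter $\Theta(k)$, and distances in $X_k$ do not decompose coordinatewise. All connectivity across a $c$-coordinate is routed through the apexes: $s_j$ is adjacent to every vertex with $a$ in coordinate $j$, so any two such vertices are at distance $2$ no matter how many other coordinates they differ in. A short computation shows $\diam(X_k)=O(1)$. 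This kills the corner strategy outright: with diameter $D=O(1)$, every entry of a broadcast representation lies in $\{0,1,\dots,D\}$ regardless of the weights, so a resolving broadcast with support of size $m$ produces at most $(D+1)^m$ distinct representations. Distinguishing the $2^{\Omega(k)}$ vertices of $\widehat{X_k}$ therefore forces $|\supp_{X_k}(f)|=\Omega(k)$; no two (or $O(1)$-many) broadcast vertices, however heavily weighted, can work, and ``distance to a corner reads off a weighted count of non-$a$ coordinates'' is false because a shortest path to a corner typically takes an apex shortcut.

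The correct construction is essentially the one you relegate to a parenthetical about separating the apexes from one another: put a small constant weight on \emph{every} apex. The paper sets $f_k(s_0)=3$ and $f_k(s_j)=2$ for the remaining apexes, for a total cost of $2k+3$. The point is that $d(v,s_j)$ equals $1$, $2$, or $\geq 3$ according to whether the $j$th coordinate of $v$ is $a$, $b$, or $c$ (with the $\widehat{X_0}$-coordinate needing reach $3$ to separate its three values), so the truncated distances to the $k+1$ apexes recover all coordinates of $v$; the verification is a clean induction on $k$, splitting a pair of vertices of $X_k$ according to whether they differ in the last coordinate or in the $X_{k-1}$ part. So the overall architecture of your write-up (explicit $O(k)$ broadcast plus the $\Omega(\log n)$ lower bound) matches the paper, but the specific resolving broadcast you propose does not resolve $X_k$, and the proof cannot be completed along those lines.
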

\begin{proof}
Let $k\in \mathds{Z}^+$ be given.
For $i\in \mathds{Z}^+ \cup \{0\}$, we define $S_i = \set{s_j\mid 0\leq j\leq i}$.

For $i\in \mathds{Z}^+ \cup \{0\}$, we define the function $f_i:V(X_i)\rightarrow \mathds{Z}^+ \cup \{0\}$ as follows: $f_i(s_0)=3$, $f_i(s_j)=2$ for every $1\leq j \leq i$, and $f_i(v)=0$ for all other vertices $v$.
We claim that $f_i$ is a resolving broadcast of $X_i$ for all $i\in \mathds{Z}^+ \cup \{0\}$. We proceed to prove this claim by induction. 

In the base case $i=0$, we have that $X_0$ is the path $s_0,a,b,c$. It is easy to see that that function $f_0$ with $f_0(s_0) =3$ is a resolving broadcast of $X_0$. Assuming that $f_{k-1}$ is a resolving broadcast of graph $X_{k-1}$, we will show that $f_k$ is a resolving broadcast of graph $X_k$. 

Let $u_1, u_2\in V(X_{k-1})$ and $v_1, v_2 \in \{a,b,c\}$ such that $(u_1, v_1)$ and $(u_2, v_2)$ are two distinct vertices in $V(X_k)$. If $u_1\neq u_2$, then $(u_1, v_1)$ and $(u_2, v_2)$ are resolved by the vertex in $S_{k-1}$ that resolved $u_1$ and $u_2$ in $X_{k-1}$.
Alternatively, if $u_1 = u_2$, then $v_1\neq v_2$, and $(u_1, v_1)$ and $(u_2, v_2)$ are  resolved by $s_k$. 
Thus, function $f_k$ is a resolving broadcast of $X_k$.

Now we can upper bound the broadcast dimension of $X_k$: $$\bdim(X_k)\leq c_{f_k}(X_k) = 3 + 2k \implies \bdim(X_k)= O(k).$$
By \cref{thm: lowerbound_adim_bdim}, we have $\bdim(X_k) = \Omega(k)$. Thus, we have $\bdim(X_k) = \Theta(k)$.
\end{proof}

\begin{lemma} \label{lemma: compare2}
We have $\adim(X_k) = 2^{\Omega(k)}$ for all $k\in \mathds{Z}^+$. 
\end{lemma}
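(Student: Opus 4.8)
The plan is to show that $X_k$ has an enormous collection of twin vertices (or, more precisely, vertices that behave like twins with respect to adjacency resolution), and then invoke \cref{obs: twin}(2) to conclude that almost all of them must lie in any adjacency resolving set. First I would understand the structure of $\wh{X_k}$: its vertices are tuples $(w_0, w_1, \dots, w_k)$ with $w_0 \in \{a,b,c\}$ (the path coordinate, so $b$ is the center) and $w_j \in \{a,b,c\}$ for $1 \le j \le k$, where in each of the last $k$ coordinates only the edge $ab$ is present. So in coordinate $j \ge 1$, the vertex $c$ is isolated in the factor $\wh{X}$, meaning $(\dots, w_j = c, \dots)$ has no neighbor differing only in coordinate $j$. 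The key observation is that two vertices of $\wh{X_k}$ that agree in coordinate $0$ and in every coordinate $j$ where at least one of them is not equal to $c$, differ only in coordinates where both are $c$ — but $c$ has no neighbors in those factors, so changing a $c$ to a $c$ does nothing; hence such vertices actually have the same closed neighborhood in $\wh{X_k}$. That is false as stated — let me reconsider: the cleaner route is that the vertices of the form $(b, w_1, \dots, w_k)$ with each $w_j \in \{b, c\}$ are pairwise twins in $\wh{X_k}$, since from such a vertex the only neighbors are obtained by toggling a coordinate $j$ with $w_j = b$ to $w_j = a$ (coordinate $0$ toggles $b$ to $a$ or $c$), and $c$-coordinates contribute nothing; so two such vertices that differ only in $c$-versus-$c$... they can't differ then. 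So instead I would look at vertices $(b, w_1, \dots, w_k)$ with $w_j \in \{a, c\}$: here no two distinct ones are twins, but each such vertex's neighborhood within this family is determined, and I should count more carefully.

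The robust approach, avoiding delicate twin arguments, is the distance-profile argument: I would show there are $2^{\Omega(k)}$ vertices of $X_k$ all of whose pairwise distances to the "landmark-candidate" vertices are either equal or incomparable in a way that forces many of them into the resolving set. Concretely, consider the $2^k$ vertices $V^* = \{(c, w_1, \dots, w_k) : w_j \in \{b, c\} \text{ for all } j\}$. For such a vertex $u$, the adjacency vector $d_1(u, \cdot)$ restricted to $V(X_k) \setminus V^*$: I claim any vertex $z \notin V^*$ is adjacent to at most... no. Let me use the genuinely clean fact: within $\wh{X}$, since coordinate $0$ is a path $a,b,c$ and coordinates $1,\dots,k$ each only have edge $ab$, a vertex with first coordinate $c$ is adjacent only to the vertex with first coordinate $b$ and all other coordinates equal (toggling a $b$-coordinate $j$ to $a$ moves to $a$, toggling an $a$-coordinate to $b$, etc.). The point I want: the $2^k$ vertices $(c, w_1, \dots, w_k)$ with $w_j \in \{a, b\}$ have the property that each is a twin of... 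Actually here is the correct twin family. In factor $\wh X$ (edge set $\{ab\}$, vertex $c$ isolated), $a$ and $c$ are \emph{not} adjacent and $N[a] = \{a,b\}$, $N[c] = \{c\}$, $N[b] = \{a, b\}$ — so $a$ and $b$ are twins in $\wh X$. Therefore in the Cartesian product, for coordinate $j \ge 1$, swapping coordinate $j$ between $a$ and $b$ in a vertex $u$ gives a vertex $u'$ with $N[u] = N[u']$ \emph{provided} we are careful — Cartesian products don't preserve twins in general, but here they do because the factor $\wh X$ has the special structure that $N_{\wh X}[a] = N_{\wh X}[b]$. I would verify: $(u_1, \dots, u_k) \sim (v_1, \dots, v_k)$ in $\square \wh X$ iff they agree in all but one coordinate where they are adjacent; if $u, u'$ agree except $u_j = a, u'_j = b$, then a common neighbor/closed-neighborhood check shows $N[u] = N[u']$ in $\wh{X_k}$. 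Then in $X_k$, the extra vertex $s_i$ is adjacent to everything with $a$ in coordinate $i$; this \emph{breaks} the twin relation when $i = j$. So the twin family must fix all coordinates — that kills the exponential count again.

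Given that Cartesian products resist the naive twin argument, the step I expect to be the main obstacle is precisely extracting the exponential lower bound, and I would resolve it by a direct counting/distinguishing argument rather than through \cref{obs: twin}. The plan: let $A$ be any adjacency resolving set of $X_k$, and consider the $3^k$ vertices $u = (b, w_1, \dots, w_k)$, $w_j \in \{a,b,c\}$. For each such $u$, its adjacency representation is governed by its distance-$\le 1$ relationship to the (few) vertices $s_0, \dots, s_{k}$ and to vertices of $\wh{X_k}$. I would argue that for a landmark $z$ to adjacency-distinguish two vertices $u, u'$ of this family, $z$ must be "close" to one of them in a coordinate-sensitive sense, and bound how many pairs a single landmark can separate, forcing $|A| \ge 2^{\Omega(k)}$. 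Alternatively — and this is probably the cleanest — I would exhibit, for each subset $T \subseteq \{1, \dots, k\}$, a vertex $u_T = (b, w_1, \dots, w_k)$ with $w_j = a$ if $j \in T$ and $w_j = c$ otherwise, show that the map $T \mapsto (\text{adjacency representation of } u_T \text{ w.r.t. } V(X_k) \setminus \{u_T : T\})$ is highly non-injective unless $A$ contains almost all the $u_T$, then conclude $\adim(X_k) \ge 2^{k} - o(2^k) = 2^{\Omega(k)}$. Combined with \cref{lemma: compare1}, this proves \cref{lemma: compare2}, and \cref{lemma: compare1} together with \cref{lemma: compare2} immediately yields \cref{thm: compare}, since the family $\{X_k\}$ then satisfies $\bdim(X_k) = \Theta(k)$ and $\adim(X_k) = 2^{\Omega(k)}$.
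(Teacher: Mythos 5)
Your proposal does not yet contain a proof: it is a sequence of attempted arguments, each of which you yourself abandon, followed by a plan whose one concrete instantiation fails. The final candidate family you propose, the vertices $u_T=(b,w_1,\dots,w_k)$ with $w_j=a$ for $j\in T$ and $w_j=c$ otherwise, cannot force an exponential lower bound: $u_T$ is adjacent to $s_j$ precisely when $j\in T$, so the $O(k)$ special vertices $s_j$ already give all $2^k$ of these vertices distinct adjacency representations, and no exponential number of landmarks is needed for them. The earlier twin-based attempts fail for the reasons you identify, and the closing sentence (``bound how many pairs a single landmark can separate'') is a strategy statement, not an argument. So as written there is a genuine gap: no family of vertices is exhibited for which the exponential lower bound is actually established.

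Ironically, the family you wrote down and then abandoned mid-sentence, $V^*=\{(c,w_1,\dots,w_k): w_j\in\{b,c\}\text{ for all }j\}$, does work, and completing that computation would give a correct proof. No vertex of $V^*$ has an $a$ in any coordinate, so none is adjacent to any $s_j$; the neighbors of $u=(c,w)$ are $(b,w)$ and the vertices obtained by toggling a $b$-coordinate to $a$, and a short check shows that distinct members of $V^*$ are non-adjacent and have disjoint closed neighborhoods. Hence the only vertices resolving a pair $\{u,u'\}\subseteq V^*$ are those in $N[u]\cup N[u']$, and since the $2^k$ sets $N[u]$ are pairwise disjoint, any adjacency resolving set must meet all but one of them, giving $\adim(X_k)\geq 2^k-1$. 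This is a different route from the paper, which instead argues by induction on $k$: it shows that any adjacency resolving set of $X_i$ must contain at least $2^i$ vertices outside $S_i$, using the decomposition of $X_k[V(X_k)-S_k]$ into three copies $H_1,H_2,H_3$ of the corresponding induced subgraph of $X_{k-1}$ (the $c$-copy forces $2^{k-1}$ landmarks by the inductive hypothesis, and $H_1\cup H_2$ forces another $2^{k-1}$ because a landmark in $H_1$ reaches at most one vertex of $H_2$). If you complete the $V^*$ argument you will have a proof that is arguably more direct than the paper's; as submitted, however, the proposal stops short of proving anything.
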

\begin{proof}
Let $k\in \mathds{Z}^+$ be given.
For $i\in \mathds{Z}^+ \cup \{0\}$, we define $S_i = \set{s_j\mid 0\leq j\leq i}$.

%For $0\leq i\leq k$, we let $A_i$ be an adjacency resolving set of graph $G_i$.
We claim that the following statement is true for all $i\in \mathds{Z}^+ \cup \{0\}$: for any adjacency resolving set $A_i$ of $X_i$, we have that $|(V(X_i)-S_i)\cap A_i| \geq 2^i$. We proceed to prove this claim by induction.

In the base case $i=0$, for any adjacency resolving set $A_0$ of $X_0 = P_4$, we have by \cref{thm: paths_adj} 
$$|(V(X_0)-\{s_0\})\cap A_0| \geq \floor{\frac{2(4)+2}{5}}-1 = 1.$$ 
Now we assume that $|(V(X_{k-1})-S_{k-1})\cap A_{k-1}| \geq 2^{k-1}$ for any adjacency resolving set $A_{k-1}$ of $X_{k-1}$.
%$$m_{k-1}:=\min_{A_{k-1}}\left|(V(G_{k-1})-S_{k-1})\cap A_{k-1}\right| \geq 2^{k-1}.$$

Let $H$ be $X_{k-1}[V(X_{k-1})-S_{k-1}]$, the subgraph induced in $X_{k-1}$ by $V(X_{k-1})-S_{k-1}$. 
The induced subgraph $X_k[V(X_{k})-S_{k}]$ contains three copies of $H$ as subgraphs. Let $H_1$, $H_2$, and $H_3$ be the copies of $H$ in $X_k[V(X_{k})-S_{k}]$ that are induced by the sets of vertices $\{(v,a)\mid v\in V(H)\}$,  $\{(v,b)\mid v\in V(H)\}$, and $\{(v,c)\mid v\in V(H)\}$, respectively.

Let $v\in V(H)$. Vertex $(v,c) \in V(H_3)$ is only adjacent to vertices in $V(X_k) - V(H_3)$ that are in $S_{k-1}$. In particular, the vertices $(v,c) \in V(H_3)$ and $u\in S_{k-1}$ are adjacent in $X_k$ if and only if $v$ and $u$ are adjacent in $X_{k-1}$. Thus, we have $\left|V(H_3)\cap A_{k}\right| \geq 2^{k-1}$ for any adjacency resolving set $A_k$ of $X_k$ by the inductive hypothesis.

If $\left|V(H_1)\cap A_{k}\right| = 0$, then we must have $\left|V(H_2)\cap A_{k}\right| \geq 2^{k-1}$, in order to distinguish all of the vertices in $H_2$. If instead $\left|V(H_1)\cap A_{k}\right| = x$ for some positive integer $x$, then we must have $\left|V(H_2)\cap A_{k}\right| \geq 2^{k-1}-x$, since every vertex in $V(H_1)\cap A_k$ reaches at most one vertex in $H_2$.
%, decreases $\left|V(H_2)\cap A_{k}\right|$ by at most 1. decreasing the number of vertices in $H_2$ that must be in an adjacency resolving set of $G_k$ by at most 1. 
Thus, any adjacency resolving set $A_k$ of $X_k$ must have at least $2^{k-1}$ vertices in $V(H_1)\cup V(H_2)$. 
We have $$\left|(V(X_{k})-S_{k})\cap A_{k}\right| = |V(H_3) \cap A_k| + |(V(H_1)\cup V(H_2))\cap A_k| \geq 2^k$$  for any adjacency resolving set $A_k$ of $X_k$, which completes the induction.

Thus, we have $|A_k| \geq 2^k$ for any adjacency resolving set $A_k$ of $X_k$, so $\adim(X_k) = 2^{\Omega(k)}$. 
\end{proof}

%We construct $G=G_k$ as follows (see \cref{figure: G1} for graph $G_1$).\\
%\begin{algorithm}[H]
%%\SetAlgoRefName{for Constructing Graph $G_k$}
% $G \leftarrow G_0$\;
% % a is v; b is v; c is v'' 
% $S_0\leftarrow \{s\}$\; %, where $s$ is one of the end vertices of $G$\;
% $i\leftarrow 1$\;
% \While{\textcolor{black}{$i\leq k$}}{
%  $G'\leftarrow G[V(G) \setminus  S_{i-1}] \square H$ \;
%%  $V' \leftarrow \{v, v', v'' \mid v\in V(G), v\not\in S_{i-1}\}$\;
%  $V(G') \leftarrow V(G') \cup \{s_i\} \cup S_{i-1}$\;
%  $S_i\leftarrow S_{i-1} \cup \{s_i\}$\;
%%  $E' \leftarrow \{uv, u'v', u''v'' \mid uv\in E(G), u\not\in S_{i-1}, v\not\in S_{i-1}\}$\;
%  $E(G') \leftarrow E(G') \cup \{u(v,a), u(v,b), u(v,c) \mid uv\in E(G), u\in S_{i-1}, v\in V(G)\setminus S_{i-1}\}$\;
%%  $E' \leftarrow E' \cup \{vv' \mid v\in V(G), v\not\in S_{i-1}\}$\;
%  $E(G') \leftarrow E(G') \cup \{s_i(v,a) \mid v\in V(G)\setminus S_{i-1}\}$\;
%  $G\leftarrow G'$\;
%  $i \leftarrow i+1$\;
% }
%% \caption{}
%\end{algorithm}

Combining \cref{lemma: compare1} and \cref{lemma: compare2}, we have proven \cref{thm: compare}. 
We note that our construction of graph $X_k$ has broadcast dimension that is asymptotically optimal in both its order and its adjacency dimension:

\begin{remark}
There does not exist a family of graphs $\set{G_k}_{k\in \mathds{Z}^+}$ with $\bdim(G_k) = \Theta(k)$ and $\adim(G_k) = 2^{\omega(k)}$ for every $k\in \mathds{Z}^+$ because $\bdim(G) = \Omega(\log n)$ for all graphs $G$ of order $n$ by \cref{thm: lowerbound_adim_bdim}.
\end{remark}

Our result in \cref{thm: compare} directly implies \cref{thm: asymptotica_lower_bound2} and resolves \cref{Question: 1} affirmatively.
Furthermore, we can also answer \cref{Question: 1} for acyclic graphs:

\begin{remark}
By \cref{corollary: adim_of_acyclic2}, there does not exist a family of acyclic graphs $\set{G_k}_{k\in \mathds{Z}^+}$ with $\bdim(G_k) = \Theta(k)$ and $\adim(G_k) = 2^{\Omega(k)}$ for every $k\in \mathds{Z}^+$.
\end{remark}

\section{Edge Deletion}
\label{Section: Edge_Deletion}
Throughout this section, we let $v$ and $e$, respectively, denote a vertex and an edge of a connected graph $G$ such that $G-v$ and $G-e$ are also connected graphs.
Geneson and Yi \cite{geneson2020broadcast} constructed families of graphs that demonstrated that both $\frac{\bdim(G)}{\bdim(G-v)}$ and $\bdim(G-v) - \bdim(G)$ can be arbitrarily large. 
In this section, we prove analogues of their results for the effect of edge deletion on the broadcast dimension of a graph.
We prove \cref{thm: edge_deletion1} and \cref{thm: edge_deletion2}, which state that
both $\bdim(G) - \bdim(G-e)$ and
$\bdim(G-e) - \bdim(G)-d_{G-e}(u,v)$ can be arbitrarily large for $e=uv\in E(G)$. We do so by finding families of graphs that demonstrate these results. We also show that the ratio $\frac{\bdim(G-e)}{\bdim(G)}$ is bounded from above by 3, proving \cref{thm: last}.

In the following theorem, we resolve \cref{Question: 2} affirmatively by constructing a family of graphs that uses ideas from a graph constructed by Eroh et al. in \cite{eroh2015effect}, which they used to show that $\dim(G) - \dim(G-e)$ can be arbitrarily large.

\begin{reptheorem}{thm: edge_deletion1}
The value $\bdim(G) - \bdim(G-e)$ can be arbitrarily large. 
\end{reptheorem}

\begin{figure}[t]
\centering
\begin{subfigure}{.5\textwidth}
  \centering
  \includegraphics[scale=.5]{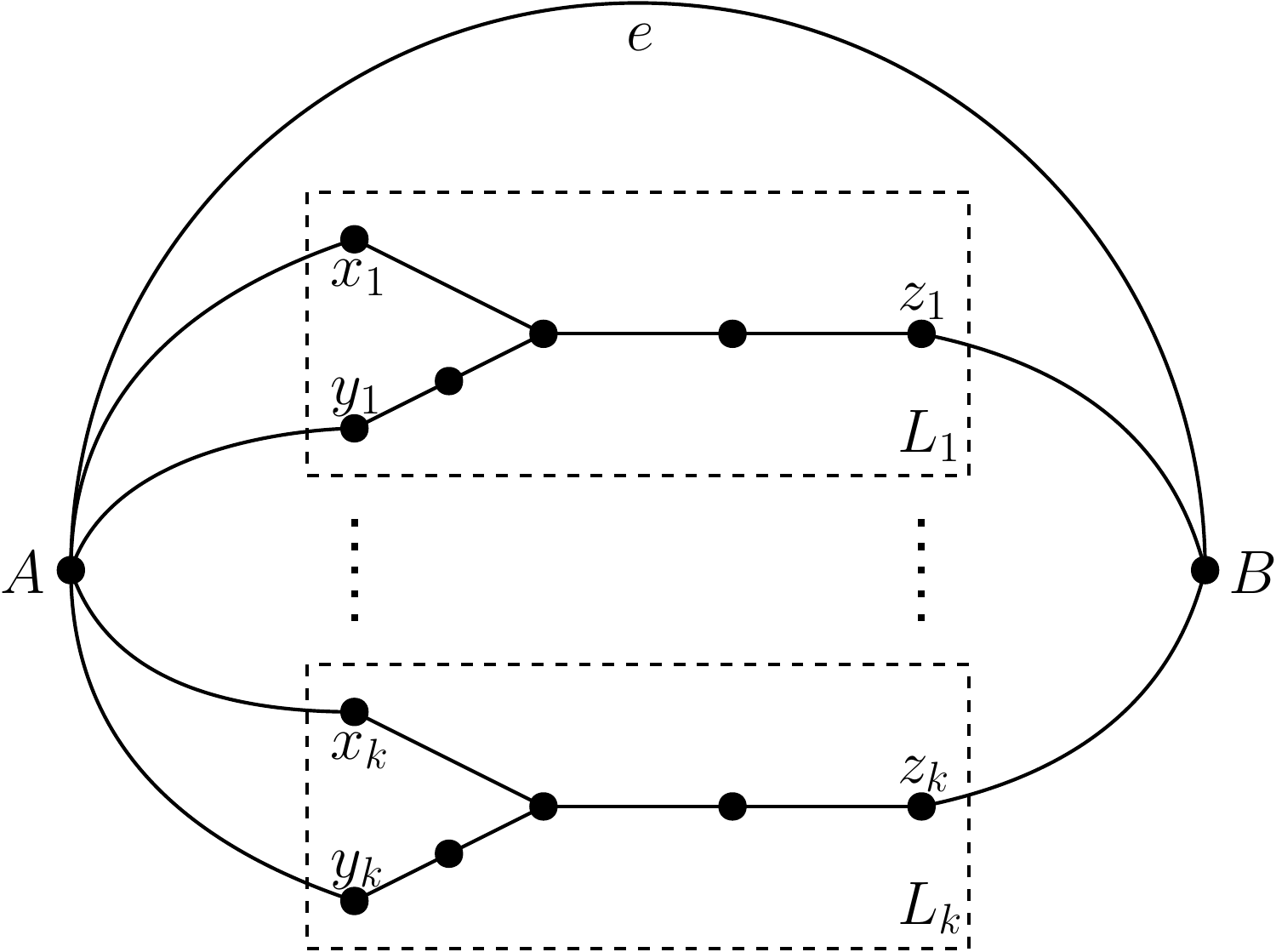}
\end{subfigure}%
\begin{subfigure}{.5\textwidth}
  \centering
  \includegraphics[scale=.5]{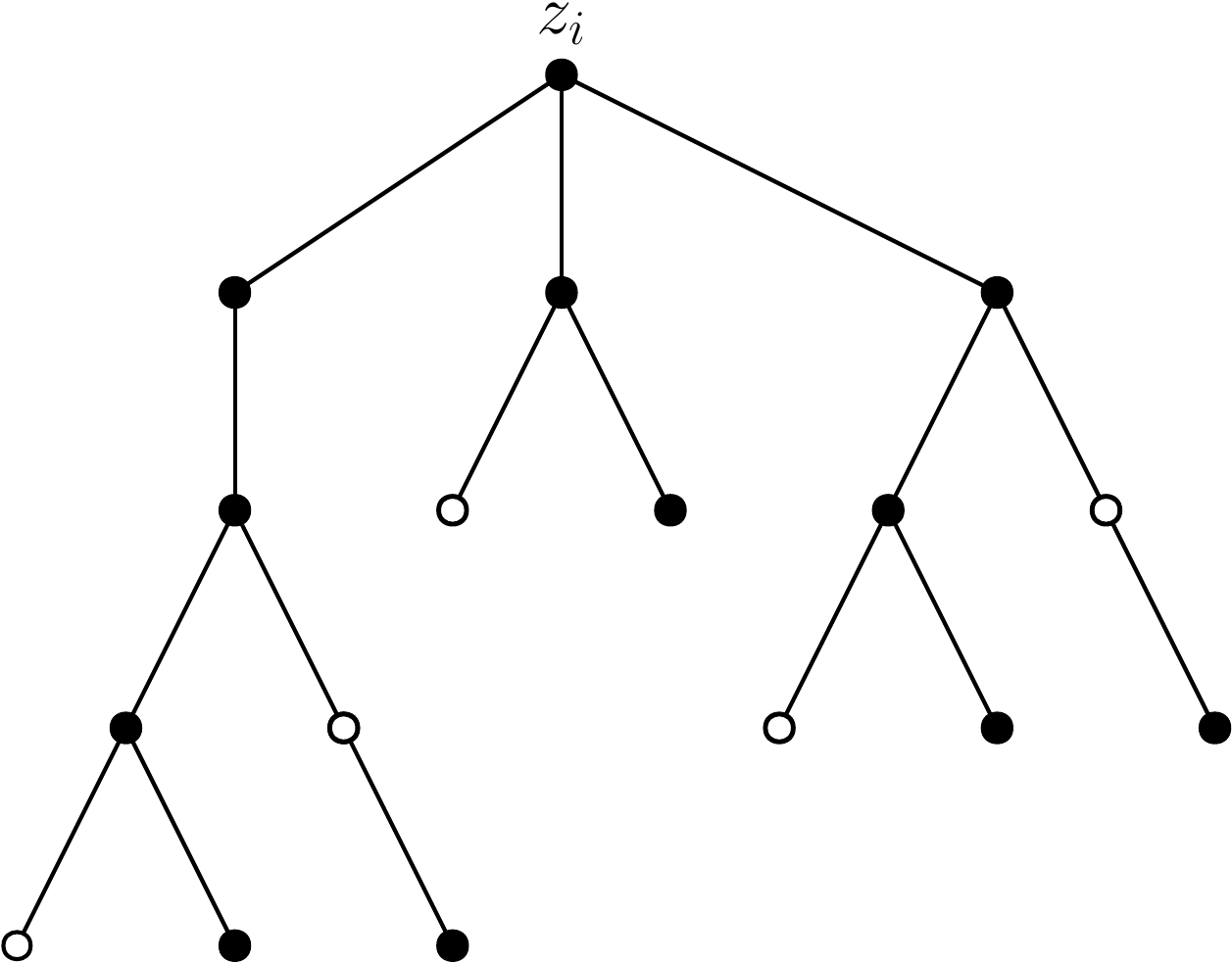}
\end{subfigure}
\begin{minipage}{.5\textwidth}
  \centering
  \vspace*{.5cm}
  $G_k$
\end{minipage}%
\begin{minipage}{.5\textwidth}
  \centering
    \vspace*{.5cm}
  $T_i$
\end{minipage}
\caption{A graph $G_k$ such that $\bdim(G_k) - \bdim(G_k-e) = \Omega(k)$. For every $1\leq i\leq k$, vertex $z_i$  is the root of a copy of tree $T_i$, shown on the right, so $|L_i| = 22$.}
\label{figure: DeleteEdge1}
\end{figure}

\begin{proof}
Let $k\in \mathds{Z}^+$, and let $G_k$ be the graph in \cref{figure: DeleteEdge1} with $e = AB$.  For each $1\leq i \leq k$, let \textit{layer} $L_i$ be the set of vertices indicated in \cref{figure: DeleteEdge1}. %We show that for sufficiently large $k$, $\bdim(G_k) - \bdim(G_k-e) = \Omega(k)$.

We define function $g:V(G_k-e)\rightarrow \mathds{Z}^+ \cup \{0\}$ as follows: 
$$
g(v)=
\begin{cases}
3 &\text{ if } v=A,\\
4 &\text{ if } v=z_i\text{ for }1\leq i \leq k,\\
1  &\text{ if } v \text{ is a vertex on tree }T_i\text{ shown with an open circle in \cref{figure: DeleteEdge1}},\\
0 &\text{ otherwise. }
\end{cases}
$$
Because $g$ is a resolving broadcast of graph $G_k-e$, we can upper bound the broadcast dimension of graph $G_k-e$: we have $\bdim(G_k-e) \leq c_g(G_k-e)=3 + 9k.$
%We will show that the sum of the values assigned to at least $k - O(1)$ layers of $G_k$ by any resolving broadcast $f$ of $G_k$ must be at least 10. 

Let $f:V(G_k)\rightarrow \mathds{Z}^+ \cup \{0\}$ be a resolving broadcast of the graph $G_k$.
For every pair of distinct vertices $u_1,u_2 \in V(G_k)$ with $d(u_1, A) = d(u_2,A)$ and $d(u_1, B) = d(u_2,B)$, at least one of $u_1$ or $u_2$ must be reached by a vertex in $\supp_{G_k}(f)$ that is on the same layer since otherwise we would have $b_f(u_1) = b_f(u_2)$.
Thus, at most $$\max_{u,v\in V(G_k)}(d(u,A)+1)(d(v,B)+1) + 1 = O(1)$$ layers of graph $G_k$ can have a vertex that is not reached by any vertex on the same layer. The following properties must hold for the remaining $k - O(1)$ layers $L_i$.

\begin{enumerate}
\item Every vertex $v\in L_i$ is reached by a vertex in $L_i\cap \supp_{G_k}(f)$.
\item We have $\supp_{G_k}(f) \cap (L_i - V(T_i)) \neq \emptyset$, since otherwise $b_f(x_i) = b_f(y_i)$. 
\item Any distinct $u,v\in V(T_i)$ with $d(u,z_i) =d(v,z_i)$ must be resolved by a vertex in $L_i$ since $d(u,A) = d(v,A)$ and $d(u,B) = d(v,B)$.
\end{enumerate}

\begin{figure}[h]
\centering
\includegraphics[scale=.5]{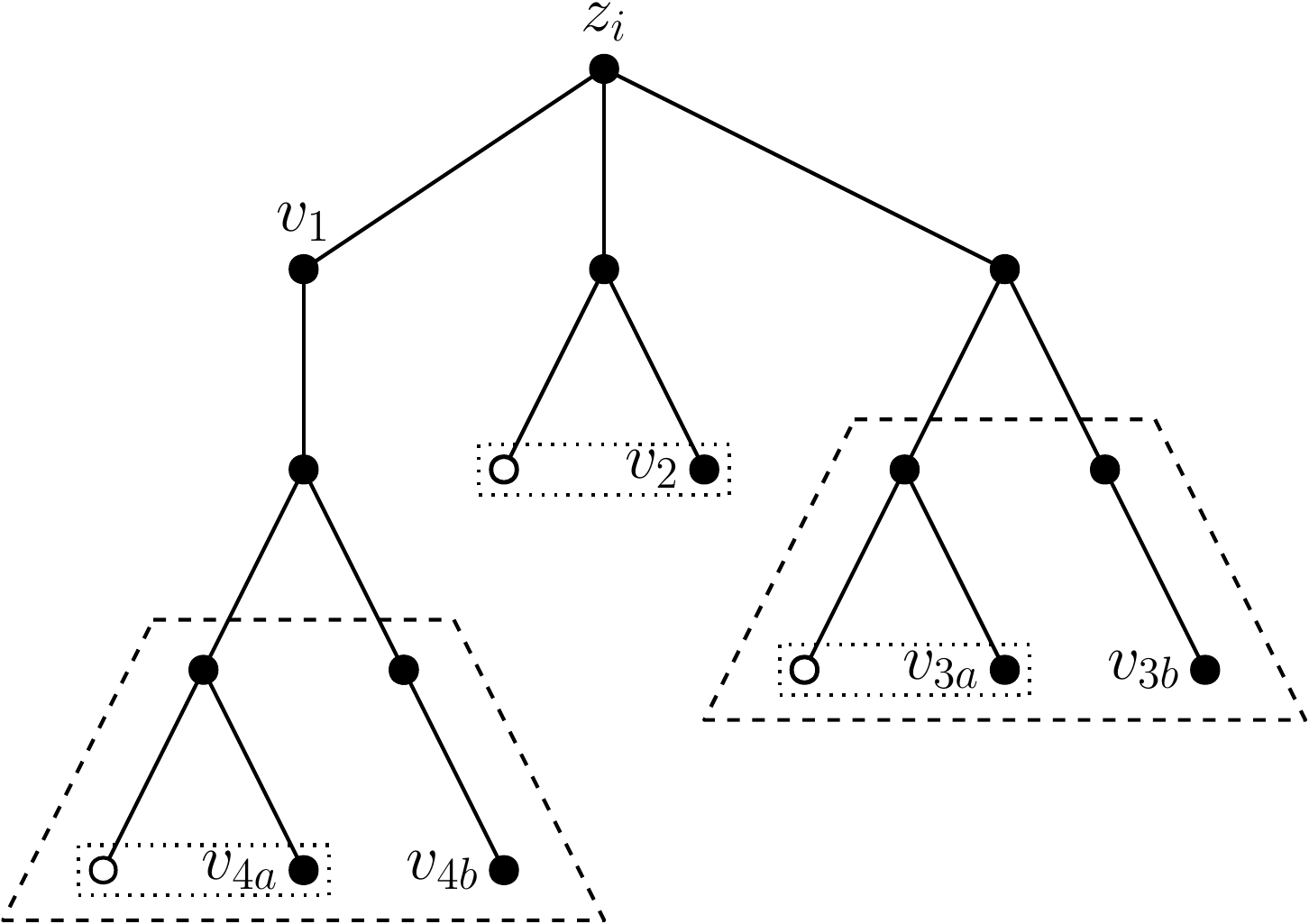}
\caption{Tree $T_i$ labeled for casework reference.}
 \label{figure: ForceFourLabeled}
\end{figure}
%We computationally check that a minimum total of 10 must be assigned to the vertices in such an $L_i$; this can also be verified by casework as follows. 
Refer to \cref{figure: ForceFourLabeled} for the remainder of the proof.
There are three pairs of twin vertices on tree $T_i$ (see dotted rectangular boxes). By \cref{obs: twin}, at least one of the vertices in each of these pairs must be in $\supp_{G_k}(f)$. Without loss of generality, let the three vertices that are denoted with an open circle be in $\supp_{G_k}(f)$.
The total value assigned to each of the two groups of five vertices identified by dashed trapezoidal boxes must be at least 2 in order for the three vertices that are the same distance away from $z_i$ in each of those groups to be distinguished.

Any assignment of a total value of 5 to $T_i$ subject to the above constraints leaves at least four unreached vertices: $v_1$, $v_2$, $v_{3a}$ or $v_{3b}$, and $v_{4a}$ or $v_{4b}$. These four vertices must be reached by assigning an additional total value of at least $4$ to the vertices on tree $T_i$ (in addition to the positive value assigned to some vertex in $L_i - V(T_i)$), or by assigning an additional total value of $c< 4$ to the vertices on tree $T_i$ and at least $5-c$ to a vertex $v\in L_i- V(T_i)$. In either case, a total value of at least 10 must be assigned to the vertices on such a layer $L_i$.

Because a total value of at least 10 is assigned to at least $k - O(1)$ layers of $G_k$ by any resolving broadcast $f$ of $G_k$, we have $\bdim(G_k) - \bdim(G_k-e) \geq 10k - 9k  - O(1)= \Omega(k)$.
\end{proof}

We will prove \cref{thm: edge_deletion2} by constructing a family of graphs that shows that $\bdim(G-e) - \bdim(G)$ can be arbitrarily larger than $d_{G-e}(u,v)$, thus showing that the bound proposed in \cref{Question: 3} can fail. Since we use both spider graphs and the graph $F_k$ (see \cref{def: F_k}) in the graph construction, we begin with two lemmas: a lemma about graphs containing spiders as subgraphs and a lemma about the graph $F_k$. %First, we introduce our notation for spiders:

As we will be working with a specific family of spider graphs in the proof \cref{thm: edge_deletion2}, we introduce our notation for spider graphs:

\begin{definition}
A tree is called a \textit{spider} if every vertex, except for one vertex known as the \textit{center vertex}, has degree at most two. A \textit{leg} of a spider graph is a path connected to the center vertex. We denote by $SP\paren{\ell_1^{(x_1)}, \dots, \ell_m^{(x_m)}}$ a spider of order $n$ with $x_i$ legs of length $\ell_i$ for every $1\leq i\leq m$, where $\ell_1\leq \ell_2\leq \dots \leq \ell_m$ and $1+ \sum_{i=1}^{m}\ell_ix_i = n$.
\end{definition}

\begin{lemma}
\label{lemma: spider}
For any integer $k>1$, let $G$ be a graph that contains spider $SP\paren{3k^{\paren{6k}}}$ with center $c$ as a subgraph such that $c$ is the only vertex on the spider that is adjacent to any vertex of the graph that is not on the spider. If a resolving broadcast $f$ of $G$ is efficient, then there exists a vertex $z\in \supp_G(f)$ with $f(z) -d(c,z) \geq 3k-2$.
%\begin{enumerate}
%\item ;
%\item every vertex $y$ in the legs of the spider other than vertex $z$ has $f(y) -d(c,y) =O(1)$.
%\end{enumerate}
\end{lemma}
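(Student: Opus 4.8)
The plan is to argue by contradiction: suppose $f$ is an efficient resolving broadcast of $G$ but every vertex $z \in \supp_G(f)$ satisfies $f(z) - d(c,z) \leq 3k - 3$. The intuition is that the spider $SP(3k^{(6k)})$ has $6k$ legs each of length $3k$, so it has a huge number of vertices ($1 + 18k^2$) that all sit "far from the rest of the graph" in the sense that any signal reaching deep into a leg either originates on that leg or must pass through the center $c$. If no broadcast vertex has a large "reach past $c$" value, then the broadcasts can only penetrate a bounded depth into each leg from outside, so each leg must essentially resolve itself, and I want to show the total cost of doing that across all $6k$ legs exceeds what an efficient broadcast can afford — contradicting efficiency by comparing against an explicit cheap resolving broadcast of $G$.

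First I would fix notation: write the legs as $P^{(1)}, \dots, P^{(6k)}$, each a path $c = w_0^{(j)}, w_1^{(j)}, \dots, w_{3k}^{(j)}$. For a broadcast vertex $z$, call $r(z) := f(z) - d(c,z)$ its \emph{residual reach at $c$}; by assumption $r(z) \leq 3k - 3$ for all $z \in \supp_G(f)$. The key structural observation is that for a vertex $z$ lying off leg $P^{(j)}$ (including $z = c$, or $z$ on another leg, or $z$ outside the spider entirely), the only way $z$ reaches $w_i^{(j)}$ is via a path through $c$, so $f(z) \geq d(z,c) + i$, i.e. $i \leq r(z) \leq 3k-3$. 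Hence the three deepest vertices of every leg, namely $w_{3k-2}^{(j)}, w_{3k-1}^{(j)}, w_{3k}^{(j)}$, can only be reached by broadcast vertices lying on leg $P^{(j)}$ itself. Now I would run the standard path-resolution counting (as in the proofs of \cref{thm: paths} / \cref{lemma: hat_bdim}) \emph{restricted to the tail of each leg}: to reach and distinguish the final three vertices of leg $j$ using only vertices on that leg requires a positive amount of cost concentrated near the tail of leg $j$, and because the tails of distinct legs are disjoint and share no broadcast vertices, these costs add up over all $6k$ legs. A careful bookkeeping (analogous to the gap argument in \cref{lemma: hat_bdim}, or simply: the tail of each leg needs total broadcast value at least $1$ from vertices on that leg, and in fact the "at most one unreached vertex" global budget means at most one leg can shirk) yields a lower bound of order $\Omega(k)$ on $c_f(G)$ coming just from the spider's legs — but I need the sharper claim that the cost is \emph{at least} what one would pay with a single large broadcast at $c$.

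The cleanest route: exhibit an explicit resolving broadcast $g$ of $G$ that beats any $f$ obeying the residual-reach bound. Take $g$ to agree with $f$ outside the spider, but on the spider replace everything by a single broadcast $g(c) = 3k$ (or $3k$ plus whatever is needed to still resolve the $c$-to-rest-of-graph interface), which reaches every vertex of every leg and distinguishes them by their distance-to-$c$ coordinate together with \emph{which} leg — wait, the legs are symmetric, so a broadcast at $c$ alone does \emph{not} distinguish $w_i^{(j)}$ from $w_i^{(j')}$. So I must keep, for each leg, a minimal amount of "local" information. The correct comparison broadcast is: for each leg $j$, put $g$-value $1$ on, say, $w_1^{(j)}$ (or better, use the $F_k$-style trick of a single broadcast near each leg's base), plus one global large broadcast at $c$ of value $3k$; total spider cost $\approx 3k + 6k = 9k$. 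Meanwhile, an $f$ with all residual reaches $\leq 3k-3$ must, by the tail argument, spend on \emph{each} leg an amount that, summed, exceeds $9k$ — here is where I'd need the quantitative heart of the argument: reaching the bottom $\geq 2$ (really $\Omega(k)$, since $3k-3$ vs $3k$ leaves only a constant gap... hmm, actually the gap is only $3$, so the bottom $3$ vertices) of each leg from within that leg costs at least roughly $2$ per leg by the path lower bound, giving $\approx 12k > 9k$. \textbf{I expect the main obstacle} to be making this last inequality genuinely tight: the residual-reach bound only forbids reaching the bottom \emph{three} vertices of a leg from outside, so from outside one can reach up to $w_{3k-3}^{(j)}$, and I must show that resolving the remaining tail of three-ish vertices of each of $6k$ legs, with vertices confined to those tails, costs strictly more in aggregate than the $9k$-ish spent by $g$ — this requires the careful per-leg cost accounting (two units per leg, via twin vertices at the tips and the distinguishing constraint one step in) plus handling the single "free" unreached vertex and the $O(1)$ slack at the spider's interface with the rest of $G$. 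The bound $3k^{(6k)}$ is clearly chosen (legs long enough that $3k-3 < 3k$ bites; enough legs that $2 \cdot 6k > 9k$ with room to spare) precisely to absorb these lower-order terms, so once the per-leg cost lemma is in hand the contradiction follows by arithmetic.
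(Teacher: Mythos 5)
Your opening move matches the paper's: under the contrary assumption every broadcast vertex has residual reach $f(z)-d(c,z)\leq 3k-3$ at $c$, so the three deepest vertices of each leg can only be reached from within that leg. But the quantitative core of your argument has two genuine gaps. First, your comparison broadcast $g$ (value $3k$ at $c$ plus value $1$ at the base vertex $w_1^{(j)}$ of each leg, total about $9k$) is not a resolving broadcast: for $i\geq 3$ the vertices $w_i^{(j)}$ and $w_i^{(j')}$ on two different legs both have distance $i$ to $c$ and truncated distance $2$ to every base vertex, so their broadcast representations coincide. Distinguishing same-depth vertices on different legs genuinely costs $\Theta(k)$ per leg (e.g.\ value $1$ on every third vertex of the leg), so any valid comparison broadcast costs $\Theta(k^2)$ on the spider, not $9k$, and your arithmetic $12k>9k$ collapses. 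Second, your per-leg lower bound of ``about $2$, via twin vertices at the tips'' is unjustified --- the legs have no twin vertices, each tip being a leaf whose closed neighborhood differs from its neighbor's --- and is in any case far too weak to beat a correct comparison broadcast.

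The paper's accounting is much tighter and hinges on extending your ``resolved from within the leg'' observation from the bottom three vertices to \emph{all} depth classes: for each depth $i\in\{1,\dots,3k\}$, at most one of the $6k$ vertices at depth $i$ can fail to be reached by a vertex on its own leg (two such failures would give identical representations), so at least $6k-3k=3k$ legs have every one of their $3k$ vertices reached from within the leg. Since a vertex of value $f(v)$ reaches at most $2f(v)+1\leq 3f(v)$ same-leg vertices, each such leg carries total value at least $k$, and a separate argument rules out exactly $k$, forcing at least $k+1$. The replacement broadcast spends exactly $k$ on each of these legs plus $3k-2$ at $c$, and the contradiction with efficiency is the razor-thin inequality $3k\cdot 1>3k-2$. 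Without the all-depths counting step, your proposal cannot produce a margin of this kind.
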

\begin{proof}

For the sake of contradiction, consider an efficient resolving broadcast $f$ where there is not a $z\in \supp_G(f)$ with $f(z) -d(c,z) \geq 3k-2$.  On each leg of the spider $SP\paren{3k^{\paren{6k}}}$, the three vertices farthest from $c$ are only reached by vertices on the same leg.

Let $u,v\in V(G)$ be two distinct vertices on the legs of the spider with $d(u,c) = d(v,c)$. If neither $u$ nor $v$ are reached by a vertex that is on the same leg as $u$ or $v$, then $b_f(u) = b_f(v)$. Thus, every vertex on the legs of the spider, except at most $3k$ of them (one vertex of each distinct distance from $c$) must be reached by another vertex on the same leg. On at least $6k-3k=3k$ legs of the spider, all vertices need to be reached by a vertex on the same leg; let $L$ be the set of these legs.

A vertex $v\in \supp_G(f)$ on a leg of the spider can reach at most $2f(v) + 1$ vertices on the same leg, and we have that $2f(v)+1 \leq 3f(v)$ with equality if and only if $f(v)=1$.
Because all of the vertices on a leg $\ell\in L$ need to be reached by a vertex on $\ell$, the total value $v_{\ell}$ assigned to the $3k$ vertices on $\ell$ must be at least $k$.
If $v_{\ell} = k$, then we must have the following assignment: the vertices on $\ell$ that are distance $3i-1$ from $c$ for $1\leq i \leq k$ are assigned a value of 1, and the rest of the vertices on $\ell$ are assigned 0. However, with this assignment, there are two vertices on $\ell$ that have the same broadcast representation: the vertex that is distance $3k-2$ from $c$ and the vertex that is distance $3k$ from $c$ are only reached by the vertex between them. Thus, $v_{\ell} \geq k+1$ for every $\ell\in L$.

Consider function $f':V(G)\rightarrow \mathds{Z}^+ \cup \{0\}$ defined as follows. Let $f'(v) = f(v)$ for all vertices that are not on a leg in $L$, and let $f'(c)=3k-2$. The vertices on a leg in $L$ that are distance $3i-1$ from $c$ for $1\leq i \leq k$ are assigned a value of 1, and the rest of the vertices on a leg in $L$ are assigned 0. We note that $f'$ is a resolving broadcast of $G$. Moreover, we have $c_{f'}(G) < c_{f}(G)$ because $f'(c) - f(c) \leq 3k-2$, and for each of the $3k$ legs $\ell\in L$, we have $\sum_{v\in \ell} f(v) -  \sum_{v\in \ell} f'(v) \geq 1$. This contradicts the efficiency of $f$.
\end{proof}

\begin{lemma}
\label{lemma: F_k}
For any resolving broadcast $f$ of $F_k$ (see \cref{def: F_k}) with $f(v_k) \geq k$, we have $$\sum_{v\in V(F_k)} f(v) \geq f(v_k) + 2k - O(1).$$% in order for $b_f(x) \neq b_f(y)$ for every distinct $x,y \in V(F_k)$ with $d(x,v_k) = d(y,v_k)$.
\end{lemma}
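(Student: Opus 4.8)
\textbf{Proof proposal for \cref{lemma: F_k}.}

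The plan is to analyze how a resolving broadcast $f$ of $F_k$ distinguishes the vertices lying on the $k$ pendant paths $P_1, \dots, P_k$, where $P_i$ hangs off $v_i$ on the spine $L_k = v_0 \dots v_k$. The key structural observation is that $v_k$, even with the generous value $f(v_k) \ge k$, can only reach vertices within the ball of radius $f(v_k)$ around it, and crucially cannot resolve two vertices that are equidistant from $v_k$. So the proof will isolate, for each $i$, a pair (or family of pairs) of vertices on or near $P_i$ that are equidistant from $v_k$ and from every other "far" transmitter, and conclude that each such $P_i$ forces $f$ to spend a nearly constant amount of value locally, beyond whatever $v_k$ contributes.

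Concretely, first I would set up notation: let $P_i$ have some length $\ell_i \ge 1$ with vertices $w_{i,1}, \dots, w_{i,\ell_i}$ where $w_{i,1}$ is adjacent to $v_i$. Observe that for a vertex $w_{i,j}$ on $P_i$, its distance to $v_k$ is $j + (k-i)$, and its distance to $v_0$ is $j + i$; more generally, its distance to any vertex $z$ lying on the spine or on another path $P_{i'}$ with $i' \ne i$ is a fixed affine function of $j$ that does not depend on $z$'s side beyond passing through $v_i$. The second step is the counting heart: a transmitter $z \in \supp(f)$ that does not lie on $P_i \cup \{v_i\}$ reaches $w_{i,j}$ only for $j$ in an interval, and all vertices $w_{i,j}$ it reaches with the \emph{same} distance-to-$v_i$ value get the same $z$-coordinate in their broadcast representation — but since the $w_{i,j}$ form a path, distinct $j$ give distinct distances to $v_i$, so an external transmitter $z$ with value $f(z)$ can distinguish at most... this is where I must be careful: an external $z$ truncates at $f(z)+1$, so it fails to separate any two $w_{i,j}$ both at distance $> f(z)$ from $z$. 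The upshot I want is that to distinguish all $\ell_i$ vertices of $P_i$ from each other, the transmitters on $P_i \cup \{v_i\}$ together with the "reach" supplied by external transmitters must cover $P_i$ in a resolving way, and a standard path-resolving argument (à la \cref{thm: paths}, or directly) shows the local cost on $P_i$ is at least $\ell_i/2 - O(1)$ unless an external transmitter reaches deep into $P_i$ — and the only external transmitter that can reach deep into $P_i$ is $v_k$ (or something near it) via a path through $v_i$.

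The third step is to total things up. Let $r = f(v_k)$. The transmitter $v_k$ reaches $w_{i,j}$ iff $j + (k-i) \le r$, i.e. into $P_i$ up to depth $r - (k-i)$. Summing the "uncovered-by-$v_k$" depths over all $i$, combined with the fact that the pendant paths plus spine have $|V(F_k)| $ vertices, forces $\sum_{v \notin \{v_k\}} f(v) \ge (\text{total leftover depth})/2 - O(k\cdot\text{const}) $, and since $\sum_i (\text{depth covered by } v_k \text{ in } P_i) \le \sum_{i} (r-(k-i))_+ $ which is itself $\le$ (something controlled by $r$), the bookkeeping yields $\sum_v f(v) \ge f(v_k) + 2k - O(1)$. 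The hypothesis $f(v_k) \ge k$ is what guarantees $v_k$'s ball actually reaches all the way to $v_0$ and into every $P_i$, making $v_k$ the unique relevant long-range transmitter and letting me charge the remaining $2k - O(1)$ to the $k$ legs (roughly $2$ per leg: one unit because the leg's deepest equidistant pair needs a local resolver, and a second because of the twin-type obstruction forcing $v_i$-adjacent structure, analogous to \cref{obs: twin} and the $P_n$ analysis).

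The main obstacle I expect is the careful case analysis showing that \emph{no combination} of external transmitters other than $v_k$ can substitute for local cost on the legs — in particular ruling out that a cleverly placed transmitter on the spine near $v_i$, or on a neighboring leg $P_{i\pm 1}$, cheaply helps resolve $P_i$. Handling this cleanly will likely require an exchange/monovariant argument in the spirit of \cref{lemma: at_most_one}: given an efficient-type $f$, push value toward a canonical form (value concentrated at $v_k$ plus at most $2$ units charged per leg, placed at fixed residues along each leg as in the mod-$5$ pattern of \cref{thm: paths}), showing each rewrite does not increase $\sum_v f(v)$ and preserves the resolving property, and then read off the bound from the canonical form.
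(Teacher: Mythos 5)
There is a genuine gap, and it starts with the target of your ``counting heart.'' Since $f(v_k)\geq k$ and the eccentricity of $v_k$ in $F_k$ is exactly $k$, the transmitter $v_k$ reaches \emph{every} vertex with no truncation, and distinct vertices on the same leg $P_i$ have distinct distances to $v_k$; so distinguishing the vertices of a single leg from one another costs nothing beyond $f(v_k)$. Your claimed local cost of $\ell_i/2-O(1)$ per leg is therefore never forced (the ``unless an external transmitter reaches deep into $P_i$'' escape clause always triggers via $v_k$), and it is also inconsistent with the rest of your sketch: summed over legs it would give $\Theta(k^2)$, contradicting $\bdim(F_k)=O(k)$, while your final accounting silently switches to ``$2$ per leg'' without a derivation. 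The quantity that actually has to be paid for is \emph{inter}-leg resolution: the level sets $\{u: d(u,v_k)=c\}$ for $c=0,\dots,k$ are diagonals of size $\Theta(k)$ whose members all share the $v_k$-coordinate, and the whole content of the lemma is that splitting these $\Theta(k^2)$ vertices into singletons costs an extra $2k-O(1)$. A per-leg, pair-isolating charge does not obviously capture this, and you give no mechanism for it. Separately, your reduction to ``$v_k$ is the unique relevant long-range transmitter'' is unjustified: nothing prevents another vertex $w$ (e.g.\ near $v_0$, or deep on some leg) from carrying a huge value and reaching most of the graph, and this situation needs its own argument.

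For comparison, the paper argues globally rather than leg by leg. It switches on the transmitters one at a time starting from $v_k$ alone, which yields exactly $k+1$ distinct broadcast representations, and tracks the number $B_{F_k}(f')$ of distinct representations. In the main case (every $w\neq v_k$ has bounded excess $f(w)-d(w,v_{p(w)})\leq\lceil k/2\rceil$), it shows each update $f'(w)\leftarrow f(w)$ creates at most $f(w)\left(\frac{k}{4}+O(1)\right)$ new representations, so reaching $|V(F_k)|=\frac{k^2}{2}+O(k)$ forces at least $2k-O(1)$ of additional value; the remaining case, where some $w\neq v_k$ has large excess, is handled directly by showing $w$ must essentially span the graph and the near side still costs $p(w)-O(1)$ more. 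If you want to salvage your outline, you would need to replace the intra-leg path argument with a bound on how many of the $v_k$-diagonal classes a single unit of broadcast value can refine, which is exactly the $k/4$-per-unit estimate at the core of the paper's Case 1.
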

\begin{proof}
%Let $P = \{v_i: 0\leq i\leq k\}\subset V(F_k)$.
Let $f$ be a resolving broadcast of $F_k$ that minimizes $\sum_{v\in V(F_k)} f(v)$, under the constraint that $f(v_k) \geq k$.
For $u\in V(F_k)$, we define $p(u) := \argmin_{i\in [0,k]}d\paren{u,v_i}$.

%Let $w\in \supp_{F_k}(f)$. For every pair of distinct vertices $w_1, w_2 \in V(F_k)$ with $p(w_1) < p(w)$, $p(w_2) < p(w)$, and 
%%$d_{f(w)}(w_1, w) = d_{f(w)}\paren{w_2, w}$
%$d(w_1, w) = d\paren{w_2, w}$, at least one of $w_1$ or $w_2$ must be reached by a vertex $u\in \supp_{F_k}(f)$ with $p(u) < p(w)$ in order to have $b_f(w_1) \neq b_f(w_2)$.
%Thus, all but at most $f(w)+1$ of the vertices reached by $w$ whose path to $w$ goes through $v_{p(w)-1}$ need to be reached by another vertex $u\in \supp_{F_k}(f)$ with $p(u) < p(w)$ in order to be distinguished. %All of the vertices reached by $w\in \supp_{F_k}(f) - \{v_k\}$ with $p(u) \geq p(w)$ will be resolved by $w$ or $v_k$.

%\begin{figure}[t]
%\centering
%\begin{subfigure}{.5\textwidth}
%  \centering
%\includegraphics[scale=.5]{figures/F3_geo}
%\end{subfigure}%
%\begin{subfigure}{.5\textwidth}
%  \centering
%\includegraphics[scale=.5]{figures/F3_geo_}
%\end{subfigure}
%\caption{A geometric interpretation of graph $F_k$ showing two possible regions (shaded regions) where every pair of vertices are resolved by either $v_{p(w)}$ or $v_k$.}
% \label{figure: F3_geo}
%\end{figure}

\noindent\textbf{Case 1.}  $f(w)- d\paren{w,v_{p(w)}} \leq \ceil{\frac{k}{2}}$ for all $w \in V(F_k) - \{v_k\}$.\\
Define $f': V(F_k)\rightarrow \mathds{Z}^+ \cup \{0\}$ such that $f'(v_k) = f(v_k)$ and $f'(v) = 0$ for all other $v\in V(F_k)$. The number of unique broadcast representations of the vertices of $F_k$ with respect to function $f'$, denoted $B_{F_k}(f')$, is $k+1$.
Updating $f'(w)\leftarrow f(w)$ for any $w \in \supp_{F_k}(f) - \{v_k\}$
%distinguishes all of the vertices $u$ that $w$ reaches with $p(u)\geq p(w)$ and
introduces at most $f(w)$ new unique broadcast representations to the vertices $u$ with $p(u) < p(w)$. Thus, every update $f'(w)\leftarrow f(w)$ for some $w \in \supp_{F_k}(f) - \{v_k\}$ increases $B_{F_k}(f')$ by at most $$O\paren{f(w)}+ \sum _{i=1}^{f(w)} i \leq  f(w)\paren{\frac{k}{4}+ O(1)}.$$ 
Since we must have $\frac{k^2}{2} + O(k)$ unique broadcast representations, the lemma holds in this case.

%Unnecessary%Every vertex $w\in \supp_G(f)$ with $p(w)\geq \floor{\frac{3k}{4}}$ can introduce at most $f(w)(\frac{k}{4} + O(1))$ new broadcast representations. 
%WRONG%very vertex with $f(w) - d(w,v_k)\leq 0$ can introduce at most $\frac{f(w)^2}{4} + O(1)$ new broadcast representations.

\noindent\textbf{Case 2.} There is a vertex $w
\neq v_k$ with $f(w) - d\paren{w,v_{p(w)}}>  \ceil{\frac{k}{2}}$.\\
Let $t$ be the vertex on $F_k$ farthest from vertex $v_0$.
We must have $d(w,t) -f(w)=O(1)$, since otherwise there would be $\omega(1)$ vertices $u$ with $p(u) \geq p(w)$ not reached by $w$. These vertices would be most efficiently distinguished by increasing $f(w)$, contradicting the efficiency of $f$. 
The vertices $u$ with $p(u) < p(w)$ must be distinguished with an additional total cost of at least $p(w) - O(1)$. Thus, 
%\begin{align*}
%\sum_{v\in V(F_k)} f(v) \geq f(v_k) + d(w,t) + p(w) - O(1) \geq f(v_k)+2k - O(1). &\qedhere
%\end{align*}
\begin{align*}
\sum_{v\in V(F_k)} f(v) \geq f(v_k) + d(w,t) + p(w) - O(1) \geq f(v_k)+2k - O(1),
\end{align*}
as desired.
\end{proof}

With \cref{lemma: spider} and \cref{lemma: F_k}, we can prove \cref{thm: edge_deletion2}.

\begin{reptheorem}{thm: edge_deletion2}
The value $\bdim(G-e) - \bdim(G)$ can be arbitrarily larger than $d_{G-e}(u,v)$, where $e=uv\in E(G)$.
\end{reptheorem}

\begin{proof}
For integer $k\geq 2$, let $H_k$ be the graph in \cref{figure: DeleteEdge2_}, and let $e=v_{i}v_{i+1}$, where $i = \floor{\frac{3k-2}{2}}$. Let $S_1$ and $S_2$ be the spider $SP\paren{3k^{(6k)}}$ centered at $v_0$ and $v_{3k -2}$, respectively.
For $u\in V(H_k)$, we define $p(u) := \argmin_{i\in [0,k]}d_{H_k}(u,v_i)$.
We will show that for sufficiently large $k$, we have $$\bdim\paren{H_k-e} - \bdim(H_k) = d_{H_k-e}(v_{i},v_{i+1})+\Omega(k) = \frac{k}{2} +\Omega(k).$$

\begin{figure}[b]
\hspace*{.6cm}\includegraphics[scale=1]{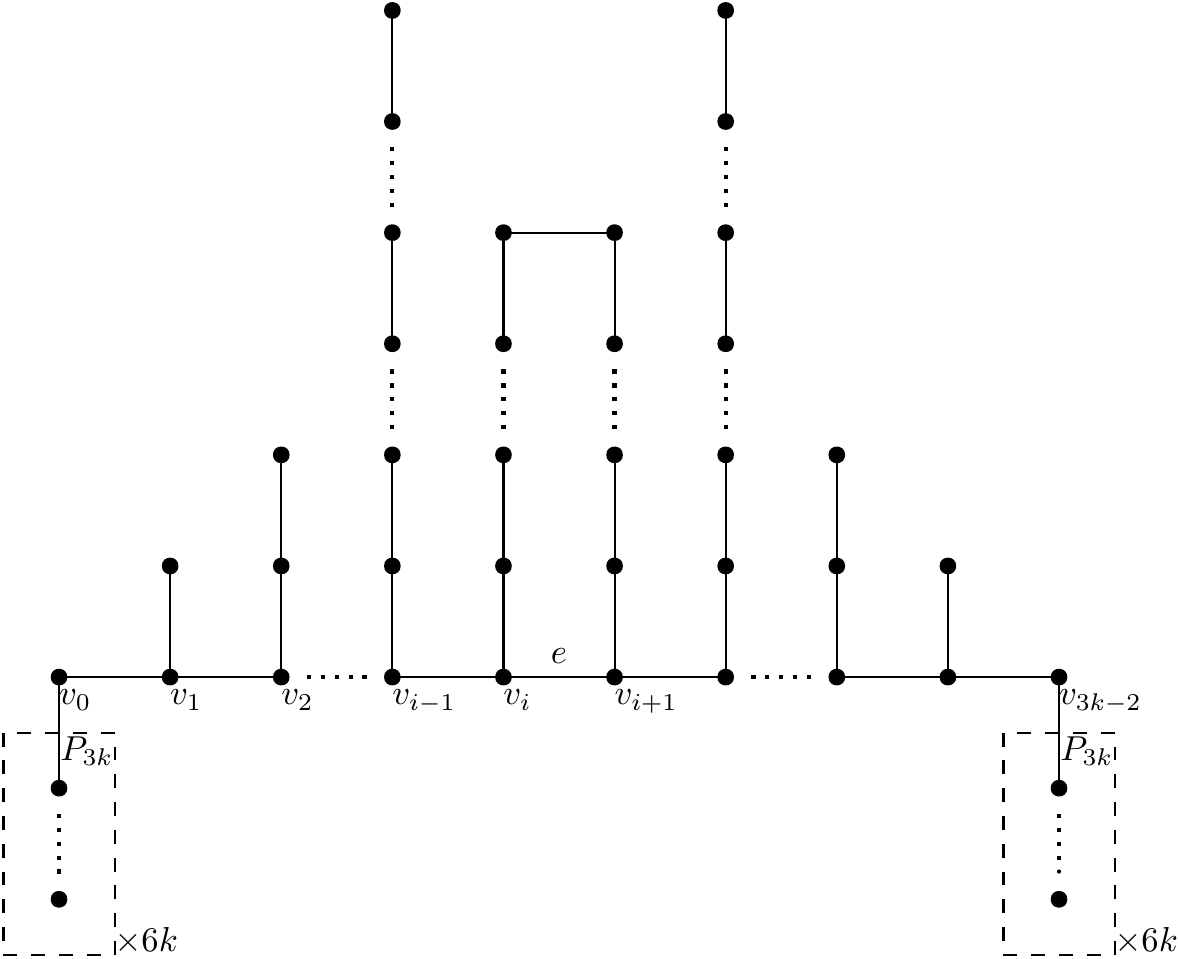}
\caption{A graph $H_k$ such that $\bdim(H_k-e) - \bdim(H_k)$ can be arbitrarily larger than $d_{H_k-e}(v_i,v_{i+1})$, where $e=v_iv_{i+1}$ and $i = \floor{\frac{3k-2}{2}}$.
The vertices $v_0, \dots, v_{3k-2}$ are on a path. Additionally, each $v_j$ with $1\leq j \leq i-1$ is connected to a path $P_j$; each $v_j$ with $i+2\leq j \leq 3k-3$ is connected to a path $P_{3k-2-j}$, and vertices $v_i$ and $v_{i+1}$ are on a cycle of length $\floor{\frac{k}{2}}$. Finally, $v_0$ and $v_{3k-2}$ are both centers of a copy of spider $SP\paren{3k^{(6k)}}$.}
 \label{figure: DeleteEdge2_}
\end{figure}

Let $B = \bdim\paren{SP\paren{3k^{\paren{6k}}}}$. Let $g:V(H_k)\rightarrow \mathds{Z}^+ \cup \{0\}$ be the function that applies an efficient resolving broadcast of $SP\paren{3k^{(6k)}}$ to $S_1$ and $S_2$ on graph $H_k$. 
By \cref{lemma: spider}, there are vertices $z_1'$ on $S_1$ and $z_2'$ on $S_2$ with $g(z_1') - d_{H_k}(v_0, z_1')\geq 3k-2$ and $g(z_2') - d_{H_k}(v_{3k-2}, z_2')\geq 3k-2$. 
Function $g$ is a resolving broadcast of $H_k$ since every pair of distinct vertices in $V(H_k)$ that are on the same spider is clearly resolved, and every other pair of vertices is resolved by either $z_1'$ or $z_2'$. Thus, $\bdim(H_k)\leq 2B$. %We note that $g(z_1) = 3k \pm O(1)$ and $g(z_2) = 3k \pm O(1)$.

Let $f$ be an efficient resolving broadcast of the graph $H_k-e$. By \cref{lemma: spider}, we must have vertices $z_1,z_2\in \supp_G(f)$ with $f(z_1) - d_{H_k-e}(v_0, z_1)\geq 3k-2$ and $f(z_2) - d_{H_k-e}(v_{3k-2}, z_2)\geq 3k-2$. %We do casework on the location of vertices $z_1$ and $z_2$ and show that in all cases, we must have $$%\paren{\sum_{v\in V(H_k - e)} f(v)} - 2B
%\bdim(H_k-e) - \bdim(H_k) = \frac{k}{2} +\Omega(k).$$

%\noindent\textit{Case 1: $z_1$ and $z_2$ are the same vertex $z$, and $z$ is on a leg of one of the spiders.}
%By \cref{lemma: spider}, $f(z) \geq d_{H_k-e}(v_0,v_{3k-2})k + 3k \pm O(1) = 6.5k \pm O(1)$. The sum of the values assigned to the spider subgraph containing $z$ is at least $B + 6.5k - 3k \pm O(1)$. The sum of the values assigned to the other spider is at least $B - 3k + k\pm O(1)$: it is $3k \pm O(1)$ less than $B$ because vertex $z_2$ is not on this spider and increased by $k\pm O(1)$ because $z$ is not on one of its legs (due to \cref{fact: spider}). Thus, $\bdim(H_k-e) - \bdim(H_k) \geq 6.5k - 3k -3k + k = 1.5k \pm O(1) =\frac{k}{2} +\Omega(k)$.

\noindent\textbf{Case 1.} There does not exist a vertex $z$ with $$f(z) - d_{H_k-e}(v_0, z)\geq 3k-2 \quad\text{and}\quad f(z) - d_{H_k-e}(v_{3k-2}, z)\geq 3k-2.$$
In this case, $z_1$ and $z_2$ are distinct vertices.
We define $c_1 := f(z_1) - 3k +2$ and $d_1 :=p(z_1)$, and we similarly define $c_2 := f(z_2) - 3k +2$ and $d_2:= 3k-2-p(z_2)$. Note that $c_1 \geq d_1$ and $c_2 \geq d_2$ by  \cref{lemma: spider}. 

If $d_1>0$, let $T_1$ be the $F_{d_1}$ subgraph induced by the vertices $u$ with $p(u)\leq d_1$ that are not on a leg of spider $S_1$. Otherwise, let $T_1$ be the graph that consists of the singular vertex $z_1$.
Let $y_1 = \argmax_{V(S_1) - \{v_0\}}(f(y) - d(y, v_0))$. 
%We note that $f(y_1) \geq d(y_1, v_0)$. 
By \cref{lemma: F_k}, we must have 
\begin{equation} \label{eqn: F_k}
f(y_1) - d(y_1,v_0)+ \sum_{v\in V(T_1)} f(v) \geq f(z_1) + 2d_1 + \max\set{f(y_1) - d(y_1,v_0) -2d_1, 0} - O(1)
\end{equation}
in order to distinguish the vertices of $T_1$. 

On spider $S_1$, assigning vertex $y_1$ the value $f(y_1)$ only distinguishes at most $d(y_1, v_0) + f(y_1)$ vertices on the same leg of $S_1$. 
In an efficient resolving broadcast of $S_1$, those $d(y_1, v_0) + f(y_1)$ vertices would have instead been distinguished with a total cost of at most $\ceil{\frac{d(y_1, v_0) + f(y_1)}{3}}$ by assigning a value of 1 to every third vertex (see proof of \cref{lemma: spider}). 
Thus, we can obtain the following bound on the total value assigned to the vertices in set $U_1 = V(S_1) - \{v_0, y_1, z_1\}$:
\begin{equation} \label{eqn: spider}
 \sum_{v\in U_1} f(v) \geq B - g(z_1') - \frac{d(y_1, v_0) + f(y_1) }{3} - O(1).
\end{equation}

Using \eqref{eqn: F_k} and \eqref{eqn: spider}, we lower bound the total value assigned to all of the vertices $u$ with $p(u) \leq d_1$:
\begin{align*}
& f(y_1) + \sum_{v\in V(T_1)} f(v) + \sum_{v\in U_1} f(v) \\
\geq \text{ }  &{f(z_1)} +2d_1+\max\set{ f(y_1)- d(y_1,v_0)- 2d_1, 0}+d(y_1,v_0) + {B -3k - \frac{d(y_1, v_0) + f(y_1) }{3}} - O(1) \\
\geq \text{ }  & c_1+2d_1+d(y_1,v_0) + {B  - \frac{d(y_1, v_0) + \paren{d(y_1,v_0)+ 2d_1 }}{3}} - O(1) \\
\geq \text{ }& {c_1} +\frac{ 4d_1 }{3}
+ B   - O(1). % && f(y_1) \leftarrow 2d_1
\end{align*}

In this case, the sum of the values assigned to vertices $u$ with $p(u) > d_1$ must be at least $B$ in order to distinguish the vertices of spider $S_2$. Thus, we have $$\bdim(H_k-e) - \bdim(H_k) \geq \paren{\sum_{v\in V(H_k - e)} f(v)} - 2B \geq c_1 + \frac{4}{3}d_1 - O(1).$$

If $d_1\geq \frac{k}{4}$, then we have 
$$\bdim(H_k-e) - \bdim(H_k) \geq c_1 + \frac{4d_1}{3} - O(1)\geq  \frac{7d_1}{3} - O(1) \geq \frac{7k}{12} - O(1)= \frac{k}{2} +\Omega(k),$$ as desired. By symmetry, if $d_2 \geq \frac{k}{4}$, then we are also done.

Now, we consider $d_1, d_2 < \frac{k}{4}$. The $\frac{k^2}{2} \pm O(k)$ vertices in region $A_2$ (see \cref{figure: DeleteEdge2_geo}) are all reached by $z_2$, and all but $O(k)$ of them must be reached by another vertex that is not in $B_2$ in order to be distinguished. Additionally, at least $\frac{k^2}{2} - O(k)$ of the vertices in $A_1$ must be reached by vertices not in $B_1$ in order to be distinguished. 
Vertex $z_1$ reaches at most $(c_1+d_1 + O(1))k $ of the vertices in $A_2$, and the total value assigned to the vertices in $B_1$ is at least $B + c_1 + \frac{4}{3}d_1 - O(1)$. 
Similarly, vertex $z_2$ reaches at most $(c_2+d_2 + O(1))k$ of the vertices in $A_1$, and the total value assigned to the vertices in $B_2$ is at least $B+ c_2 + \frac{4}{3}d_2 - O(1)$.
Any vertex $v$ that is not in $B_1$ or $B_2$ has $g(v) = 0$ and reaches at most $k \cdot f(v) + O(1)$ of the vertices in $A_1 \cup A_2$.
Thus, in this case we have $$\bdim(H_k-e) - \bdim(H_k)\geq \frac{1}{k}\paren{\abs{A_1 \cup A_2} - O(k)}= k - O(1) = \frac{k}{2} +\Omega(k).$$

 \begin{figure}[h]
\centering
\includegraphics[scale=1]{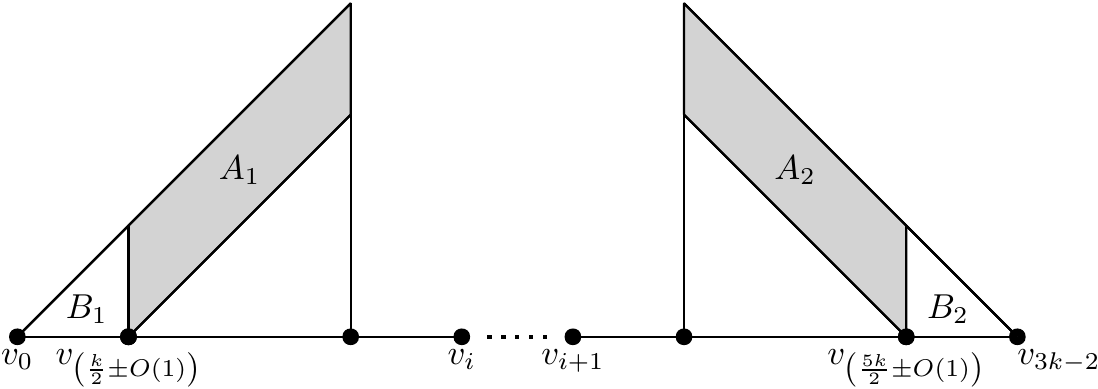}
\caption{A geometric interpretation of graph $H_k - e$. The spiders centered at $v_0$ and $v_{3k-2}$ (not pictured) are also in $B_1$ and $B_2$, respectively.}
\label{figure: DeleteEdge2_geo}
\end{figure}

\noindent\textbf{Case 2.} There exists a vertex $z$ with 
$$f(z) - d_{H_k-e}(v_0, z)\geq 3k-2 \quad \text{and}\quad f(z) - d_{H_k-e}(v_{3k-2}, z)\geq 3k-2.$$
The assumption in this case directly implies that $$f(z) \geq \frac{d_{H_k-e}(v_0,v_{3k-2})}{2} + 3k - O(1) = 4.75k - O(1).$$ Without loss of generality, we assume $p(z) \geq i$.
Let $T_1$ be the $F_{i-1}$ subgraph induced by the vertices $u$ with $p(u)\leq i-1$ that are not on a leg of spider $S_1$. By the same reasoning as in the first case, in order for the vertices on $T_1$ to be distinguished, an additional total value of $ \frac{4}{3} \cdot d(v_0,v_{i-1}) - O(1)$ must be assigned to the vertices $u$ with $p(u) < i$.
Thus, we have 
\begin{align*}
\bdim(H_k-e) - \bdim(H_k)&\geq f(z)-g(z_1')-g(z_2') + \frac{4}{3}\cdot  \frac{3k}{2}   - O(1)\\
&\geq 4.75k - 6k + 2k - O(1)\\
&= \frac{k}{2} +\Omega(k),
\end{align*}
as desired.
\end{proof}

While the value $\bdim(G-e) - \bdim(G)$ can be arbitrarily large, the ratio $\frac{\bdim(G-e)}{\bdim(G)}$ is bounded. We prove this below, using some ideas from the proof that $\dim(G-e)\leq \dim(G)+2$ in \cite{eroh2015effect}. Recall that a \textit{geodesic} is a shortest path between two points.

\begin{reptheorem}{thm: last}
For all graphs $G$ and any edge $e\in E(G)$, we have $\frac{\bdim(G-e)}{\bdim(G)} \leq 3$.
\end{reptheorem}
\begin{proof}
Let $f$ be an efficient resolving broadcast of $G$, and let vertices $u$ and $v$ be the endpoints of edge $e$. Let $b = \max_{v\in V(G)} f(v)$.
We will show that function $f'$, which is identical to $f$, except with $f'(u) = f'(v) = b$, is a resolving broadcast of $G-e$. Then, we will be done since $$3\bdim(G) = 3\sum_{w\in V(G)} f(w) \geq \sum_{w\in V(G-e)}f'(w)\geq \bdim(G-e).$$

%If $d_G(u,z) = d_G(v,z)$ for all $z\in\supp_G(f)$, then we have $d_G(w,z) = d_{G-e}(w,z)$ for every $w\in V(G)$, so $f$ is also a resolving broadcast of graph $G-e$. 
Let $z\in \supp_G(f)$, and let $x$ and $y$ be two vertices
with $d_{f(z)}(x,z) \neq d_{f(z)}(y,z)$ in graph $G$. Suppose that $x$ and $y$ are no longer resolved by $z$ after the edge $e$ is deleted; that is, $d_{f(z)}(x,z) = d_{f(z)}(y,z)$ in graph $G-e$. Then, we must have $d_G(u,z) \neq d_G(v,z)$ since removing edge $e=uv$ increases the distance from $z$ to at least one vertex in the graph.
Without loss of generality, we assume that $d_G(v,z) < d_G(u,z)$.

We consider two cases and show that $u$ resolves $x$ and $y$ in graph $G-e$ in both cases; that is, we show that we have $d_{f'(u)}(x,u) \neq d_{f'(u)}(y,u)$ in graph $G-e$ in both cases.

\noindent \textbf{Case 1.} Removing edge $e$ only increases the distance from $z$ to one of $x$ and $y$ (say $x$). \\
Edge $e$ must lie on every $x-z$ geodesic in $G$. 
%, and we must have $d_G(z,v) \neq d_G(z,u)$
Since $d_G(v,z) < d_G(u,z)$, we have an $x-u$ geodesic in $G$ that does not go through edge $e$. 
Moreover, we have 
\begin{align*}
f'(u) &\geq f(z)\geq \min\left\{d_G(x,z),d_G(y,z)\right\} = d_G(x,z)\geq d_G(x,u) = d_{G-e}(x,u).
\end{align*}
The above inequality shows that $u$ reaches $x$ with respect to $f'$ in graph $G-e$. 
Thus, it remains to be shown that $d_{G-e}(x,u) \neq d_{G-e}(y,u)$ in this case.

\noindent \textit{Subcase 1.} $f(z)\geq d_G(y,z) = d_{G-e}(y,z)$.\\
In this subcase, $d_{f(z)}(x,z) = d_{f(z)}(y,z)$ in graph $G-e$ implies that $d_{G-e}(x,z) = d_{G-e}(y,z)$, and so
\begin{align*}
d_{G-e}(x,u) &= d_G(x,u) = d_G(x,z) - d_G(z,u) < d_{G-e}(x,z) - d_G(z,u)\\
&= d_{G-e}(y,z) - d_G(z,u) = d_{G}(y,z) - d_G(z,u)\leq d_G(y,u) \\
&\leq d_{G-e}(y,u).
\end{align*}
\noindent \textit{Subcase 2.} $f(z) < d_G(y,z) = d_{G-e}(y,z)$.\\
If $d_{G-e}(x,u) = d_{G-e}(y,u)$, then we have
\begin{align*}
d_G(y,z) &\leq d_{G}(y,u) + d_G(u,z) \leq d_{G-e}(y,u) + d_G(u,z)\\
&= d_{G-e}(x,u) + d_G(u,z) = d_{G}(x,u) + d_G(u,z)\\
&= d_G(x,z) \leq f(z),
\end{align*}
a contradiction.

\noindent \textbf{Case 2.} Removing edge $e$ increases the distance from $z$ to both $x$ and $y$.\\
Edge $e$ must lie on every $x-z$ geodesic and every $y-z$ geodesic in graph $G$. Since $d_G(v,z) < d_G(u,z)$, we have $d_G(u,x) < d_G(v,x)$ and $d_G(u,y) < d_G(v,y)$. Because $z$ resolves $x$ and $y$ in $G$, at least one of $x$ and $y$ (say $x$) is reached by $z$ in $G$.
Then, $$f'(u) \geq f(z) \geq d_{G}(x, z) \geq d_G(x,u) = d_{G-e}(x,u)$$ and $$d_{G-e}(x,u) = d_G(x,u)  \neq d_G(y,u)=d_{G-e}(y,u),$$ so vertex $u$ resolves vertices $x$ and $y$ in graph $G-e$.
\end{proof}

\section{Future Work}
\label{Section: Future_Work}

In \cref{corollary: adim_of_acyclic}, we showed that $\adim(G) = \Omega(\sqrt{n})$ for all acyclic graphs $G$ of order $n$. To our knowledge, the best such lower bound before our work is the $\Omega(\log{n})$ bound on the adjacency dimension of general graphs of order $n$ given by Geneson and Yi in \cref{thm: lowerbound_adim_bdim}, which they showed to be asymptotically optimal using a family of graphs constructed by Zubrilina in \cite{zubrilina2018edge}. We ask if our lower bound on the adjacency dimension of acyclic graphs is asymptotically optimal.
\begin{question}
Is there a family of acyclic graphs $\set{G_k}_{k\in \mathds{Z}^+}$ with $\adim(G_k) = \Theta\paren{\sqrt{|V(G_k)|}}$ for every $k\in \mathds{Z}^+$?
\end{question}

The bounds that we derived in \cref{thm: acyclic_lower_bound} and \cref{thm: last} are sharp up to a constant factor. Sharper bounds may be obtained by examining the steps of the proofs more carefully.
Additionally, it would be interesting to determine the exact broadcast dimension of some special graphs for which the broadcast dimension is currently only known up to a constant factor.

\begin{question}
What is the broadcast dimension of the grid graph $P_m \square P_n$?
\end{question}
\begin{question}
What is the broadcast dimension of the graph $F_k$ from \cref{def: F_k}?
\end{question}

We note that the broadcast dimension of the grid graph $P_m \square P_n$ is at most $2m+2n$: for paths $P_m : x_1, x_2,\dots x_m$ and $P_n : y_1, y_2,\dots y_n$, the function $f$ that assigns $m+n$ to $\paren{x_{1}, y_1}$ and $\paren{x_{1}, y_{n}}$ and assigns 0 to the rest of the vertices is a resolving broadcast of $P_m \square P_n$.
Additionally, the broadcast dimension of $F_k$ is at most $3k$: function $f$ with $f(v_0) = 2k$, $f(v_k) = k$, and $f(w)=0$ for all $w\in V(F_k) - \{v_0, v_k\}$ is a resolving broadcast of $F_k$. \cref{lemma: F_k} makes partial progress towards finding the broadcast dimension of $F_k$.

In \cref{Section: Edge_Deletion}, we show that both $\bdim(G-e)-\bdim(G)$ and $\bdim(G) - \bdim(G-e)$ can be arbitrarily large and that $\frac{\bdim(G-e)}{\bdim(G)} \leq 3$ for all graphs $G$ and any edge $e\in E(G)$. These results naturally lead us to ask the following question:

\begin{question}
Is $\frac{\bdim(G)}{\bdim(G-e)}$ bounded from above for all graphs $G$ and any edge $e\in E(G)$?
\end{question}

On a similar note, Geneson and Yi showed in \cite{geneson2020broadcast} that both $\frac{\bdim(G)}{\bdim(G-v)}$ and $\bdim(G-v) - \bdim(G)$ can be arbitrarily large. 
The corresponding problem for $\frac{\bdim(G-v)}{\bdim(G)}$ remains open. 

\begin{question}
Is $\frac{\bdim(G-v)}{\bdim(G)}$ bounded from above for all graphs $G$ and any vertex $v\in V(G)$?
\end{question}

To better understand how metric dimension and broadcast dimension compare to each other, it would be interesting to derive more properties of broadcast dimension that are analogues to known properties of metric dimension. For example:

\begin{question}
For a graph $G$ and $n\in  \mathds{Z}^+$, bound $\bdim(G\square P_n)$ and $\bdim(G\square C_n)$ in terms of some function of $G$ and $n$.
\end{question}

\begin{question}
For graphs $G$ and $H$, bound $\bdim(G\square H)$ in terms of some function of $G$ and $H$.
\end{question}

\begin{question}
Is determining the broadcast dimension of a graph an NP-hard problem? 
\end{question}

It is NP-hard to determine the metric dimension and adjacency dimension of a general graph (see \cite{garey1979computers}, \cite{fernau2018adjacency}, respectively). Determining the domination number of a general graph is also an NP-hard problem \cite{garey1979computers}. Heggernes and Lokshtanov \cite{heggernes2006optimal} found a polynomial-time algorithm for computing the broadcast domination number of arbitrary graphs, and both the domination number and broadcast domination number of a tree can be determined in linear time (see \cite{cockayne1975linear},\cite{dabney2009linear}, respectively). We ask the corresponding question for the broadcast dimension of trees.

\begin{question}
Is there a polynomial-time algorithm for determining the value of $\bdim(T)$ for every tree $T$?
\end{question}

We refer to \cite{geneson2020broadcast} for more open questions about broadcast dimension. Finally, we note that it would also be interesting to study the broadcast dimension of directed graphs and graphs with weighted edges.

\section{Acknowledgments}
This research was conducted at the 2020 University of Minnesota Duluth Research Experience for Undergraduates (REU) program, which is supported by NSF-DMS grant 1949884 and NSA grant H98230-20-1-0009.  I would like to thank Joe Gallian for organizing the program, suggesting the problem, and supervising the research. I would also like to thank Amanda Burcroff, Brice Huang, and Joe Gallian for reading this paper and giving valuable suggestions and Amanda Burcroff for useful discussions over the course of the program.

%\bibliographystyle{plain}
%\bibliography{ref}

\begin{thebibliography}{10}
\bibitem{bailey2011base}
Robert F. Bailey and Peter J. Cameron, Base size, metric dimension and other invariants of groups and graphs. {\em Bulletin of the London Mathematical Society}, \textbf{43(2)} (2011), 209--242.

\bibitem{buczkowski2003k}
Peter Buczkowski, Gary Chartrand, Christopher Poisson, and Ping Zhang, On $k$-dimensional graphs and their bases. {\em Periodica Mathematica Hungarica}, \textbf{46(1)} (2003), 9--15.

\bibitem{caceres2007metric}
Jos{\'e} C{\'a}ceres, Carmen Hernando, Merce Mora, Ignacio M. Pelayo, Maria L. Puertas, Carlos Seara, and David R. Wood, On the metric dimension of Cartesian products of graphs. {\em SIAM Journal on Discrete Mathematics}, \textbf{21(2)} (2007), 423--441.

\bibitem{chappell2008bounds}
Glenn G. Chappell, John Gimbel, and Chris Hartman, Bounds on the metric and partition dimensions of a graph. {\em Ars Combinatoria}, \textbf{88} (2008), 349--366.

\bibitem{chartrand2000resolvability}
Gary Chartrand, Linda Eroh, Mark A. Johnson, and Ortrud R. Oellermann, Resolvability in graphs and the metric dimension of a graph. {\em Discrete Applied Mathematics}, \textbf{105(1--3)} (2000), 99--113.

\bibitem{chartrand2003theory}
Gary Chartrand and Ping Zhang, The theory and applications of resolvability in graphs. {\em Congressus Numerantium}, (2003), 47--68.

\bibitem{chvatal1983mastermind} 
Vasek Chv{\'a}tal, Mastermind. {\em Combinatorica}, \textbf{3(3--4)} (1983), 325--329.

\bibitem{cockayne1975linear}
Ernest J. Cockayne, Sue Goodman, and Stephen Hedetniemi, A linear algorithm for the domination number of a tree. {\em Information Processing Letters}, \textbf{4(2)} (1975), 41--44.

\bibitem{dabney2009linear}
John Dabney, Brian C. Dean, and Stephen T. Hedetniemi, A linear-time algorithm for broadcast domination in a tree. {\em Networks: An International Journal}, \textbf{53(2)} (2009), 160--169.

\bibitem{eroh2015effect}
Linda Eroh, Paul Feit, Cong X. Kang, and Eunjeong Yi, The effect of vertex or edge deletion on the metric dimension of graphs. {\em Journal of Combinatorics}, \textbf{6(4)} (2015), 433--444.

\bibitem{fernau2018adjacency}
Henning Fernau and Juan A. Rodr{\'\i}guez-Vel{\'a}zquez, On the (adjacency) metric dimension of corona and strong product graphs and their local variants: combinatorial and computational results. {\em Discrete Applied Mathematics}, \textbf{236} (2018), 183--202.

\bibitem{garey1979computers}
Michael R. Garey and David S. Johnson, {\em Computers and Intractability}. Freeman, New York, 1979.

\bibitem{geneson2020broadcast}
Jesse Geneson and Eunjeong Yi, Broadcast dimension of graphs. \href{https://arxiv.org/abs/2005.07311}{\tt arXiv:2005.07311 [math.CO]}, 2020.

\bibitem{melter1976metric}
Frank Harary and Robert A. Melter, On the metric dimension of a graph. {\em Ars Combinatoria}, \textbf{2} (1976), 191--195.

\bibitem{heggernes2006optimal}
Pinar Heggernes and Daniel Lokshtanov, Optimal broadcast domination in polynomial time. {\em Discrete Mathematics}, \textbf{306(24)} (2006), 3267--3280.

\bibitem{hernando2010extremal}
Carmen Hernando, Merce Mora, Ignacio M. Pelayo, Carlos Seara, and David R. Wood, Extremal graph theory for metric dimension and diameter. {\em The Electronic Journal of Combinatorics}, \textbf{17(1)} (2010) \#R30.

\bibitem{jannesari2012metric}
Mohsen Jannesari and Behnaz Omoomi, The metric dimension of the lexicographic product of graphs. {\em Discrete Mathematics}, \textbf{312(22)} (2012), 3349--3356.

\bibitem{khuller1996landmarks}
Samir Khuller, Balaji Raghavachari, and Azriel Rosenfeld, Landmarks in graphs. {\em Discrete Applied Mathematics}, \textbf{70(3)} (1996), 217--229.

\bibitem{melter1984metric}
Robert A. Melter and Ioan Tomescu, Metric bases in digital geometry. {\em Computer Vision, Graphics, and Image Processing},
  \textbf{25(1)} (1984), 113--121.

\bibitem{slater1975leaves}
Peter J. Slater, Leaves of trees. {\em Congressus Numerantium}, \textbf{14} (1975),
549--559.

\bibitem{zubrilina2018edge}
Nina Zubrilina, On the edge dimension of a graph. {\em Discrete Mathematics}, \textbf{341(7)} (2018), 2083--2088.
\end{thebibliography}

\textsc{Massachusetts Institute of Technology, Cambridge, MA 02139, USA}

\emph{E-mail address: }\href{mailto:eyzhang@mit.edu}{\tt eyzhang@mit.edu}

\end{document}